\newcommand{\dfn}[1]{\textit{#1}}
\newcommand{\U}{\cal U}
\newcommand{\low}{\cal L}
\newcommand{\E}{E}
\newcommand{\UP}{\ensuremath{U_{\hspace{-0.6mm}P}}}
\newcommand{\anti}[1]{\cal I^{#1}}
\newcommand{\chain}[1]{\cal C^{#1}}
\newcommand{\D}[0]{\cal D}
\newcommand{\Y}[0]{\ensuremath{\mathrm{\mathbf{Y}}}}
\renewcommand{\preceq}{\preccurlyeq}
\DeclareMathOperator{\Ad}{Ad}
\DeclareMathOperator{\supp}{supp}
\DeclareMathOperator{\rel}{rel}
\DeclareMathOperator{\stab}{stab}
\DeclareMathOperator{\ac}{ac}
\DeclareMathOperator{\lb}{lb}
\DeclareMathOperator{\ub}{ub}
\newcounter{sarrow}
\newcommand\xrsquigarrow[1]{%
    \stepcounter{sarrow}%
    \mathrel{\begin{tikzpicture}[baseline= {( $ (current bounding box.south) + (0,-0.5ex) $ )}]
        \node[inner sep=.5ex] (\thesarrow) {$\scriptstyle #1$};
        \path[draw,<-,decorate,
        decoration={zigzag,amplitude=0.7pt,segment length=1.2mm,pre=lineto,pre length=4pt}]
        (\thesarrow.south east) -- (\thesarrow.south west);
    \end{tikzpicture}}%
}
\newcommand\xrrsquigarrow[1]{%
    \stepcounter{sarrow}%
    \mathrel{\begin{tikzpicture}[baseline= {( $ (current bounding box.south) + (0,-0.5ex) $ )}]
        \node[inner sep=.5ex] (\thesarrow) {$\scriptstyle #1$\hspace{.5ex} };
        \path[draw,double,<-,decorate,
        decoration={zigzag,amplitude=0.7pt,segment length=1.2mm,pre=lineto,pre length=4pt}]
        (\thesarrow.south east) -- (\thesarrow.south west);
    \end{tikzpicture}}%
}
\let\wkemb=\rightsquigarrow
\newcommand{\wkembin}[1]{ \xrsquigarrow{#1} }
\newcommand{\embeds}[0]{\xrrsquigarrow{\hspace{.5ex}} }
\newcommand{\embedsin}[1]{ \xrrsquigarrow{#1} }
\def\emp{\nothing}
\def\zz{\mathbb Z}
\def\nn{\mathbb N}
\def\rr{\mathbb R}
\def\fq{{\mathbb F}_q}
\def\si{\sigma}
\def\<{\langle}
\def\>{\rangle}
\def\U{{\mathcal{U} } }
\def\0{{\mathbf 0}}
\def\nothing{\varnothing}
\def\.{\hskip.06cm}
\def\ts{\hskip.03cm}
\def\dz{z}
\def\ra{ {\text {\rm A}  } }
\def\rb{ {\text {\rm B}  } }
\title{On Higman's $k(U_n(q))$ Conjecture}
\author[Igor~Pak]{ \ Igor~Pak$^\star$ \,}
\author[ Andrew Soffer]{ \ Andrew Soffer$^\star$}
\thanks{\thinspace~${\hspace{-.45ex}}^\star $Department of Mathematics,
UCLA, Los Angeles, CA, 90095.
\hskip.06cm
Email:
\hskip.06cm
\texttt{\{pak,\ts\/asoffer\}@math.ucla.edu}}
\date{\today}
\begin{document}

\begin{abstract}
  A classical conjecture by Graham Higman states that the number of conjugacy classes of $U_n(q)$, the group of upper triangular $n\times n$ matrices over~$\fq$, is polynomial in~$q$, for all~$n$.
  In this paper we present both positive and negative evidence, verifying the conjecture for $n\le 16$, and suggesting that it probably fails for $n\ge 59$.

  The tools are both theoretical and computational.
  We introduce a new framework for testing Higman's conjecture, which involves recurrence relations for the number of conjugacy classed of \emph{pattern groups}.
  These relations are proved by the \emph{orbit method} for finite nilpotent groups.
  Other applications are also discussed.
\end{abstract}

\maketitle

\section{Introduction}


Let $k(G)$ denote the number of conjugacy classes of a finite group~$G$, and let $U_n(q)$ be the group of upper triangular $n \times n$ matrices over a finite field $\fq$ with ones on the diagonal.
In~\cite{H1}, Higman made the following celebrated conjecture:

\begin{conj}[Higman, 1960]\label{conj:higman}
  For every positive integer $n$, the number of conjugacy classes in $U_n(q)$ is a polynomial
  function of~$q$.
\end{conj}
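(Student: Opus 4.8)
The plan is to prove the conjecture by the \emph{orbit method} for finite nilpotent groups, which replaces the count of conjugacy classes by a count of coadjoint orbits, and then to attack the latter by a recurrence ranging over the larger family of \emph{pattern groups}. Write $\mathfrak{u}_n(q)$ for the Lie algebra of strictly upper triangular $n\times n$ matrices over $\fq$. For $p=\mathrm{char}\,\fq$ large (say $p\ge n$), the exponential map — a polynomial map since $\mathfrak{u}_n(q)$ is nilpotent — identifies $U_n(q)$ with $\mathfrak{u}_n(q)$ compatibly with conjugation and the adjoint action, so $k(U_n(q))$ equals the number of $U_n(q)$-orbits on the dual $\mathfrak{u}_n(q)^*$ under the coadjoint action. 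The first step is to set this up precisely, and to note that a count of this shape — the number of $\fq$-points of a constructible family of orbit spaces — is governed by a single polynomial for all large $q$ once it is polynomial for infinitely many $q$, the remaining finitely many small $p$ being treated separately as usual.

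Second, rather than working with $U_n$ in isolation I would work with the family of pattern groups $U_P$: for a \emph{pattern} $P\subseteq\{(i,j):1\le i<j\le n\}$ closed under the transitivity relation generated by $(i,j),(j,k)\mapsto(i,k)$, let $U_P\le U_n$ be the subgroup supported on $P$, with Lie algebra $\mathfrak{u}_P$, so that $k(U_P)=|\mathfrak{u}_P^*/U_P|$. The purpose of passing to this family is that, unlike $\{U_n\}$, it is built to be stable under the peeling operation below, which makes an induction on the number of positions possible.

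Third, fix a position $\alpha\in P$ extremal for the transitivity order, so that $P\setminus\{\alpha\}$ is again a pattern. Stratify the coadjoint orbits in $\mathfrak{u}_P^*$ by the value a representative functional takes on the root vector $e_\alpha$. On the stratum where this value is $0$ one gets, with a multiplicity equal to a power of $q$ (the size of the normal fiber), the orbit count of a smaller pattern group; on the stratum where it is nonzero one normalizes this coordinate using the one-parameter subgroup through $\alpha$, and the remaining orbit problem is carried by the centralizer of the normalized functional acting on a complementary space. Assembling the two strata yields a recurrence
\[
k(U_P)\;=\;\sum_i q^{a_i}\,k(U_{P_i})\;-\;\sum_j q^{b_j}\,k(U_{P'_j}),
\]
with every $P_i,P'_j$ a pattern on strictly fewer positions and $a_i,b_j\in\zz_{\ge 0}$ determined by the combinatorics of $(P,\alpha)$. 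Iterating down to the empty pattern, where $k=1$, expresses $k(U_n(q))$ as a $\zz$-linear combination of powers of $q$, which is Conjecture~\ref{conj:higman}.

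The main obstacle is the nonzero-$e_\alpha$ stratum in the third step. After normalization the relevant group is the centralizer of the normalized functional, and this centralizer is supported not on a sub-pattern of $P$ but on a \emph{deformed} configuration whose structure constants have been twisted by the normalization; it need not be a pattern group at all. The crux of the whole argument is to show that the orbit count of such a twisted nilpotent group can still be rewritten — by a further nilpotency-driven change of variables — as a $\zz[q]$-combination of orbit counts of genuine, smaller pattern groups, so that the recurrence closes within $\{U_P\}$ and the induction on $|P|$ terminates. If this stability holds as stated, the conjecture follows; pinning it down, and controlling how the auxiliary family of groups grows, is exactly where the difficulty — and the delicate dependence on $n$ — resides.
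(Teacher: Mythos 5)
There is a genuine gap, and it is fatal. The statement you are trying to prove is an \emph{open conjecture}: the paper does not prove it, it only verifies it computationally for $n\le 16$ and in fact assembles evidence (Theorem~\ref{thm:hp-new}, Theorem~\ref{thm:u59}, Conjecture~\ref{conj:false-59}) that it is probably \emph{false} for $n\ge 59$. Your outline uses essentially the paper's framework --- the orbit method to replace conjugacy classes by coadjoint orbits, and a recurrence obtained by peeling one position/stratifying by the value of the functional on a root vector, ranging over the family of pattern groups --- but the conclusion you draw from it cannot be correct. The decisive obstruction is the Halasi--P\'alfy theorem (cited as~\cite{HP} in the paper, and made explicit in Theorem~\ref{thm:hp-new} via a $13$-element poset $P_\diamond$): there exist pattern subgroups $\UP(q)$ for which $k(\UP(q))$ is \emph{not} a polynomial in $q$. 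Any induction of the shape you propose, if it closed within the pattern-group family with $\zz[q]$ coefficients, would prove polynomiality of $k(\UP(q))$ for \emph{every} pattern group, contradicting that theorem. So the step you yourself flag as the crux --- that the orbit count on the nonzero stratum, governed by the twisted centralizer, can always be rewritten as a $\zz[q]$-combination of orbit counts of genuine smaller pattern groups --- is not merely delicate; it is false in general.

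The paper's own treatment makes this precise: the stratification you describe is exactly its identity~\eqref{eqn:main_tool}, but a poset system $(P,m,A)$ is shown to be ``reducible'' to a smaller poset only under additional hypotheses (Lemmas~\ref{lem:remove_max}, \ref{lem:normal_conj}, \ref{lem:antichain_chain}; \Y-free or interval posets in Theorems~\ref{thm:y_free} and following), and the paper explicitly remarks that if every poset system were reducible then $k(P)$ would be polynomial for all posets, which~\cite{HP} rules out. Worse for your strategy, the embedding results (Theorem~\ref{thm:chain_univ}, Proposition~\ref{prop:more_efficient}) show that the non-polynomial pattern group $U_{P_\diamond}$ occurs as a term in precisely the kind of recursive expansion of $k(U_{59}(q))$ you want to run, which is why the authors conjecture Higman's conjecture fails from $n=59$ on. Separately, your opening reduction via the exponential map requires $p\ge n$ and only controls large $q$, whereas the conjecture is a statement for all prime powers; the paper instead proves the orbit-count identity $k(\UP(q))=\bigl|\U_P^*(q)/\UP(q)\bigr|$ uniformly by a Burnside-lemma argument (Lemma~\ref{lem:adcoad}), with no restriction on the characteristic. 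But the central point is that no amount of care at that step can rescue the induction: the family you induct over already contains counterexamples to the polynomiality you are trying to propagate.
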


Higman was motivated by the problem of enumerating $p$-groups.
Since then, much effort has been made to verify and establish the result.
Notably, Arregi and Vera-L\'opez verified Higman's conjecture for $n\le 13$ in~\cite{VA3} (see below and \S\ref{fin_rems_va}).
More recently, John Thompson~\cite{Tho} laid some ground towards a positive resolution of the conjecture, but the proof remains elusive.

\smallskip

In this paper we make a new push towards resolving the conjecture,
presenting both positive and negative evidence.
Perhaps surprisingly, results of both type are united by the same
underlying idea of embedding smaller pattern groups into larger ones (see below).

\begin{thm}\label{thm:n16}
  Higman's conjecture holds for all \ts $n\le 16$.  Moreover, for all \ts $n \le 16$, we have
  $k(U_n) \in \nn[q-1]$.
\end{thm}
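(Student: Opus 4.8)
The plan is to prove Theorem~\ref{thm:n16} by exhibiting, for each fixed \ts $n\le 16$, an explicit polynomial in $q-1$ with nonnegative integer coefficients that computes $k(U_n(q))$, and then certifying that polynomial rigorously. The starting point is the orbit method for finite nilpotent groups: since $U_n(q)$ is a $p$-group for $q=p^e$ (and more generally its Lie-algebra $\mathfrak u_n(\fq)$ is nilpotent), coadjoint orbits of $U_n(q)$ acting on $\mathfrak u_n(\fq)^\ast$ are in bijection with conjugacy classes, so $k(U_n(q))$ equals the number of such coadjoint orbits. More useful is the pattern-group generalization: for any pattern $P$ (a poset-shaped set of positions closed upward), $U_P(q)$ is again a pattern group and $k(U_P(q))$ counts $U_P$-orbits on $\mathfrak u_P^\ast$. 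The first step is therefore to set up the recurrence relations promised in the abstract, expressing $k(U_P)$ for a pattern $P$ in terms of $k(U_{P'})$ for patterns $P'$ obtained by deleting or splitting a carefully chosen position (a "corner" or a spanning root), via a case analysis on whether the corresponding coordinate functional is zero or not on a given orbit. These relations, applied from the top position downward, reduce $U_n$ to a finite tree of strictly smaller pattern groups, each of whose orbit-count is eventually a manifestly polynomial (in fact $\nn[q-1]$) base case.

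Second, I would organize the bookkeeping so that the recursion terminates with a computer-checkable certificate. Each node in the recursion tree is a pattern $P$ together with a partial linear system over $\fq$ recording which functionals have been fixed to be zero and which to be nonzero; the branching multiplies by $(q-1)$ when a functional is forced nonzero (choice of a nonzero scalar, up to the residual torus action that is tracked separately) and splits otherwise. Carrying the field parameter symbolically as the single indeterminate $q-1$, and only ever performing operations (addition, multiplication by $q-1$, multiplication by integers) that preserve $\nn[q-1]$, guarantees that the output at the root is in $\nn[q-1]$ \emph{provided the recursion closes}. The key structural input here is the embedding idea highlighted in the introduction: embedding smaller pattern groups into larger ones lets one re-use previously computed orbit generating functions as black boxes, keeping the tree from exploding and making the $n\le 16$ computation feasible.

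The main obstacle I anticipate is twofold. First, at the level of the theory, one must verify that the recurrence relations are \emph{exact} — that the case split on "functional zero vs.\ nonzero" genuinely partitions orbits and that the residual group action on the remaining coordinates is again (conjugate to) a pattern-group action, so that the recursion stays within the class where the orbit method applies; degenerate configurations where deleting a position changes the poset structure nontrivially, or where the residual action is not triangularizable in the required form, are exactly where a naive argument breaks down and where Vera-L\'opez–Arregi-type subtleties enter. Second, at the computational level, the recursion tree for $n$ near $16$ is large, and the nontrivial content of the theorem is that it \emph{does} terminate with every leaf landing in $\nn[q-1]$; a priori the orbit count could fail to be polynomial (indeed the paper suggests this happens for $n\ge 59$), so the proof is genuinely a finite verification that no non-polynomial branch appears for $n\le 16$. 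I would therefore present the theoretical reduction in full and then appeal to the certified computer calculation — with the recurrence relations as the trusted primitives — to discharge the finitely many cases.
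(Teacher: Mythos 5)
Your proposal follows essentially the same route as the paper: reduce $k(U_n(q))$ to counting co-adjoint orbits (Lemma~\ref{lem:adcoad}), stratify orbits by the value of the functionals in the row of a maximal element to get the recursion~\eqref{eqn:main_tool} with its $(q-1)^{|A|}$ multiplicities over pattern groups, reduce poset systems to smaller posets where possible, and discharge the finitely many non-reducible cases by a certified computation (the paper's fallback being a modified Vera-L\'opez--Arregi algorithm). The only slight mismatch is your invocation of the embedding machinery, which the paper uses for the negative results rather than for the $n\le 16$ verification, but this does not affect the correctness of the plan.
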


This extends the results of Arregi and Vera-L\'opez and earlier computational results
in favor of Higman's conjecture.  Our approach is based on computing the polynomials
indirectly via a recursion over certain co-adjoint orbits arising in the finite
field analogue of Kirillov's \emph{orbit method} (see~\cite{K2,K3}).
This approach is substantially different and turns out to be significantly more efficient than the previous
work which is based on direct enumeration of the conjugacy classes.
We present the algorithm proving Theorem~\ref{thm:n16} in Section~\ref{sec:experimental}
and describe the earlier work in Section~\ref{sec:fin-rems}.

Our approach is based on a recursion over a large class of pattern groups. A \emph{pattern group} is
a subgroup of $U_n(q)$ where some matrix entries are fixed to be zeroes.
In a recent paper~\cite{HP}, Halasi and P\'alfy showed that the analogue of
Higman's conjecture fails for certain pattern groups. In fact, they show that
$k(\UP(q))$ can be as bad as one desires, e.g.~non-polynomial even when the characteristic of $\fq$ is fixed.
This work was the starting point of our investigation. Our next two result are also computational.

\begin{thm}\label{thm:hp-small}
  For every pattern subgroup $\UP(q) \leqslant U_{9}(q)$, we have $k(\UP(q))\in\nn[q-1]$.
\end{thm}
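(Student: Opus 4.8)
The plan is to prove the statement by a single exhaustive computation, running the co-adjoint orbit recursion of Section~\ref{sec:experimental} — the same engine behind Theorem~\ref{thm:n16} — over every pattern subgroup of $U_9(q)$ and checking the resulting polynomials. Each pattern subgroup is determined by a pattern $P$, i.e.\ a subset of $\{(i,j):1\le i<j\le 9\}$ closed under the implication $(i,j),(j,k)\in P\Rightarrow(i,k)\in P$; equivalently, $P$ ranges over the partial orders on $[9]$ admitting $1<2<\cdots<9$ as a linear extension. There are only finitely many such $P$ — the number grows quickly in $n$ but is well within reach for $n=9$ — and since the recursion reduces each $P$ to strictly smaller pattern groups and the same small patterns recur constantly, one runs it bottom-up, memoizing the intermediate counts.

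For a fixed $P$ I would compute $k(\UP(q))$ as follows. The bijection $g\mapsto g-I$ identifies the conjugacy classes of $\UP(q)$ with the orbits of $\UP(q)$, acting by conjugation, on its additive Lie algebra $\mathfrak u_P(q)$; by Brauer's permutation lemma these orbits are equinumerous with the orbits on the dual group $\mathfrak u_P(q)^{\ast}$, i.e.\ with the co-adjoint orbits. Both identifications are valid in every characteristic, so no hypothesis on $q$ is needed. The recursion then peels off an extremal generator of $\mathfrak u_P$, splits the co-adjoint functionals by their value on it, and on the non-vanishing part uses a polarization to pass to a strictly smaller pattern group, thereby expressing the co-adjoint orbit count for $P$ through those of smaller patterns. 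Carrying this through yields, for each $P$, an explicit $f_P$, and the theorem is the conjunction of two assertions: that every $f_P$ lies in $\zz[q]$, and that its expansion in powers of $q-1$ has only non-negative coefficients. Neither is automatic — polynomiality already fails for pattern groups inside sufficiently large $U_N$ by Halasi--P\'alfy, so the computation establishes it for each $P\le U_9$ along the way — and, granting polynomiality, integrality of the $(q-1)$-coefficients follows from $f_P(q)=k(\UP(q))\in\zz$ at every prime power, so the genuine content is positivity.

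Two reductions make the sweep practical. If $P$ splits into blocks $P_1\sqcup P_2$ on disjoint index sets then $\UP\cong U_{P_1}\times U_{P_2}$, hence $f_P=f_{P_1}f_{P_2}$, and since $\nn[q-1]$ is closed under multiplication it suffices to treat indecomposable patterns; the anti-diagonal involution $i\mapsto 10-i$ of $U_9$ identifies each $P$ with its reverse, roughly halving what remains. The main obstacle is then one of scale rather than of principle: the largest indecomposable patterns in $U_9$ produce recursions with many branches, and the reason an exhaustive pass is feasible at all — unlike the direct enumeration of conjugacy classes used in earlier work — is precisely that the co-adjoint orbit recursion is far cheaper per pattern. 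Once all the $f_P$ are in hand, one verifies non-negativity of every $(q-1)$-expansion uniformly across the list, which completes the proof.
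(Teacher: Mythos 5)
Your overall strategy---an exhaustive sweep over the isomorphism classes of posets on at most $9$ elements, reduction to connected posets via $k(P\amalg Q)=k(P)\cdot k(Q)$ and to one member of each dual pair via $k(P)=k(P^*)$, and a co-adjoint orbit recursion for each remaining poset---is exactly the paper's proof. The gap is in the engine. You assert that the recursion ``peels off an extremal generator, splits the co-adjoint functionals by their value on it, and on the non-vanishing part uses a polarization to pass to a strictly smaller pattern group.'' If such a reduction were always available, induction on the size of the poset would give $k(\UP(q))\in\zz[q]$ for \emph{every} poset, contradicting Halasi--P\'alfy~\cite{HP}; the paper makes precisely this observation when introducing poset systems. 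What the stratification~\eqref{eqn:main_tool} actually produces is a family of poset systems $(P,m,A)$, and only some of these are reducible to genuine posets via Lemmas~\ref{lem:remove_max}, \ref{lem:normal_conj} and~\ref{lem:apply_d}. The paper's verification for posets on $8$ and $9$ elements already hits irreducible poset systems and must fall back on a direct branching enumeration of canonical orbit representatives (the VLA-algorithm of Section~\ref{sec:experimental}, adapted from Vera-L\'opez--Arregi with the bottom row seeded by $1_A$). Your proposal has no such fallback, so as described the computation would stall on exactly the cases that carry the content of the theorem.

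A smaller point: integrality of the coefficients in the $(q-1)$-expansion does not follow from $f_P(q)\in\zz$ at every prime power---$q(q-1)/2$ is integer-valued there but has coefficients $\tfrac12,\tfrac12$ in $t=q-1$---so you cannot dismiss integrality as automatic. Both integrality and positivity must be read off from the explicitly computed polynomials, which is what the paper does.
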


While this shows that small pattern groups do exhibit polynomial behavior,
this is is false for larger~$n$.

\begin{thm}\label{thm:hp-new}
  There is a pattern subgroup $\UP(q) \leqslant U_{13}(q)$ such that $k(\UP(q))$ is \emph{not} a polynomial function of~$q$.
\end{thm}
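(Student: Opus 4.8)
The plan is to exhibit one explicit pattern $P$ on the index set $\{1,\dots,13\}$ and to compute $k(\UP(q))$ symbolically by the methods of the previous sections, arriving at a closed expression in which a non‑polynomial term is visible and provably uncancelled. Recall that $P$ determines a nilpotent $\fq$-Lie algebra $\mathfrak{g}_P=\bigoplus_{(i,j)\in P}\fq\,E_{ij}$, and, for $p$ large enough for the orbit method to apply, the conjugacy classes of $\UP(q)$ biject with the co‑adjoint $\UP(q)$-orbits on $\mathfrak{g}_P^*\cong\fq^{|P|}$. (Restricting to $q=p^m$ with $p$ large loses nothing: a polynomial in $q$ is determined by, and equal to, its restriction to such $q$, so refuting polynomiality there refutes it outright.) The recurrences over pattern groups established above then rewrite $k(\UP(q))$ as a $\zz[q]$-linear combination $\sum_t c_t(q)\,\#X_t(\fq)$, where each $X_t$ is an explicit affine $\zz$-scheme cut out by the structure constants of $P$, all but one of the $\#X_t(\fq)$ being monomials in $q$; the pattern $P$ is engineered so that the exceptional $X_{\ast}$, occurring with a coefficient $c_{\ast}(q)\in\zz[q]\setminus\{0\}$, has non‑polynomial point count.

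Concretely, I would run the recursion on the chosen $P$ exactly as in Section~\ref{sec:experimental}: at each stage a generator $E_{ij}$ whose image is central in the current quotient contributes a clean factor of $q$, or else splits the orbit count into $q-1$ generic fibres plus a degenerate locus, and the pattern shrinks. Iterating until no central generator remains isolates a \emph{core stratum} $X_{\ast}$, defined over $\zz$ by the bracket relations $[E_a,E_b]=\sum_c \gamma^c_{ab}E_c$ that survive the elimination. The whole point of the construction is that $X_{\ast}$, after clearing the linear variables, is governed by a smooth plane curve — designed to be an elliptic curve, e.g. $y^2=x^3-x$ (legitimate once $\mathrm{char}\,\fq\neq 2,3$).

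For that curve $\#X_{\ast}(\fq)=q+1-a_q$, where the trace $a_q$ vanishes for every $q\equiv 3\pmod 4$ but is nonzero for infinitely many $q\equiv 1\pmod 4$ (there determined by the representation $q=a^2+b^2$), and this dichotomy persists when the characteristic $p$ is held fixed. Hence the function $q\mapsto a_q$ is linearly independent from $\qqq[q]$. Substituting back, the recursion yields $k(\UP(q))=f(q)+g(q)\,a_q$ with $f,g\in\qqq[q]$ and $g\neq 0$, so $k(\UP(q))\notin\qqq[q]$, which is Theorem~\ref{thm:hp-new}; the same computation shows the failure is not cured by fixing $p$.

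The main obstacle is twofold. First, finding a pattern small enough — both $n=13$ and $|P|$ within reach of exact symbolic computation — is the computational heart of the result: the natural route is to take a Halasi–Pálfy pattern group whose class count is controlled by an elliptic curve and to \emph{compress} it (merge indices, delete inessential positions) while re‑verifying with the algorithm of Section~\ref{sec:experimental} that the elliptic contribution survives; there is no a priori reason such a compression reaches $n=13$, and establishing that it does is the crux. Second — the genuinely mathematical subtlety — one must rule out cancellation: $a_q$ may enter through several strata with different polynomial coefficients, and one has to show these do not telescope into something polynomial. This is handled either by producing the exact closed form $k(\UP(q))=f(q)+g(q)\,a_q$ and invoking the independence of $a_q$ over $\qqq[q]$, or, absent a clean closed form, by evaluating $k(\UP(q))$ at more prime powers than the a priori degree bound permits and checking no polynomial interpolates the values.
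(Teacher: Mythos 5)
Your overall strategy --- exhibit an explicit pattern on $13$ points, reduce $k(\UP(q))$ by the recursion to a $\zz[q]$-combination of point counts of affine varieties, and show the one non-polynomial point count does not cancel --- is the right shape, and it matches the paper's in outline. But the proposal has a genuine gap at exactly the step you yourself flag as ``the crux'': you never produce the poset, and the variety you choose to encode makes producing it implausible. You aim for an elliptic curve such as $y^2=x^3-x$, but encoding a cubic relation in a Halasi--P\'alfy-type pattern costs many more poset elements than encoding a quadric; the generic Halasi--P\'alfy construction already yields posets with millions of elements, and there is no route sketched (or likely to exist) that compresses an elliptic-curve gadget into $U_{13}$. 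The paper instead uses the \emph{minimal} non-polynomial variety, $x^2=1$, whose point count is $\delta(q)=2$ for $q$ odd and $1$ for $q$ even. This is what makes a $13$-element poset $P_\diamond$ reachable: the paper exhibits $P_\diamond$ explicitly (Figure~\ref{fig:non_poly_poset}), computes $k(P_\diamond)$ in closed form via Lemma~3.1 of~\cite{HP} as an explicit element of $\nn[q-1]$ plus the term $\delta(q)\cdot t^{12}(t+2)^6$, and the parity-dependence of $\delta(q)$ settles non-polynomiality with no cancellation analysis needed. Note that for the theorem as stated you do not need non-polynomiality in fixed characteristic (which the paper's $x^2=1$ example does \emph{not} provide, since $\delta$ is constant once $p$ is fixed); insisting on an elliptic curve buys you that stronger conclusion at the cost of making $n=13$ unattainable.

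Two smaller points. First, the hedge ``for $p$ large enough for the orbit method to apply'' is unnecessary here: the correspondence used is only the numerical identity between conjugacy classes and co-adjoint orbits (Lemma~\ref{lem:adcoad}), proved by Burnside's lemma for every pattern group over every $\fq$, with no restriction on the characteristic. Second, your fallback of ``evaluating at more prime powers than the degree bound permits'' requires an a priori degree bound on a hypothetical interpolating polynomial; the paper sidesteps this entirely by obtaining an exact closed form in which the non-polynomial summand is isolated.
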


Note that while Halasi and P\'alfy's approach is constructive,
they do not give an explicit bound on the size of such a pattern group
(c.f.~\S\ref{ssec:fin-rems-posets}).  We believe that the constant 13 in
Theorem~\ref{thm:hp-new} is optimal, but this computation remains out of
reach in part due to the excessively large number of pattern groups to
consider (see Section~\ref{sec:experimental}).

Our final result offers an evidence against Higman's conjecture:

\begin{thm}\label{thm:u59}
  The pattern subgroup $\UP(q)$ from Theorem~\ref{thm:hp-new} \emph{embeds} into $U_{59}(q)$.
\end{thm}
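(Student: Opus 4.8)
The plan is to realize the pattern group $\UP(q) \leqslant U_{13}(q)$ from Theorem~\ref{thm:hp-new} as an honest \emph{subquotient-free} pattern subgroup of $U_{59}(q)$ in the sense of the embedding relation $\embeds$ introduced earlier, so that the non-polynomiality of $k(\UP)$ is transported upward. First I would recall the combinatorial datum defining $\UP$: a poset $P$ on $13$ points, equivalently a subset of the strict upper-triangular positions of a $13\times 13$ matrix closed under the relevant order, whose associated pattern group has $k(\UP(q))$ non-polynomial in $q$. The embedding $\embeds$ into $U_n(q)$ is, by construction, a statement about embedding the poset $P$ into the poset $\chain{13}$ (the total order on $n$ points) in a way that is compatible with the pattern-group structure and the recursion over co-adjoint orbits; concretely one must exhibit an order-embedding $\iota\colon P \hookrightarrow [n]$ together with the auxiliary ``padding'' positions that make the image a pattern subgroup of $U_n(q)$ whose class number agrees with (or is controlled by) $k(\UP(q))$.

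The key steps, in order, are: (1) extract the explicit $13$-element poset $P$ underlying Theorem~\ref{thm:hp-new}, including the precise incidence data; (2) invoke the general embedding machinery established earlier in the paper — the recurrence relations for $k$ of pattern groups proved via the orbit method — which reduces ``$\UP$ embeds into $U_n(q)$'' to a purely order-theoretic condition on fitting $P$ inside the chain $[n]$ with enough slack on each rung; (3) produce an explicit such fitting with $n=59$, i.e. assign to each of the $13$ elements of $P$ an integer in $\{1,\dots,59\}$ and verify that all the required covering/non-covering relations and the pattern-closure conditions hold; (4) conclude, via Theorem~\ref{thm:hp-new} and the transport-of-non-polynomiality lemma implicit in $\embeds$, that $k(U_{59}(q))$ inherits a non-polynomial summand, which is exactly the asserted evidence against Higman's conjecture. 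Step (3) is essentially a finite check once the right notion of embedding is in hand.

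The main obstacle is step (2)–(3): getting the constant down to $59$. Naively, embedding a pattern poset on $13$ points into a chain so that the pattern-group class count is preserved could cost far more than $59$ points, because each order relation of $P$ that is not already a chain relation must be ``routed'' through fresh intermediate indices, and these routings can collide. The hard part will be choosing the embedding $\iota$ and the padding so that distinct relations of $P$ use \emph{disjoint} sets of intermediate rows/columns while simultaneously not creating any spurious relations that would enlarge the group beyond a controlled pattern subquotient — this is a simultaneous packing-and-separation problem on the $59$ available indices. I expect one resolves it by a careful layered construction: group the relations of $P$ by ``height,'' allocate a block of consecutive indices to each layer, and within a block use a greedy interval-scheduling argument to place the routings without overlap; verifying that $13 + (\text{total routing cost}) \le 59$ and that the orbit-method recursion still factors correctly is where the real work lies.
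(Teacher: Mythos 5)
There is a genuine gap: you have misidentified what ``embeds'' means in this paper, and so the whole strategy is aimed at the wrong target. The relation $P\wkemb Q$ (Definition~\ref{defn:embed}) is not about realizing $\UP$ as a subgroup of $U_{59}(q)$, nor about an order-embedding of the poset $P_\diamond$ into the chain with padding indices. (Indeed, $\UP(q)\leqslant U_{13}(q)\leqslant U_{59}(q)$ as pattern subgroups already, trivially, and this says nothing useful; conversely, a genuine order-embedding of a non-chain poset into a chain is impossible.) Rather, $P\wkemb Q$ means there is a chain of poset systems $S_i=(P_i,m_i,A_i)$ with $P_0=P$, $P_n=Q$, and $k(P_i)=k(S_{i+1})$ for each $i$ --- i.e.\ $k(P_\diamond)$ must literally appear as a summand when $k(\chain{59})$ is expanded by the recursion~\eqref{eqn:main_tool} together with the reduction lemmas (Lemmas~\ref{lem:remove_max}, \ref{lem:normal_conj}, \ref{lem:apply_d}). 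The construction therefore runs \emph{backwards}: starting from $P_\diamond$ one builds successively larger posets, each of which reduces to the previous one, terminating at $\chain{59}$. Your ``packing-and-separation / interval-scheduling of routings'' picture has no counterpart in this framework and would not establish the claim even if carried out.

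Concretely, the paper's proof (Proposition~\ref{prop:more_efficient}) proceeds as follows: using the mechanism of Lemma~\ref{lem:max_el}, attach each maximal element of $P_\diamond$ to all non-maximal elements; dualize (using $k(P)=k(P^*)$, which is why the weak notion of embedding is needed) and repeat for the former minimal elements. This produces the intermediate poset $P'=(\chain3\amalg\chain3\amalg\chain3)+\anti6+(\chain4\amalg\chain4\amalg\chain4\amalg\chain4)$. Then Lemma~\ref{lem:two_chains} ($\chain a\amalg\chain b\embeds\chain{2a+b}$) collapses the disjoint unions of chains, $\anti6\wkembin{5}\chain{11}$ handles the antichain, and Lemma~\ref{lem:add_top} assembles the pieces, giving $P_\diamond\wkemb\chain{28}+\anti6+\chain{15}\wkemb\chain{59}$. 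Note also that the generic bound of Theorem~\ref{thm:chain_univ} only gives $P_\diamond\embeds\chain{97}$; reaching $59$ requires exploiting duality and the specific shape of $P_\diamond$, not a scheduling argument. Finally, your step~(4) overreaches: the theorem asserts only the embedding; the non-polynomiality of $k(U_{59})$ remains Conjecture~\ref{conj:false-59}, precisely because other summands could in principle cancel the non-polynomial contribution.
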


Here the notion of \emph{embedding} is somewhat technical and iterative.
In Section~\ref{sec:embedding}, we prove that
\[k\left(U_{n}(q)\right) = \sum_{P} F_{P,n}(q) \cdot k\left(\UP(q)\right),\]
where $F_{P,n}(q) \in \zz[q]$ are polynomials and the sum is over pattern subgroups
$\UP(q)$ which embed into $U_{n}(q)$, and are irreducible in a certain formal sense.
Taking Theorem~\ref{thm:hp-new} into account, this strongly suggests
that $k\left(U_{n}(q)\right)$ is not polynomial for sufficiently large~$n$.

\begin{conj}\label{conj:false-59}
  The number of conjugacy classes $k\left(U_{n}(q)\right)$ is \emph{not} polynomial for $n \ge 59$.
\end{conj}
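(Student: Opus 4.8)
The plan is to produce an explicit witness for the embedding relation introduced in Section~\ref{sec:embedding}. That relation is defined iteratively: one starts from a short list of elementary moves — each inserting a bounded number of new indices into the ground set of a pattern while keeping the co-adjoint orbit recursion of the enlarged pattern compatible with that of the original — and then closes under composition. Since $\embeds$ is transitive, it is enough to produce a finite chain
\[
\UP \;=\; U_{P_0}\;\embeds\; U_{P_1}\;\embeds\;\cdots\;\embeds\; U_{P_m}
\]
of elementary moves whose terminal pattern $P_m$ is a sub-pattern of $U_{59}(q)$; composing the moves then yields $\UP\embeds U_{59}(q)$. So the theorem reduces to a construction together with a step-by-step verification that each move is legal.

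For the construction I would start from the $13$-index poset $P$ underlying the non-polynomial pattern group of Theorem~\ref{thm:hp-new}. As with the Halasi--P\'alfy examples, its non-polynomial behavior is concentrated in a small core: a handful of covering relations and forced-zero entries whose simultaneous imposition is exactly what pushes $k(\UP(q))$ off any polynomial. Each elementary move replaces one such constraint by a short ``wire'' — a chain of a constant number of fresh indices that routes the constraint through the ambient upper-triangular group — and the iteration strings these wires together until the running pattern has been widened to sit inside $U_{59}(q)$. The quantitative heart of the argument is then the bookkeeping: count the constraints of $P$ that must be wired, multiply by the per-wire cost, add back the original $13$ indices, and — crucially — arrange the layout so that wires belonging to overlapping constraints share auxiliary indices, so that the total lands on $59$.

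To close I would check, move by move, (i) that the inserted indices create no unintended comparabilities in the running poset, so each move is a genuine instance of an elementary embedding; (ii) that the polynomial factor $F$ attached to the move in the decomposition $k(U_n(q))=\sum_P F_{P,n}(q)\,k(\UP(q))$ is nonzero, so the copy of $k(\UP(q))$ is not annihilated; and (iii) that the core gadget responsible for the non-polynomiality is transported intact at every stage rather than being collapsed or short-circuited. Transitivity of $\embeds$ then gives $\UP\embeds U_{59}(q)$.

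The main obstacle is the sharpness of the count. A naive, independent wiring of each constraint of $P$ overshoots $59$ badly, so the real content is a tight, hand-optimized routing in which several constraints are threaded through shared auxiliary indices simultaneously, all without merging any two of them and without disturbing the core gadget. Verifying that this optimized embedding is correct — in particular that no two routed constraints interfere, and that conditions (i)--(iii) persist through all $m$ steps — is where essentially all of the (combinatorial, case-by-case) work lies; a secondary, more routine difficulty is simply keeping track of the relabelings so that the final pattern is literally a sub-pattern of $U_{59}(q)$.
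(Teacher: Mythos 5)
The statement you are trying to prove is stated in the paper as a \emph{conjecture}, not a theorem, and the paper offers no proof of it --- only a heuristic (Remark~\ref{rmk:heuristic}). What your construction actually establishes (if carried out correctly) is Theorem~\ref{thm:u59}: the non-polynomial pattern group $\UP[P_\diamond]$ embeds into $U_{59}(q)$, i.e.\ $k(P_\diamond)$ appears as one summand, with a nonzero polynomial coefficient, in the recursive expansion
\[
k\left(U_{n}(q)\right) \;=\; \sum_{P} F_{P,n}(q)\cdot k\left(\UP(q)\right).
\]
This is indeed how the paper proceeds (Proposition~\ref{prop:more_efficient} gives the hand-optimized routing you describe, via dualization, Lemma~\ref{lem:two_chains}, and Lemma~\ref{lem:add_top}).

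The genuine gap is the final inference. Knowing that one summand $F_{P_\diamond,n}(q)\cdot k(P_\diamond)$ is non-polynomial and has nonzero coefficient does \emph{not} imply that the total $k(U_n(q))$ is non-polynomial: the non-polynomial parts of distinct summands can cancel, and nothing in your conditions (i)--(iii) rules this out. Your condition (ii) only guarantees the copy of $k(P_\diamond)$ is not annihilated by its own coefficient; it says nothing about the other (very many) terms in the sum, some of which may themselves be non-polynomial. The paper is explicit that this is exactly the unresolved issue --- it quotes the analogous situation for Higman's PORC conjecture, where du~Sautoy and Vaughan-Lee found a non-PORC summand yet ``the grand total might still be PORC.'' To turn your argument into a proof you would need to control the non-polynomial contributions of \emph{all} summands simultaneously (or exhibit some positivity/independence preventing cancellation), and no such tool is available; this is precisely why the statement remains a conjecture.
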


Of course, this conjecture is hopelessly beyond the means of a computer experiment.
We believe that Theorem~\ref{thm:n16} can in principle be extended to $n\le 18$
by building upon our approach, and parallelizing the computation
(see~\S\ref{ssec:fin-rems-parallel}).  It is unlikely however, that this
would lead to a disproof of Higman's Conjecture~\ref{conj:higman}
without a new approach.

Curiously, this brings the status of Higman's conjecture in line with that of Higman's related but more famous \emph{PORC conjecture} (see~\cite{BNV}).  It was stated in 1960, also by Graham Higman, in a followup paper~\cite{H2}.
Higman conjectured that the number $f(p^n)$ of $p$-groups of order $p^n$
is a polynomial on each fixed residue class modulo some~$m$. Higman showed
that the number $f(p^n)$ can also be expressed as a large sum over certain
\emph{descendants}.
%
%
Recently, Vaughan-Lee and du Sautoy showed recently
that some of the terms counting the numbers of descendants is non-polynomial~\cite{DVL}.
Here is how Vaughan-Lee eloquently explains this in~\cite{VL}:

\smallskip
\begin{quotation}
  {
    ``The grand total might still be PORC, even though we know that one of the individual
    summands is not PORC. My own
    view is that this is extremely unlikely. But in any case I believe that Marcus's group
    provides a counterexample to what I hazard to call the \emph{philosophy} behind Higman's
  conjecture.''}
\end{quotation}
\smallskip

\noindent
We are hoping the reader views our results in a similar vein
(c.f.~\S\ref{ssec:fin-rems-alperin}).
%
%

\smallskip

The rest of this paper is structured as follows.  We begin with
definitions and notation in Section~\ref{sec:defs}.  In
Section~\ref{sec:pattern}, we prove some preliminary results
on co-adjoint orbits of the pattern groups.  We then proceed
to develop combinatorial tools giving recursions for the number
of co-adjoint orbits (Section~\ref{sec:combo_tools}).
Section~\ref{sec:embedding} is essentially poset theoretic,
which allows us to combine the results to prove theorems~\ref{thm:hp-new}
and~\ref{thm:u59}.  The experimental work which proves theorems~\ref{thm:n16}
and~\ref{thm:hp-small} is given the Section~\ref{sec:experimental}.
We conclude with final remarks and open problems in Section~\ref{sec:fin-rems}.

\bigskip

\section{Definitions and notation}\label{sec:defs}
For any finite group $G$, we write $k(G)$ for the number of conjugacy classes in $G$.
We assume the notation that $q=p^r$ is a prime power, and we denote by $\fq$ the finite field with $q$ elements.
In the matrix ring $M_{n\times n}(\fq)$, the element $e_{i,j}$ will denote the element which is one in cell~$(i,j)$ and zero everywhere else.
For $\alpha\in\fq^\times$, we will let $\E_{i,j}(\alpha)$ denote the elementary transvection $\E_{i,j}(\alpha):=1+\alpha e_{i,j}$.

Throughout the paper, all posets are finite, and typically denoted
by the letters $P$ and~$Q$.  We adopt the following notation regarding posets (c.f.~\cite{Sta}).
By a slight abuse of notation, we identify poset $P$ with a ground set on which
partial order~``$\prec$" is defined.
We use $\chain n$ and $\anti n$ to denote the $n$-chain and $n$-antichain,
respectively.  We use $\max(P)$ and $\min(P)$ to denote the set of maximal
and minimal elements, respectively.  The set of anti-chains in $P$ is denoted by~$\ac(P)$.
The set of pairs of distinct related elements on a poset $P$ is denoted
\[\rel(P):=\{(x,y): x\prec_P y\}.\]
When the poset $P$ is clear from context, we omit the subscript on the relation.
The \dfn{upper} and \dfn{lower bounds} of an element $x\in P$ are defined as
\[\ub_P(x):=\{y\in P: x\prec y\} \quad \text{and} \quad \lb_P(x):=\{y\in P: y\prec x\}.
\]

For a subset $S \subseteq P$, let $P|_S$ denote the subposet of $P$ induced on the set $S$.
As a special case, for $x\in P$, we write $P-x$ for the subposet of $P$ induced on $P\setminus\{x\}$.
For an element $x\in P$, let $P^{(x)}$ to be the poset consisting exclusively of
the relations where the larger element is $x$. That is, we have:
\[\rel\left( P^{(x)} \right)=\{(w,x): w\prec_P x\}.\]
We say that a poset $P$ is $Q$-free if no induced subposet of $P$ is isomorphic to $Q$.
For example, a poset is $\anti 2$-free if and only if it is a chain.
Similarly, a poset is $\chain2$-free if and only if it is $\anti n$.

For a poset $P$, the \dfn{dual} poset $P^*$ will be the one whose relations are reversed.
That is, if $x\prec_P y$, then $y\prec_{P^*}x$.
We also define two constructions of posets from smaller ones.
First, for posets $P$ and $Q$, their \dfn{disjoint union}
$P\amalg Q$ is a poset whose elements are the elements of $P$ and~$Q$,
and for which $x\prec y$ if either
\begin{enumerate}
  \item $x,y\in P$, and $x\prec_P y$, or
  \item $x,y\in Q$, and $x\prec_Q y$.
\end{enumerate}
Clearly, up to isomorphism, the operation $\amalg$ is both commutative and associative.
Second, the \dfn{lexicographic sum} $P+Q$ is the poset whose elements are the elements of $P$ and $Q$, and for which $x\prec y$ if any of the following hold:
\begin{enumerate}
  \item $x,y\in P$, and $x\prec_P y$,
  \item $x,y\in Q$, and $x\prec_Q y$, or
  \item $x\in P$ and $y\in Q$.
\end{enumerate}
In terms of the Hasse diagrams (the usual graphical representation of a poset), the lexicographic sum is obtained by placing $Q$ above $P$.
Hence, it is clear that the lexicographic sum is not commutative, but is associative (up to isomorphism).

\bigskip

\section{Pattern groups and the co-adjoint action}\label{sec:pattern}

\subsection{Definitions and basic results}
We recall the definition of pattern algebras and pattern groups given by Isaacs in~\cite{Isa}, but with slightly different notation.
Let $P$ be a poset on $\{1,2,\dots,n\}$ which has the standard ordering as a linear extension.
That is, whenever~$i\preceq_P j$, then we also have~$i\le j$.
Define the \dfn{pattern algebra} $\U_P(q)$ to be \[\U_P(q):=\{X\in M_{n\times n}(\fq) : X_{i,j}=0\text{ if }i\not\prec j\}.\]
Every pattern algebra $\U_P(q)$ is a nilpotent $\fq$-algebra.
In fact, the pattern algebra~$\U_P(q)$ is a subalgebra of the strictly upper-triangular matrices $\U_n(q)$.\footnote{Note that $\U_P$ depends not only on the isomorphism class of the poset $P$, but also on a specific linear extension of $P$.
  This definition is purely one of convenience.
  One could define $\U_P$ abstractly in terms of generators and relations in such a way as to make it clear that if $P$ and $Q$ are isomorphic posets, then $\U_P\cong\U_Q$.
We use this isomorphism throughout the paper without further mention.}

Define the \dfn{pattern group} $\UP(q):=\{1+X : X\in\U_P(q)\}$.
For general posets $P$, the group $\UP(q)$ is a subgroup of the unitriangular group $U_n(q)$.
To simplify notation, we often omit the field and write $\U_P$ instead of $\U_P(q)$. Similarly, we abbreviate $\UP(q)$ by $\UP$.

\begin{exmp}
  We exhibit several examples of posets and their associated pattern algebras and pattern groups.
  \begin{enumerate}
    \item If $P=\chain n$, then $\U_P=\U_n$, and $\UP=U_n$.
    \item If $P=\anti n$, then $\U_P$ is the trivial algebra, and $\UP$ consists only of the identity matrix.
    \item If $P$ is the poset in Figure~\ref{fig:pattern_alg_exmp}, then $\UP$ consists of matrices of the form shown in the figure.
      We can see that as a vector space, $\U_P$ is generated by \[\{e_{1,2},\ e_{1,3},\ e_{1,4},\ e_{1,5},\ e_{2,3},\ e_{2,4},\ e_{2,5},\ e_{3,4}\}.\]
      These are precisely the elements $e_{i,j}$ where $i\prec_P j$.
      As an algebra, $\U_P$ can be generated by fewer elements.
      In particular, the pattern algebra $\U_P$ can be generated (as an algebra) by $\{e_{1,2},e_{2,3},e_{3,4},e_{2,5}\}$.
      Note that $e_{i,j}$ is in this set precisely when $i$ and $j$ are connected by a line segment in the Hasse diagram (see Figure~\ref{fig:pattern_alg_exmp}).
      The generators are the minimal relations (in the language of posets, the \dfn{cover relations}).
  \end{enumerate}

  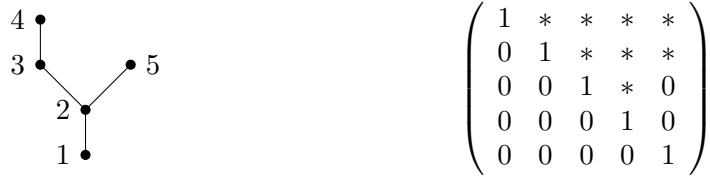
\begin{figure}[h]
    \begin{minipage}{0.45\linewidth}
      \centering
      \begin{tikzpicture}[scale=0.6]
        \draw[fill] (1,2) circle (.1cm);
        \node at (1.5,2) {5};

        \draw[fill] (-1,3) circle (.1cm);
        \node at (-1.5,3) {4};

        \draw[fill] (-1,2) circle (.1cm);
        \node at (-1.5,2) {3};

        \draw[fill] (0,1) circle (.1cm);
        \node at (-0.5,1) {2};

        \draw[fill] (0,0) circle (.1cm);
        \node at (-0.5,0) {1};

        \draw (1,2) -- (0,1);
        \draw (-1,3) -- (-1,2);
        \draw (-1,2) -- (0,1);
        \draw (0,1) -- (0,0);
      \end{tikzpicture}

    \end{minipage}
    \begin{minipage}{0.45\linewidth}
      \[\left( \begin{array}{ccccc}
          1 & \ast & \ast & \ast & \ast\\
          0 & 1 & \ast & \ast & \ast\\
          0 & 0 & 1 & \ast & 0\\
          0 & 0 & 0 & 1 & 0\\
          0 & 0 & 0 & 0 & 1
      \end{array} \right)\]
    \end{minipage}
    \caption{A poset $P$ and the form of elements in the associated pattern group $\UP$.}
    \label{fig:pattern_alg_exmp}
  \end{figure}
\end{exmp}

Pattern groups have a particularly nice presentation which we will need in Section~\ref{sec:combo_tools}.

\begin{prop}\label{prop:generators}
  For every poset $P$, we have
\[\UP(q)\. =\. \angs{\E_{i,j}(\alpha)\middle|i\prec_P j,\alpha\in\fq^\times}.\]
  Moreover, for every $\alpha,\beta\in\fq^\times$, we have
\[[\E_{i,j}(\alpha),\E_{k,\ell}(\beta)]\. = \.
\begin{cases}
    \E_{i,\ell}(\alpha\beta) & \text{if }j=k\ts,\\
    1 & \text{if }j\ne k\ts.
\end{cases}\]
\end{prop}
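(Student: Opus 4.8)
The plan is to prove Proposition~\ref{prop:generators} in two parts: first the generation statement, and then the commutator identity, which is the more mechanical of the two.

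For the generation statement, I would argue as follows. The containment ``$\supseteq$'' is immediate: each $\E_{i,j}(\alpha) = 1 + \alpha e_{i,j}$ lies in $\UP(q)$ whenever $i \prec_P j$, by the definition of the pattern algebra $\U_P(q)$ and the pattern group $\UP(q)$, and $\UP(q)$ is a group, so it contains the subgroup generated by these elements. For ``$\subseteq$'', take an arbitrary $g = 1 + X \in \UP(q)$, where $X \in \U_P(q)$ is supported on the relation set $\rel(P)$. I would clear the nonzero entries of $X$ one at a time, proceeding in order of decreasing ``length'' $j - i$ of the cell $(i,j)$ — equivalently, working from the entries farthest from the diagonal inward, or one can induct on the number of nonzero entries together with a suitable order. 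Concretely, if $(i,j)$ is a cell of maximal length $j-i$ among those where $X_{i,j} = c \neq 0$, then left-multiplying $g$ by $\E_{i,j}(-c)^{-1} = \E_{i,j}(-c)$ — wait, more carefully, one multiplies by an appropriate $\E_{i,j}(\alpha)$ to kill the $(i,j)$ entry; since $\E_{i,j}(\alpha)(1+X) = 1 + X + \alpha e_{i,j} + \alpha e_{i,j}X$ and the correction term $\alpha e_{i,j} X$ only affects cells $(i,\ell)$ with $\ell > j$, which are already zero by maximality of $j-i$, choosing $\alpha = -X_{i,j}$ zeroes out cell $(i,j)$ without disturbing the previously-cleared longer cells or the other cells of length $j-i$. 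Iterating, we reduce $g$ to the identity, expressing $g$ as a product of the generators $\E_{i,j}(\alpha)$ with $(i,j) \in \rel(P)$. One must check at each step that the cell $(i,j)$ being used satisfies $i \prec_P j$, which holds because $X \in \U_P(q)$ forces all nonzero entries of every matrix arising in the process to lie in $\rel(P)$ (the set $\U_P(q)$ is closed under the multiplications performed, as it is a subalgebra).

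For the commutator identity, this is a direct computation in the algebra $M_{n\times n}(\fq)$ using $e_{i,j} e_{k,\ell} = \delta_{j,k}\, e_{i,\ell}$. Writing $a = \E_{i,j}(\alpha) = 1 + \alpha e_{i,j}$ and $b = \E_{k,\ell}(\beta) = 1 + \beta e_{k,\ell}$, one notes $a^{-1} = 1 - \alpha e_{i,j}$ and $b^{-1} = 1 - \beta e_{k,\ell}$ (since $e_{i,j}^2 = 0$ as $i < j$, and similarly for $e_{k,\ell}$), and then expands $[a,b] = a b a^{-1} b^{-1}$. When $j \neq k$ and also $\ell \neq i$ — the generic case within $\UP$, and in fact $\ell = i$ cannot occur together with both cells lying above the diagonal unless one also has $j = k$, which should be dispatched with a remark about the linear-extension convention — all cross terms $e_{i,j}e_{k,\ell}$ and $e_{k,\ell}e_{i,j}$ vanish, so $a$ and $b$ commute and $[a,b] = 1$. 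When $j = k$, the surviving terms collapse to $1 + \alpha\beta\, e_{i,\ell} = \E_{i,\ell}(\alpha\beta)$ after cancellation; here one uses that $e_{i,\ell}^2 = 0$ (valid since $i < \ell$) so there are no higher-order contributions.

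The main obstacle is not any single computation but rather the bookkeeping in the generation argument: one must pin down the right order in which to clear entries so that each elementary multiplication leaves the already-processed entries untouched, and one must confirm that every intermediate matrix stays inside $\U_P(q)$ so that only legitimate generators $\E_{i,j}(\alpha)$ with $i \prec_P j$ are used. Both points follow from the fact that $\U_P(q)$ is a subalgebra of the strictly upper-triangular matrices together with a careful choice of ordering (by $j - i$ descending, breaking ties arbitrarily), so I expect the write-up to be short once that ordering is fixed. The commutator identity I would simply verify by the expansion above with a sentence explaining why the degenerate index coincidences do not arise.
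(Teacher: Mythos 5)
The paper states this proposition without proof, so your argument must stand on its own; as written it contains two genuine errors. First, the ordering in your generation argument is backwards. When you left-multiply $1+X$ by $\E_{i,j}(\alpha)$, the correction term $\alpha e_{i,j}X=\alpha\sum_{b>j}X_{j,b}\, e_{i,b}$ does not leave the cells $(i,b)$ with $b>j$ untouched --- it \emph{adds} $\alpha X_{j,b}$ to each of them. If $(i,j)$ has maximal length $j-i$ among the nonzero cells, those longer cells are zero beforehand but can become nonzero afterwards: for $n=3$ and $X=e_{1,2}+e_{2,3}$, clearing $(1,2)$ creates a nonzero entry at $(1,3)$. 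So the invariant ``previously-cleared longer cells are undisturbed'' is false, and the decreasing-length procedure does not work for the reason you give. The fix is to process cells in \emph{increasing} order of $j-i$: then the correction term only touches cells strictly longer than the one being cleared, i.e.\ cells not yet processed, and induction on the length finishes the argument. Your observation that every intermediate matrix stays in $\U_P(q)$, so that only generators $\E_{i,j}(\alpha)$ with $i\prec_P j$ are used, is correct and is exactly where transitivity of $\prec_P$ enters.

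Second, the coincidence $\ell=i$ cannot be dismissed. Your claim that $\ell=i$ forces $j=k$ is exactly backwards: $\ell=i$ forces $j\ne k$ (otherwise $k<\ell=i<j=k$), and the configuration $k\prec_P i\prec_P j$ already occurs in $U_3$ with the generators $\E_{2,3}(\alpha)$ and $\E_{1,2}(\beta)$. A direct expansion in that case gives $[\E_{i,j}(\alpha),\E_{k,i}(\beta)]=1-\alpha\beta e_{k,j}=\E_{k,j}(-\alpha\beta)\ne 1$ (under either commutator convention), so the case $j\ne k$, $\ell=i$ is a genuine third case of the Chevalley-type relation, not something to be waved away by the linear-extension convention. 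Indeed the displayed formula in the proposition is itself incomplete here, and a correct proof must either add this clause or state explicitly that the formula is asserted only when neither head-to-tail coincidence beyond $j=k$ occurs. Your computations in the two cases you do treat ($j=k$, and $j\ne k$ with $\ell\ne i$) are fine.
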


\medskip

\subsection{The adjoint and co-adjoint actions for pattern groups}
The \dfn{adjoint action} of $\UP$ on $\U_P$ is defined by
\[\Ad_g:X\mapsto gXg^{-1}\] for $g\in \UP$ and $X\in\U_P$.
Enumerating conjugacy classes of a pattern group is equivalent to enumerating orbits of the adjoint action.
Indeed, the action of $\UP$ on itself by conjugation is equivariant with the adjoint action, as \[1+\Ad_g(X)=1+gXg^{-1}=g(1+X)g^{-1}.\]

We consider the \dfn{co-adjoint action} of $\UP$ on the dual of $\U_P$.
For $f\in\U_P^*$ and $g\in \UP$, define \ts $K_g(f)\in\U_P^*$ \ts by
\[K_g(f):X\mapsto f(g^{-1}Xg),\]
for all $X\in\U_P$.
In other words, the co-adjoint action is given by \ts $K_g(f)=f\circ\Ad_{g^{-1}}$.

\begin{lem}\label{lem:adcoad}
  The number of co-adjoint orbits for a pattern group $\UP(q)$
  is equal to the number of adjoint orbits, and hence $k(\UP(q))$.
\end{lem}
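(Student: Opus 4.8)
The plan is to exhibit a bijection (or more simply, an equivariant duality) between adjoint orbits and co-adjoint orbits by counting fixed points. First I would recall that for any action of a finite group $G$ on a finite set $X$, the number of orbits equals $\frac{1}{|G|}\sum_{g\in G}|X^g|$ by Burnside's lemma, so it suffices to show that for every $g\in\UP$, the number of fixed points of $\Ad_g$ acting on $\U_P$ equals the number of fixed points of $K_g$ acting on $\U_P^*$. Both $\U_P$ and $\U_P^*$ are $\fq$-vector spaces of the same dimension $|\rel(P)|$, and for a fixed $g$ both $\Ad_g$ and $K_g$ are $\fq$-linear automorphisms; in fact $K_g = (\Ad_{g^{-1}})^{\!*}$ is (up to the inverse) the transpose of $\Ad_g$. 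The set of fixed points of a linear map $T$ is $\ker(T-\mathrm{id})$, and a linear map and its transpose have the same rank, hence the same nullity. Therefore $\dim\ker(\Ad_g-\mathrm{id}) = \dim\ker(\Ad_{g^{-1}}^{*}-\mathrm{id})$, which gives $|\U_P^{\,\Ad_g}| = q^{\dim\ker(\Ad_g-\mathrm{id})} = q^{\dim\ker(K_g-\mathrm{id})} = |(\U_P^*)^{K_g}|$.

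Having matched fixed-point counts for each group element, I would sum over $g\in\UP$ and divide by $|\UP|$ to conclude that the number of adjoint orbits on $\U_P$ equals the number of co-adjoint orbits on $\U_P^*$. Combining this with the observation already made in the text — that conjugation on $\UP$ is equivariant with the adjoint action on $\U_P$ via $X\mapsto 1+X$, so adjoint orbits are in bijection with conjugacy classes — yields the desired equality with $k(\UP(q))$.

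The only genuinely substantive point, and the one I expect to require the most care, is the identification $K_g = (\Ad_{g^{-1}})^{\!*}$ together with the elementary linear-algebra fact that a linear endomorphism and its transpose (equivalently, its adjoint with respect to any chosen basis) have equal rank; from the definition $K_g(f) = f\circ\Ad_{g^{-1}}$ this is immediate, since for any linear map $A\colon V\to V$ the dual map $A^{*}\colon V^{*}\to V^{*}$ is $\varphi\mapsto\varphi\circ A$, and $\operatorname{rank}(A^*-\mathrm{id}) = \operatorname{rank}((A-\mathrm{id})^*) = \operatorname{rank}(A-\mathrm{id})$. Everything else is bookkeeping: checking that $\Ad$ and $K$ are genuine left actions (so that the $g\mapsto g^{-1}$ reindexing in the Burnside sum is harmless, since it merely permutes the summands), and noting the counts are finite because $\fq$ is finite. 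An alternative, slightly more hands-on route would be to pick the basis $\{e_{i,j}: i\prec_P j\}$ of $\U_P$ and its dual basis $\{e_{i,j}^*\}$ of $\U_P^*$, write $\Ad_g$ as an explicit upper-unitriangular matrix (with respect to a suitable ordering of the pairs), observe that the matrix of $K_g$ in the dual basis is its transpose-inverse, and invoke the same rank equality; I would likely present the coordinate-free version as the main argument and mention the matrix picture as a remark.
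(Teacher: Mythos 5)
Your argument is correct and is essentially the paper's own proof: both apply Burnside's lemma to each action and match fixed-point counts for each $g$ by observing that $K_g-\mathrm{id}$ is the dual of $\Ad_{g^{-1}}-\mathrm{id}$, so the two kernels have equal dimension (the paper phrases this as counting functionals vanishing on $\im(\Ad_{g^{-1}-1})$, which is the same rank--nullity computation). Your explicit remarks on the $g\mapsto g^{-1}$ reindexing and the transpose-rank identity cover exactly the steps the paper leaves implicit.
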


Note that several versions of the lemma are known (c.f~\cite{K1,K2}).
In particular, Kirillov proves the special case of $U_n$ in~\cite{K3}.
We present a full proof here for completeness.

\begin{proof}
Extend both the adjoint and the co-adjoint actions by linearity
from $\UP$ to the entire group algebra $\zz[\UP]$.
Then for $f\in\U_P^*$, the co-adjoint action $K_{g-1}$ annihilates~$f$
if and only if~$f$ vanishes on the image of $\Ad_{g^{-1}-1}$.
Indeed, \[K_{g-1}(f)=K_g(f)-f=f\circ\Ad_{g^{-1}}-f=f\circ\Ad_{g^{-1}-1}.\]
Let $I_g=\im(\Ad_{g^{-1}-1})$.
We apply Burnside's lemma to count the orbits of the co-adjoint action:
  \begin{align*}
    \bigl|\U_P^*/\UP\bigr| \, &= \, \frac1{\abs{\UP}}\. \sum_{g\in \UP}\. \bigl|\ker(K_{g-1})\bigr| \, = \,
    \frac1{\abs{\UP}}\. \sum_{g\in \UP} \. \#\left\{f\in\U_P^*\. | \. I_g\subseteq\ker f\right\}\\
    &= \, \frac1{\abs{\UP}}\. \sum_{g\in \UP}\. q^{\dim\U_P-\dim I_g} \, = \,
    \frac1{\abs{\UP}}\. \sum_{g\in \UP}\. \bigl|\ker(\Ad_{g^{-1}-1})\bigr| \, = \, \bigl|\U_P/\UP\bigr|\ts.
  \end{align*}
  This completes the proof.
\end{proof}

\subsection{An explicit realization of the co-adjoint action}\label{subsec:lower}
In place of functionals on pattern algebras, we identify $\U^*_P$ with a quotient space of matrices.
Define
\[\low_P(q):=\left.M_{n\times n}(\fq)\middle/\bigoplus_{i\not\succ j}\fq e_{i,j}\right.\..\]

When $P=\chain n$ (the total order $\{1<\dots<n\}$), then $\low_P$ is the space of lower triangular matrices thought of as a quotient of all matrices by upper-triangular matrices (hence the notation ``$\low$'').
For general posets $P$, the space $\low_P$ is a quotient spaces of lower triangular matrices.

\begin{figure}[h]
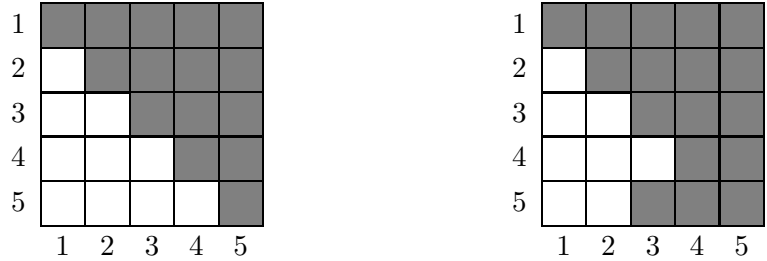

  \centering
  \begin{minipage}{0.45\textwidth}
    \centering
    \begin{ytableau}
      \none[1] & *(gray) & *(gray) & *(gray) & *(gray) & *(gray)\\
      \none[2] & & *(gray) & *(gray) & *(gray) & *(gray)\\
      \none[3] & & & *(gray) & *(gray) & *(gray)\\
      \none[4] & & & & *(gray) & *(gray)\\
      \none[5] & & & & & *(gray) \\
      \none & \none[1] & \none[2] & \none[3] & \none[4] & \none[5]
    \end{ytableau}
  \end{minipage}
  \begin{minipage}{0.45\textwidth}
    \centering
    \begin{ytableau}
      \none[1] & *(gray) & *(gray) & *(gray) & *(gray) & *(gray)\\
      \none[2] & & *(gray) & *(gray) & *(gray) & *(gray)\\
      \none[3] & & & *(gray) & *(gray) & *(gray)\\
      \none[4] & & & & *(gray) & *(gray)\\
      \none[5] & & & *(gray) & *(gray) & *(gray) \\
      \none & \none[1] & \none[2] & \none[3] & \none[4] & \none[5]
    \end{ytableau}
  \end{minipage}

  \caption{The spaces $\low_{\chain5}$ (left) and $\low_P$ (right) where $P$ is the poset shown in Figure~\ref{fig:pattern_alg_exmp}.
  The shaded cells represent those components of $M_{n\times n}$ which are quotiented away in $\low_P$.}
  \label{fig:low_chain_5}
\end{figure}

The space $\low_P$ is isomorphic to $\U_P^*(q)$.
Specifically, for each $A\in\low_P(q)$, define the functional \ts
$f_X\in\U_P^*$ \ts by
\[
f_X(A)\. := \. \tr(X\cdot A)\ts.
\]
This identification is well-defined, as the quotiented cells in
$\low_P$ (those $e_{i,j}$ with $i\not\succ j$) will precisely align
with the cells that are forced to be zero by the definition of $\UP$.
That is, if $i\not\succ j$, then for $A\in \UP$, we have $A_{j,i}=0$.
Thus, their contribution to the trace will be zero.

Pushing the co-adjoint action through this identification yields an
action of~$\UP$ on $\low_P$, which we also call the co-adjoint action
(and also write~$K_g$).
For $g\in \UP$ and $L\in\low_P$, the action becomes
\[K_g(L)\.=\. gLg^{-1}.\]
To be precise, let \ts $\rho:M_{n\times n}\to\low_P$ \ts
denote the canonical projection map.
For~${X\in\low_P}$, pick a representative $X'\in M_{n\times n}$
so that~$\rho(X')=X$. Then~${K_g(X)=\rho(gX'g^{-1})}$.
It is evident that the choice of such an $X'$ is irrelevant.

\begin{exmp}
  Let $P$ denote the poset shown in Figure~\ref{fig:pattern_alg_exmp}, and let $X\in\low_P(q)$ denote the element shown below on the left.
  We consider the co-adjoint action of the elementary matrix $E=\E_{2,3}(1)$ on $X$.

  \begin{center}
    \vspace{3mm}
    \begin{minipage}{0.3\textwidth}\centering
      \begin{ytableau}
        \none[1] & *(gray) & *(gray) & *(gray) & *(gray) & *(gray)\\
        \none[2] & 0 & *(gray) & *(gray) & *(gray) & *(gray)\\
        \none[3] & 1 & 0 & *(gray) & *(gray) & *(gray)\\
        \none[4] & 1 & 1 & 0 & *(gray) & *(gray)\\
        \none[5] & 0 & 1 & *(gray) & *(gray) & *(gray) \\
        \none & \none[1] & \none[2] & \none[3] & \none[4] & \none[5]
      \end{ytableau}
    \end{minipage}
    $\xrightarrow{\hspace{7mm}K_E\hspace{7mm}}$
    \begin{minipage}{0.3\textwidth}\centering
      \begin{ytableau}
        \none[1] & *(gray) & *(gray) & *(gray) & *(gray) & *(gray)\\
        \none[2] & 1 & *(gray) & *(gray) & *(gray) & *(gray)\\
        \none[3] & 1 & 0 & *(gray) & *(gray) & *(gray)\\
        \none[4] & 1 & 1 & -1 & *(gray) & *(gray)\\
        \none[5] & 0 & 1 & *(gray) & *(gray) & *(gray) \\
        \none & \none[1] & \none[2] & \none[3] & \none[4] & \none[5]
      \end{ytableau}
    \end{minipage}
  \end{center}

  Consider the left multiplication $X\mapsto EX$.
  This action adds the contents of row 3 to row 2.
  Thus, for $Y=K_E(X)$, we have $Y_{3,1}=X_{3,1}+X_{3,2}$.
  All other cells in row~2 are trivial in $\low_P$.
  For the right multiplication $EX\mapsto EXE^{-1}$,
  we take the contents of column~2 and subtract them
  from the contents of column~3.  We see that $Y_{4,3}=X_{4,3}-X_{4,2}$.
  All other cells in column~3 are trivial in $\low_P$, so this is the only relevant data.
\end{exmp}

Observe that each of these actions (conjugation, the adjoint action,
and the co-adjoint actions on $\U^*_P$ and $\low_P$) have the same number of orbits.
Therefore,
\[
k(\UP(q)) \. = \. \bigl|\U_P(q)/\UP(q)\bigr|\. = \.
\bigl|\U_P^*(q)/\UP(q)\bigr| \. =\. \bigl|\low_P(q)/\UP(q)\bigl|\ts.
\]
We use the notation $k(P)$ for this quantity.

\bigskip

\section{Combinatorial tools}\label{sec:combo_tools}
In this section we construct several tools to compute $k(P)$
using the structure of the poset~$P$.
We begin with several simple observations which lead to useful tools.

\subsection{Elementary operations}
We begin with a the following result which can be seen easily in the
language of the co-adjoint action on $\low_P$ (see~\S\ref{subsec:lower}).
We prove it via elementary group theory.

\begin{prop}\label{prop:obs}
  For posets $P$ and $Q$, we have
  \begin{enumerate}
    \item $k(P)=k(P^*)$
    \item $k(P)=k(P_1)\cdot k(P_2)$ where $P_i=P|_{S_i}$ for $i=1,2$, and $S_1,S_2\subseteq P$ such that $S_1\cup S_2=P$  and $P|_{S_1\cap S_2}$ contains no relations.
    \item $k(P\amalg Q)=k(P)\cdot k(Q)$
  \end{enumerate}
\end{prop}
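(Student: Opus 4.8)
The plan is to prove the three parts in order, each by exhibiting an explicit bijection (or near-bijection) between orbits, using whichever of the three equivalent models for $k(P)$ — conjugacy classes of $\UP$, adjoint orbits on $\U_P$, or co-adjoint orbits on $\low_P$ — is most convenient.

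For part (1), $k(P) = k(P^*)$, I would work at the level of groups: the map $\UP(q) \to \U_{P^*}(q)$ sending a matrix to its transpose-about-the-antidiagonal (equivalently, $X \mapsto w X^{\mathsf T} w$ where $w$ is the longest permutation matrix) is an anti-isomorphism of groups, since it reverses both the upper-triangular structure and the partial order. An anti-isomorphism $\phi$ satisfies $\phi(ghg^{-1}) = \phi(g)^{-1}\phi(h)\phi(g)$, so it carries conjugacy classes to conjugacy classes bijectively; hence $k(\UP) = k(\U_{P^*})$. (Alternatively, one can invoke Proposition~\ref{prop:generators}: the generators $\E_{i,j}(\alpha)$ and their commutation relations are symmetric under $(i,j) \mapsto (n{+}1{-}j, n{+}1{-}i)$, which reverses $\prec_P$.)

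For part (3), $k(P \amalg Q) = k(P)\cdot k(Q)$: in the disjoint union there are no relations between the $P$-block and the $Q$-block, so after a simultaneous relabelling the matrices of $\UP[P \amalg Q]$ are block-diagonal with a $\UP$-block and a $\U_Q$-block (and forced zeros elsewhere). Thus $\UP[P \amalg Q] \cong \UP \times \U_Q$ as groups, and conjugacy classes of a direct product are products of conjugacy classes, giving $k(P \amalg Q) = k(P)\,k(Q)$. Part (2) is the real content and properly generalizes (3): when $S_1 \cup S_2 = P$ and $P|_{S_1 \cap S_2}$ has no relations, I claim $\UP \cong \UP[P_1] \times \UP[P_2]$ as well. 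The point is that a pair $(i,j)$ with $i \prec_P j$ cannot have $i \in S_1 \setminus S_2$ and $j \in S_2 \setminus S_1$ (or vice versa): any such relation would have to be "witnessed" within one of the induced subposets, and if both endpoints lay in $S_1 \cap S_2$ there would be a relation in $P|_{S_1 \cap S_2}$, a contradiction. Hence every relation of $P$ lies entirely in $S_1$ or entirely in $S_2$, so $\rel(P) = \rel(P_1) \cup \rel(P_2)$ with overlap only on the (relation-free) common part; using the generator description of Proposition~\ref{prop:generators} together with the commutation relations, the subgroups generated by the $S_1$-relations and by the $S_2$-relations commute elementwise and generate $\UP$, and their intersection is trivial. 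This yields the direct-product decomposition and hence the multiplicativity of $k$.

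The main obstacle is the bookkeeping in part (2): one must carefully verify the claim that no relation straddles $S_1 \setminus S_2$ and $S_2 \setminus S_1$, and then check that the two subgroups $\langle \E_{i,j}(\alpha) : (i,j)\in\rel(P_1)\rangle$ and $\langle \E_{k,\ell}(\beta) : (k,\ell)\in\rel(P_2)\rangle$ really do commute — this uses the second clause of Proposition~\ref{prop:generators}, and one has to confirm that whenever $j = k$ for a $P_1$-relation $(i,j)$ and a $P_2$-relation $(k,\ell)$, the resulting commutator $\E_{i,\ell}(\alpha\beta)$ is itself a generator of both (because then $i,j,\ell$ all lie in $S_1 \cap S_2$ and $i \prec \ell$ in $P$), so the product set is closed. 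Once the direct-product structure is established, the conjugacy-class count is immediate, and (3) is just the special case $S_1 \cap S_2 = \emptyset$.
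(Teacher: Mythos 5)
Your overall route is the paper's: part (1) via the anti-diagonal transposition (the paper composes it with inversion to get a genuine isomorphism $g\mapsto\phi(g^{-1})$, while you use the anti-isomorphism directly and note that anti-isomorphisms preserve conjugacy classes --- both are fine), and parts (2)--(3) via the decomposition $U_{P_1}\times U_{P_2}\cong \UP$ through the multiplication map, checking commutation of generators with Proposition~\ref{prop:generators}.

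The one place you go beyond the paper is also where your argument breaks. You claim the stated hypotheses already force every relation of $P$ to lie entirely inside $S_1$ or entirely inside $S_2$, but your justification (``any such relation would have to be witnessed within one of the induced subposets'') is circular: nothing in the hypotheses supplies such a witness. Indeed the claim is false as literally stated: take $P=\chain3$ with $S_1=\{1,2\}$, $S_2=\{2,3\}$. Then $S_1\cup S_2=P$ and $P|_{S_1\cap S_2}$ has no relations, yet $1\prec 3$ straddles the two parts, $\E_{1,2}(\alpha)$ and $\E_{2,3}(\beta)$ do not commute, and $k(\chain3)=q^2+q-1\neq q^2=k(P_1)\cdot k(P_2)$. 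The no-straddling property is really an implicit extra hypothesis (the paper simply asserts ``whenever $x\prec_P y$, then either $x\prec_{P_1}y$ or $x\prec_{P_2}y$'' without proof). Once it is assumed, your commutator analysis also simplifies: the case $j=k$ that worries you cannot occur, because $j\in S_1\cap S_2$ together with the no-straddling condition forces either $i$ or $\ell$ into $S_1\cap S_2$ and hence a forbidden relation there. Note that your own resolution of that case is internally inconsistent --- you place $i,j\in S_1\cap S_2$ with $i\prec j$, which the hypothesis explicitly rules out. So: same proof as the paper's in structure, but you should state the no-straddling condition as a hypothesis rather than derive it, and conclude that the two generating sets commute elementwise outright.
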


\begin{proof}
  For~(1), we must label the elements $P^*$ appropriately so that $i\le j$ whenever $i\preceq_{P^*} j$ (as required by the definition of pattern groups).
  Let $n=\abs{P}$, and for each $i\in P$, relabel the element $i$ with the label $n+1-i$.
  This will reverse the total ordering on $P$ so that it agrees with the partial ordering on $P^*$.
  In terms of matrices, we have expressed $U_{\hspace{-0.6mm}P^*}$ as the elements of $\UP$ ``transposed'' about the anti-diagonal.
  Let~$\phi$ denote this anti-diagonal transposition. Then the map $g\mapsto \phi(g^{-1})$ is an isomorphism between the groups $\UP$ and $U_{\hspace{-0.6mm}P^*}$, proving $k(P)=k(P^*)$.

  For~(2), let $P_i=P|_{S_i}$ for $i=1,2$.  We claim that the following map \ts
  $\psi:U_{\hspace{-0.6mm}P_1}\times U_{\hspace{-0.6mm}P_2}\to \UP$ \ts defined by
  \ts $\psi(g_1,g_2)=g_1g_2$ \ts is the isomorphism. First, note that for
  $g_i\in U_{\hspace{-0.6mm}P_i}$, the elements $g_1$ and $g_2$ commute.
  To this end, it suffices to see that generators commute, which follows
  from the fact that $P|_{S_1\cap S_2}$ has no relations, and Proposition~\ref{prop:generators}.
  Then $\psi$ is a homomorphism, as
  \[\psi(g_1,g_2)\psi(h_1,h_2)\. = \. g_1g_2h_1h_2=g_1h_1g_2h_2\. = \. \psi(g_1h_1,g_2h_2)\ts , \ \text{ and}\]
  \[\psi(g_1,g_2)^{-1}\. = \. g_2^{-1}g_1^{-1}=g_1^{-1}g_2^{-1}\. = \. \psi(g_1^{-1},g_2^{-1})\ts.\]
  Whenever $x\prec_P y$, then either $x\prec_{P_1}y$ or $x\prec_{P_2}y$.
  Therefore, every generator $\E_{x,y}(\alpha)$ of $\UP$ is either a generator of $U_{\hspace{-0.6mm}P_1}$ or $U_{\hspace{-0.6mm}P_2}$, so $\psi$ is surjective.
  It follows from the fact that ${\abs{U_{\hspace{-0.6mm}P_1}\times U_{\hspace{-0.6mm}P_2}}=\abs{\UP}}$ that $\psi$ is an isomorphism, proving that $k(P)=k(P_1)\cdot k(P_2)$.

  For~(3), apply~(2) to the poset $P\amalg Q$ with $S_1=P$ and $S_2=Q$.
\end{proof}

\subsection{Poset systems}
Let $P$ be a subposet of $P'$.
Then $\U_P$ canonically injects into $\U_{P'}$, and so we obtain a canonical projection \[\pi_{P',P}:\low_{P'}\to\low_P.\]
This projection sends $e_{i,j}$ to zero whenever $i\prec_{P'}j$, but $i\not\prec_Pj$.
For specific choices of $P$ and~$P'$, this map can be used effectively to enumerate $k(P')$.

Fix a maximal element $m\in\max(P)$.
Of particular interest will be the poset $P^{(m)}$, defined by \[\rel\left( P^{(m)} \right)=\{(x,m): x\prec_P m\}.\]
To simplify notation, for the remainder of this section, let $Q=P^{(m)}$, and let $\pi=\pi_{P,Q}$.
That is, the projection $\pi$ annihilates all $e_{i,j}\in\low_P$ which are not of the form $e_{m,x}$ for $x\prec_Pm$ (see Figure~\ref{fig:proj_exmp}).

\begin{figure}[h]
  \centering
  \begin{tikzpicture}[scale=0.6]
    \draw[fill] (1,2) circle (.1cm);
    \node at (1.5,2) {5};

    \draw[fill] (-1,3) circle (.1cm);
    \node at (-1.5,3) {4};

    \draw[fill] (-1,2) circle (.1cm);
    \node at (-1.5,2) {3};

    \draw[fill] (0,1) circle (.1cm);
    \node at (-0.5,1) {2};

    \draw[fill] (0,0) circle (.1cm);
    \node at (-0.5,0) {1};

    \draw (1,2) -- (0,1);
    \draw (-1,3) -- (-1,2);
    \draw (-1,2) -- (0,1);
    \draw (0,1) -- (0,0);
  \end{tikzpicture}
  \vspace{5mm}

  \begin{minipage}{0.3\textwidth}\centering
    \begin{ytableau}
      \none[1] & *(gray) & *(gray) & *(gray) & *(gray) & *(gray)\\
      \none[2] & & *(gray) & *(gray) & *(gray) & *(gray)\\
      \none[3] & & & *(gray) & *(gray) & *(gray)\\
      \none[4] & & & & *(gray) & *(gray)\\
      \none[5] & & & *(gray) & *(gray) & *(gray) \\
      \none & \none[1] & \none[2] & \none[3] & \none[4] & \none[5]
    \end{ytableau}
  \end{minipage}
  $\xrightarrow{\hspace{5mm}\pi\hspace{5mm}}$
  \begin{minipage}{0.3\textwidth}\centering
    \begin{ytableau}
      \none[1] & *(gray) & *(gray) & *(gray) & *(gray) & *(gray)\\
      \none[2] & *(gray) & *(gray) & *(gray) & *(gray) & *(gray)\\
      \none[3] & *(gray) & *(gray) & *(gray) & *(gray) & *(gray)\\
      \none[4] & *(gray) & *(gray) & *(gray) & *(gray) & *(gray)\\
      \none[5] & & & *(gray) & *(gray) & *(gray) \\
      \none & \none[1] & \none[2] & \none[3] & \none[4] & \none[5]
    \end{ytableau}
  \end{minipage}
  \caption{A poset $P$, and the projection map $\pi:\low_P\to\low_{P^{(m)}}$ for $m=5$.}
  \label{fig:proj_exmp}
\end{figure}

The map $\pi$ induces an action of $\UP$ on $\low_Q$, the orbits of which are easy to analyze.
Define the \dfn{support} of an element $X\in \low_Q$ to be \[\supp(X):=\{x\in Q: X_{m,x}\ne0\}.\]
Each $\UP$-orbit of $\low_Q$ contains precisely one element whose support is an anti-chain in $\lb(m)$.
We can stratify the $\UP$-orbits of $\low_P$ by their image in $\low_Q$ under them map $\pi$.
That is,
\begin{equation}\label{eqn:ugly_sum}
  k(P)=\sum_X\abs{\pi^{-1}(X)\middle/\stab_{\UP}(X)}\tag{$\ast$},
\end{equation}
where the sum is over all elements in $\low_{Q}$ with anti-chain support.

Moreover, if $X,Y\in\low_{Q}$ have the same support $A\in\ac(\lb(m))$, then the corresponding summands for $X$ and for $Y$ in~\eqref{eqn:ugly_sum} are equal.
This can be seen by allowing the diagonal matrices to act on $\low_P$ by conjugation, and noting that for an appropriate choice of diagonal matrix~$\dz$, we have
\[\dz \ts \pi^{-1}(X)\ts \dz^{-1}\. =\. \pi^{-1}(Y)\ts.\]
Furthermore, for the same diagonal matrix $\dz$, we have
\[\dz\ts\stab_{\UP}(X)\ts \dz^{-1}\. =\. \stab_{\UP}(Y)\ts.
\]
Therefore we can sum over a single representative for each anti-chain, and take each summand with multiplicity $(q-1)^{\abs{A}}$.
That is,
\begin{equation}\label{eqn:less_ugly_sum}
  k(P)=\sum_{A\in\ac(\lb(m))}(q-1)^{\abs{A}}\abs{\pi^{-1}(1_A)\middle/\stab_{\UP}(1_A)}\tag{$\ast\ast$}
\end{equation}
where $1_A=\sum_{a\in A} e_{m,x}$, the indicator function on $A$.
Pictorially, we are stratifying the $\UP$-orbits of $\low_P$ by the bottom row in their associated diagram (see Figure~\ref{fig:pictorial}).

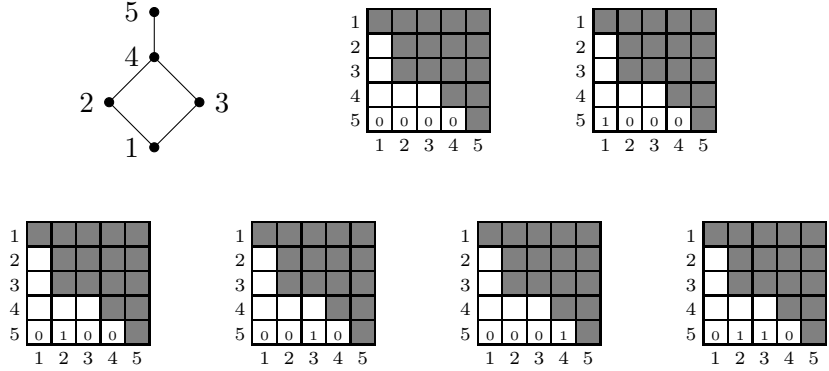
\begin{figure}[h]
  \begin{minipage}{0.2\textwidth}
    \begin{tikzpicture}[scale=0.6]
      \draw[fill] (0,0) circle (.1cm);
      \node at (-0.5,0) {1};

      \draw[fill] (-1,1) circle (.1cm);
      \node at (-1.5,1) {2};

      \draw[fill] (1,1) circle (.1cm);
      \node at (1.5,1) {3};

      \draw[fill] (0,2) circle (.1cm);
      \node at (-0.5,2) {4};

      \draw[fill] (0,3) circle (.1cm);
      \node at (-0.5,3) {5};

      \draw (0,2) -- (1,1) -- (0,0) -- (-1,1) -- (0,2) -- (0,3);
    \end{tikzpicture}
  \end{minipage}
  \ytableausetup{smalltableaux}
  \tiny
  \begin{minipage}{0.2\textwidth}\centering
    \begin{ytableau}
      \none[1] & *(gray) & *(gray) & *(gray) & *(gray) & *(gray)\\
      \none[2] & & *(gray) & *(gray) & *(gray) & *(gray)\\
      \none[3] & & *(gray) & *(gray) & *(gray) & *(gray)\\
      \none[4] & & & & *(gray) & *(gray)\\
      \none[5] & 0 & 0 & 0 & 0 & *(gray) \\
      \none & \none[1] & \none[2] & \none[3] & \none[4] & \none[5]
    \end{ytableau}
  \end{minipage}
  \begin{minipage}{0.2\textwidth}\centering
    \begin{ytableau}
      \none[1] & *(gray) & *(gray) & *(gray) & *(gray) & *(gray)\\
      \none[2] & & *(gray) & *(gray) & *(gray) & *(gray)\\
      \none[3] & & *(gray) & *(gray) & *(gray) & *(gray)\\
      \none[4] & & & & *(gray) & *(gray)\\
      \none[5] & 1 & 0 & 0 & 0 & *(gray) \\
      \none & \none[1] & \none[2] & \none[3] & \none[4] & \none[5]
    \end{ytableau}
  \end{minipage}
  \vspace{7mm}

  \begin{minipage}{0.2\textwidth}\centering
    \begin{ytableau}
      \none[1] & *(gray) & *(gray) & *(gray) & *(gray) & *(gray)\\
      \none[2] & & *(gray) & *(gray) & *(gray) & *(gray)\\
      \none[3] & & *(gray) & *(gray) & *(gray) & *(gray)\\
      \none[4] & & & & *(gray) & *(gray)\\
      \none[5] & 0 & 1 & 0 & 0 & *(gray) \\
      \none & \none[1] & \none[2] & \none[3] & \none[4] & \none[5]
    \end{ytableau}
  \end{minipage}
  \begin{minipage}{0.2\textwidth}\centering
    \begin{ytableau}
      \none[1] & *(gray) & *(gray) & *(gray) & *(gray) & *(gray)\\
      \none[2] & & *(gray) & *(gray) & *(gray) & *(gray)\\
      \none[3] & & *(gray) & *(gray) & *(gray) & *(gray)\\
      \none[4] & & & & *(gray) & *(gray)\\
      \none[5] & 0 & 0 & 1 & 0 & *(gray) \\
      \none & \none[1] & \none[2] & \none[3] & \none[4] & \none[5]
    \end{ytableau}
  \end{minipage}
  \begin{minipage}{0.2\textwidth}\centering
    \begin{ytableau}
      \none[1] & *(gray) & *(gray) & *(gray) & *(gray) & *(gray)\\
      \none[2] & & *(gray) & *(gray) & *(gray) & *(gray)\\
      \none[3] & & *(gray) & *(gray) & *(gray) & *(gray)\\
      \none[4] & & & & *(gray) & *(gray)\\
      \none[5] & 0 & 0 & 0 & 1 & *(gray) \\
      \none & \none[1] & \none[2] & \none[3] & \none[4] & \none[5]
    \end{ytableau}
  \end{minipage}
  \begin{minipage}{0.2\textwidth}\centering
    \begin{ytableau}
      \none[1] & *(gray) & *(gray) & *(gray) & *(gray) & *(gray)\\
      \none[2] & & *(gray) & *(gray) & *(gray) & *(gray)\\
      \none[3] & & *(gray) & *(gray) & *(gray) & *(gray)\\
      \none[4] & & & & *(gray) & *(gray)\\
      \none[5] & 0 & 1 & 1 & 0 & *(gray) \\
      \none & \none[1] & \none[2] & \none[3] & \none[4] & \none[5]
    \end{ytableau}
  \end{minipage}
  \caption{The stratification of $\low_{P}$, where $P$ is the displayed poset.
  The maximal element is 5, and $\lb_P(5)=\{1,2,3,4\}$.
  Each diagram shown represents the form of an element in $\pi^{-1}(1_A)$ for a different anti-chain $A$.
  The possible anti-chains consist of the empty set, the singletons, and $\{2,3\}$ (displayed in order).
We take the first with multiplicity 1, the next four with multiplicity $(q-1)$, and the last one with multiplicity $(q-1)^2$.}
\label{fig:pictorial}
\end{figure}

The notation in~\eqref{eqn:less_ugly_sum} is quite cumbersome, even after suppressing some of the subscripts.
We make the following definition which keeps track of the essential data.

\begin{defn}
  A \dfn{poset system} is a triple $(P,m,A)$ consisting of a poset~$P$,
  a maximal element $m\in\max(P)$, and an anti-chain $A\in\ac(\lb(m))$.
\end{defn}

Let $S=(P,m,A)$ be a poset system.  By a slight abuse of notation,
we define $k(S)=k(S;q)$ as follows:
\[ k(S) \. := \. \abs{\pi^{-1}(1_A)\middle/\stab_{\UP}(1_A)}\ts,\]
where $\pi=\pi_{P,Q}$ and $Q=P^{(m)}$ as above.
For any poset~$P$, and any $m\in\max(P)$, we may rewrite~\eqref{eqn:less_ugly_sum} in this more condensed notation,
\begin{equation}\label{eqn:main_tool}
  k(P)=\sum_{A\in\ac(\lb(m))}(q-1)^{\abs A}k(P,m,A).\tag{$\circ$}
\end{equation}
This relation is our main tool for computing~$k(U_n)$. We show that
under certain conditions on poset systems~$S$,
there exists a poset $Q$ for which ${k(S)=k(Q)}$.
When such a poset exists, we then recursively apply~\eqref{eqn:main_tool}.

Formally, whenever $k(S)=k(P)$ for a poset~$P$ and poset system~$S$,
we say that $S$ \dfn{reduces to}~$P$, and that $S$ is \dfn{reducbile}.

\begin{rmk}
  If every poset system was reducible, an inductive argument would imply that~$k(P)$ was a polynomial for every poset~$P$.
  This is certainly not the case, as Halasi and P\'alfy have constructed posets for which $k(P)$ is not a polynomial~\cite{HP}.
  However, by adding suitable constraints, we can guarantee that $S$ is reducible.
\end{rmk}

\begin{lem}\label{lem:remove_max}
  Let $S=(P,m,A)$ be a poset system such that there exists no pair of elements $(a,x)\in A\times P$ for which $a\prec x\prec m$.
  Then $S$ reduces to $P-m$.
\end{lem}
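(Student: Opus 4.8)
The plan is to work with the explicit co-adjoint model on $\low_P$ from \S\ref{subsec:lower} and to show that, under the stated hypothesis, the fiber $\pi^{-1}(1_A)$ is a $\UP$-stable subset of $\low_P$ on which the $\UP$-action, after an obvious change of coordinates, is exactly the co-adjoint action of $U_{P-m}$ on $\low_{P-m}$. Granting this, the two actions have the same orbits, so $k(S)=\bigl|\pi^{-1}(1_A)/\UP\bigr|=\bigl|\low_{P-m}/U_{P-m}\bigr|=k(P-m)$.

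First I would set up coordinates. The surviving coordinates $e_{i,j}$ of $\low_P$ are those with $j\prec_P i$; since $m$ is maximal none of them has $j=m$, so they split into the block $i=m$, which is exactly $\low_Q$ (recall $Q=P^{(m)}$), and the block $i\neq m$, which is exactly $\low_{P-m}$. Hence $\low_P=\low_Q\oplus\low_{P-m}$, the map $\pi$ is projection onto $\low_Q$, and $\pi^{-1}(1_A)$ is the affine subspace where the ``row $m$'' block equals $1_A$ and the ``body'' block is free; write $X\mapsto X^\flat\in\low_{P-m}$ for the resulting bijection $\pi^{-1}(1_A)\to\low_{P-m}$. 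On the group side, Proposition~\ref{prop:generators} presents $\UP$ via the transvections $\E_{i,j}(\beta)$ with $i\prec_P j$: those with $j=m$ generate the abelian ``bottom row'' subgroup $N=\langle\E_{x,m}(\alpha)\rangle$, those with $j\neq m$ (which forces $i\neq m$) generate $U_{P-m}$, and together they generate $\UP$. So it suffices to understand how these two families of generators act on $\pi^{-1}(1_A)$.

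The heart of the argument is two short transvection computations, using $gX'g^{-1}=X'+\beta e_{i,j}X'-\beta X'e_{i,j}-\beta^2 e_{i,j}X'e_{i,j}$ for $g=1+\beta e_{i,j}$, followed by the projection $\rho_P:M_{n\times n}\to\low_P$. For a generator $\E_{x,m}(\alpha)$ of $N$ and $X\in\pi^{-1}(1_A)$, every term but one dies in $\low_P$, and what remains is $X+\alpha\sum_{a\in A,\ a\prec_P x}e_{x,a}$; since $x\prec_P m$ and the hypothesis forbids $a\prec_P x\prec_P m$ for $a\in A$, this sum is empty, so $N$ fixes $\pi^{-1}(1_A)$ pointwise. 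For a generator $\E_{i,j}(\beta)$ of $U_{P-m}$ and $X\in\pi^{-1}(1_A)$, the result splits into a ``body'' contribution --- which is exactly the contribution of $\E_{i,j}(\beta)$ acting on $X^\flat\in\low_{P-m}$ --- plus one potential ``row $m$'' term coming from $X'e_{i,j}$, equal to $e_{m,j}$ when $i\in A$ and $0$ otherwise; but if $i\in A$ then $i\prec_P j$ together with the hypothesis force $j\not\prec_P m$, so $e_{m,j}$ is killed by $\rho_P$. Hence every such generator preserves $\pi^{-1}(1_A)$, fixing the row-$m$ block at $1_A$ and acting on the body exactly as in $\low_{P-m}$.

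Combining these, $\pi^{-1}(1_A)$ is $\UP$-stable, so $\stab_{\UP}(1_A)=\UP$; moreover, under $X\mapsto X^\flat$, the $N$-generators act as the identity and the $U_{P-m}$-generators act as the usual co-adjoint generators of $U_{P-m}$ on $\low_{P-m}$. Since these together generate $\UP$, the $\UP$-orbits on $\pi^{-1}(1_A)$ are exactly the $U_{P-m}$-orbits on $\low_{P-m}$, and $k(S)=k(P-m)$ follows. I expect the main obstacle to be purely the bookkeeping in the previous paragraph: tracking which matrix units survive $\rho_P$, and confirming that the only coupling between the row-$m$ block and the body of $X$ under any generator is the single term the hypothesis suppresses --- that is, that ``nothing lies strictly between $A$ and $m$'' is precisely the condition decoupling $\low_Q$ from $\low_{P-m}$ inside the action. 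One should also record the degenerate case $A=\varnothing$, where the reduction is immediate.
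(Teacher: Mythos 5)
Your proof is correct and follows essentially the same route as the paper's: both arguments split the generators of $\UP$ into the transvections $\E_{x,m}(\alpha)$ (shown to act trivially on $\pi^{-1}(1_A)$ under the hypothesis) and the generators of $U_{\hspace{-0.6mm}P-m}$ (shown to fix the row-$m$ block and to act on the remaining cells exactly as the co-adjoint action on $\low_{P-m}$). Your version merely makes the transvection computations and the identification $\pi^{-1}(1_A)\cong\low_{P-m}$ more explicit than the paper does.
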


\begin{proof}
  We begin by showing that the entire group $\UP$ stabilizes $1_A$.
  Let $\alpha\in\fq^\times$, and let $E=\E_{x,y}(\alpha)$ be a generator of $\UP$.
  If $x\not\in A$, then it is easy to see that $K_E(1_A)=1_A$.
  On the other hand, if $x\in A$, then by assumption $y\not\prec m$.
  Therefore, we have $K_E(1_A)=1_A-e_{m,y}=1_A$, as $e_{m,y}$ is trivial in $\low_Q$.

  From Proposition~\ref{prop:generators}, we know that each of the generators $\E_{x,y}(\alpha)$ of $\UP$ is either a generator of $U_{\hspace{-0.6mm}P-m}$, or of the form $\E_{x,m}(\alpha)$ for $\alpha\in\fq^\times$.
  Because each generator of the form $\E_{x,m}(\alpha)$ acts trivially on $\low_P$, we have
  \[k(S)=\abs{\pi^{-1}(1_A)/\stab_{\UP}(1_A)}=\abs{\pi^{-1}(1_A)/U_{\hspace{-0.6mm}P-m}}.\]

  Now every element of $U_{\hspace{-0.6mm}P-m}$ acts trivially on row $m$ (the $\fq$-linear span of $e_{m,x}$).
  Simply removing this row yields the co-adjoint action of $U_{\hspace{-0.6mm}P-m}$ on $\low_{P-m}$, so \[k(S)=\abs{\pi^{-1}(1_A)/U_{\hspace{-0.6mm}P-m}}=\abs{\low_{P-m}/U_{\hspace{-0.6mm}P-m}}=k(P-m)\] as desired.
\end{proof}

\begin{lem}\label{lem:normal_conj}
  Let $(P,m,A)$ be a poset system, and suppose that $a,b\in A$ such that
  \[\ub(a)\supseteq\ub(b)\text{ and }\lb(a)\subseteq\lb(b).\]
  Then \ts $k(P,m,A)=k(P,m,A-\{b\})$.
\end{lem}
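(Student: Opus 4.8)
The plan is to exhibit an explicit bijection between $\pi^{-1}(1_A)/\stab_{\UP}(1_A)$ and $\pi^{-1}(1_{A'})/\stab_{\UP}(1_{A'})$, where $A' = A - \{b\}$, realized by a linear map on $\low_P$ that commutes with the relevant group actions. The guiding intuition is that the hypotheses $\ub(a) \supseteq \ub(b)$ and $\lb(a) \subseteq \lb(b)$ say precisely that, as far as the poset structure ``sees'' it, $a$ dominates $b$: any generator $\E_{x,y}(\alpha)$ that could move the $e_{m,b}$ entry into row $m$ (i.e. with $y = b$, so $x \prec b$, hence $x \prec a$ by $\lb(a) \subseteq \lb(b)$) simultaneously controls the $e_{m,a}$ entry, and any generator acting on $e_{m,a}$ from the right (with $x = a$, $a \prec y$, hence $b \prec y$ by $\ub(a) \supseteq \ub(b)$) also acts on $e_{m,b}$. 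So the two entries are ``slaved together,'' and the entry at $b$ is redundant.

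Concretely, first I would recall that by Proposition~\ref{prop:generators} the generators of $\UP$ acting on $\low_P$ either leave row $m$ fixed pointwise (the $\E_{x,m}(\alpha)$), act on row $m$ by a left multiplication adding a multiple of row $y$ to row $x$ when $x \prec_P y$ and $x \ne m$, or act by the corresponding column operation; since $m$ is maximal, only left operations $\E_{x,y}(\alpha)$ with $x \prec_P y$ and $y \ne m$ alter the $(m,\cdot)$ entries, and then only the entry in column $x$ — it gets incremented by (coefficient $\times$) the entry in column $y$. Thus the value of $1_A$ at column $x$ can be modified by a generator $\E_{x,y}$ exactly when there is some $y$ with $x \prec y$ and the column-$y$ entry is nonzero. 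The key computation: because $a \prec y \Rightarrow b \prec y$ and $b \prec y \Rightarrow$ (nothing forces $a\prec y$ in general, but we only need one direction), I would check that the subset of generators fixing $1_{A'}$ is exactly the subset fixing $1_A$, using that $a \in A'$ already constrains the generators of the form $\E_{x,a}$ and $\E_{a,y}$, and that $\lb(b) \supseteq \lb(a)$, $\ub(b) \subseteq \ub(a)$ force every generator $\E_{x,b}$ or $\E_{b,y}$ that would fail to stabilize to be one that already fails to stabilize $1_A$ via $a$. Hence $\stab_{\UP}(1_A) = \stab_{\UP}(1_{A'})$ as subgroups of $\UP$.

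Next I would argue the fibers match up. Let $H = \stab_{\UP}(1_A) = \stab_{\UP}(1_{A'})$. Define $\Phi : \low_P \to \low_P$ to be the linear map that is the identity except on the single coordinate $e_{m,b}$, which it sends to $e_{m,b} + e_{m,a}$ — equivalently, adding column $a$ into column $b$ within row $m$ only. This is invertible (unipotent), it carries $1_{A'}$ to $1_A$ (since $b \notin A'$ but $a \in A'$), and hence it carries $\pi^{-1}(1_{A'})$ bijectively onto $\pi^{-1}(1_A)$ because the other rows are untouched and $\pi$ only reads row $m$. It remains to verify that $\Phi$ is $H$-equivariant: for $g \in H$ and $X \in \low_P$, $\Phi(K_g X) = K_g(\Phi X)$. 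Since $H$ acts on row $m$ only through left column-operations $e_{m,x} \mapsto e_{m,x} + c\, e_{m,y}$ with $x \prec y$, and these only ever feed column $y > x$ into column $x$, the commutation reduces to checking compatibility of ``add column $a$ into $b$'' with each such operation; the nontrivial cases involve generators touching columns $a$ or $b$, and exactly here the hypotheses $\lb(a)\subseteq\lb(b)$, $\ub(a)\supseteq\ub(b)$ (now read for $x,y$ ranging over the poset) guarantee that whatever flows into column $b$ also flows, compatibly, into column $a$, so the two operations commute on the nose. Therefore $\Phi$ descends to a bijection $\pi^{-1}(1_{A'})/H \to \pi^{-1}(1_A)/H$, giving $k(P,m,A) = k(P,m,A')$.

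The main obstacle is the equivariance check in the last step: one must enumerate the ways a generator of $H$ can simultaneously interact with columns $a$ and $b$ of row $m$ and confirm, case by case using both containment hypotheses, that ``conjugate then shear'' equals ``shear then conjugate,'' including the degenerate possibility that a single generator $\E_{a,b}$ or a chain $x \prec a \prec b$ is present. I expect this to be a short but slightly fiddly finite verification rather than a genuine difficulty, and everything else (invertibility of $\Phi$, the fiber bijection, equality of stabilizers) is routine given Proposition~\ref{prop:generators} and the explicit description of the co-adjoint action on $\low_P$ from \S\ref{subsec:lower}.
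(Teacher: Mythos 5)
Your guiding intuition (the hypotheses make $a$ ``dominate'' $b$, so the $e_{m,b}$ entry is redundant) is the right one, but two of your concrete claims fail. First, the stabilizers are \emph{not} equal as subgroups. Take $P=\{a,b,y,m\}$ with $a\prec y\prec m$, $b\prec y$, $b\prec m$, $a\prec m$, and $A=\{a,b\}$; the hypotheses $\ub(a)\supseteq\ub(b)$ and $\lb(a)\subseteq\lb(b)$ hold. The generator $\E_{b,y}(1)$ sends $1_A$ to $1_A-e_{m,y}\ne 1_A$, but fixes $1_{A'}$ (where $A'=A-\{b\}$), so $\stab_{\UP}(1_A)\ne\stab_{\UP}(1_{A'})$. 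The correct statement, and the one the paper uses, is that the two stabilizers are \emph{conjugate} by the element $E=\E_{a,b}(1)$, which lies outside $\UP$ (as $a,b$ are incomparable) but normalizes it — and it is exactly in verifying $E\UP E^{-1}=\UP$ via Proposition~\ref{prop:generators} that the two containment hypotheses are used. Second, your map $\Phi$ (add the $(m,a)$ entry into the $(m,b)$ entry and touch nothing else) is not equivariant for the group you need: in the same example, for $g=\E_{b,y}(1)\in\stab_{\UP}(1_{A'})$ one computes $\Phi(K_g X)_{m,y}=X_{m,y}-X_{m,b}$ while $K_g(\Phi X)_{m,y}=X_{m,y}-X_{m,b}-X_{m,a}$, and $X_{m,a}=1$ on the fiber $\pi^{-1}(1_{A'})$. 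No untwisted equivariance can hold, since a $\UP$-equivariant bijection carrying $\pi^{-1}(1_{A'})$ onto $\pi^{-1}(1_A)$ would force $\stab_{\UP}(1_{A'})\subseteq\stab_{\UP}(1_A)$, contradicting the example.

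The fix is to replace your single-entry shear by genuine conjugation $\Phi(X)=EXE^{-1}$ on $\low_P$ (which also adjusts the rest of column $b$ and row $a$, e.g.\ $(j,b)\mapsto(j,b)-(j,a)$ for $j\succ b$), and to accept the twisted intertwining relation $\Phi(K_g(X))=K_{\Phi(g)}(\Phi(X))$, where $\Phi(g)=EgE^{-1}$ is an automorphism of $\UP$. Then $\Phi(1_A)=1_{A'}$, $\Phi$ carries $\pi^{-1}(1_A)$ to $\pi^{-1}(1_{A'})$ and $\stab_{\UP}(1_A)$ to $\stab_{\UP}(1_{A'})$, and the orbit counts agree. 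This is the paper's argument; your version, as written, does not go through.
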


\begin{proof}
  Let \ts $\Phi:\low_P\to\low_P$ \ts denote conjugation by \ts $E=\E_{a,b}(1)$.
  Note that \ts $\E\not\in\UP$, since $a$ and $b$ are incomparable.
  However, $E$ normalizes $\UP$, and so the map~$\Phi$ is well-defined.
  As a slight abuse of notation, we also use $\Phi$ to denote the conjugation map~$\Phi:\UP\to\UP$ given by $\Phi(g)=EgE^{-1}$.
  It is now a triviality that for $X\in\low_P$ and $g\in\UP$, we have \[\Phi(K_g(X))=K_{\Phi(g)}(\Phi(X)).\]

  Now let $Q=P^{(m)}$ and $\pi=\pi_{P,Q}$.
  Pushing $\Phi$ through $\pi$ to an action of $\low_Q$, we have
  \[\Phi(1_A)=E(1_A)E^{-1}=1_A-e_{m,b}=1_{A-\{b\}}.\]
  Moreover, as $\Phi$ commutes with $\pi$, we have $\Phi(\pi^{-1}(1_A))=\pi^{-1}(1_{A-\{b\}})$.
  Lastly, note that $\Phi(\stab_{\UP}(1_A))=\stab_{\UP}(1_{A-\{b\}})$.
  Thus, we have \[k(P,m,A)=\abs{\pi^{-1}(1_A)\middle/\stab_{\UP}(1_A)}=\abs{\pi^{-1}(1_A)\middle/\stab_{\UP}(1_A)}=k(P,m,A-\{b\}),\] as desired.
\end{proof}

\subsection{The operator $\D$}
Let $S=(P,m,A)$ be a poset system.
Define $\D(S)$ to be a poset obtained from $P$ by removing relations $a\prec x$ whenever the following two criteria hold:

\begin{enumerate}
  \item $a\in A$, and $a\prec x\prec m$.
  \item If $a'\in A$ and $a'\prec x$, then $a'=a$.
\end{enumerate}
Stated more concisely, the set of pairs of related elements in $\D(S)$ is given by \[\rel(\D(S))=\rel(P)\setminus\{(a,x): a\prec x\prec m,\ \abs{A\cap\lb(x)}=1\}.\]

In figures~\ref{fig:D_example1} and~\ref{fig:D_example2}, we provide examples of poset systems $S$ and the application of the operator~$\D(S)$.
Poset systems are shown graphically as the Hasse diagram of their underlying poset with special marked elements.
Generic elements of $P$ will be denoted by ``$\bullet$'' as with they normally are in the Hasse diagram of a poset.
The elements of the anti-chain $A$ will be denoted by ``$\circ$.''
The maximal element $m$ will be denoted by ``{\tiny$\square$}.''

\begin{figure}[h]
  \centering
  \begin{tikzpicture}[scale=0.6]
    \draw (0,4) -- (0,0);

    \draw[draw=black, fill=white] (-0.1,3.9) rectangle (0.1,4.1);
    \node at (-0.5,4) {5};

    \draw[fill] (0,3) circle (.1cm);
    \node at (-0.5,3) {4};

    \draw[draw=black, fill=white] (0,2) circle (.1cm);
    \node at (-0.5,2) {3};

    \draw[fill] (0,1) circle (.1cm);
    \node at (-0.5,1) {2};

    \draw[fill] (0,0) circle (.1cm);
    \node at (-0.5,0) {1};

    \draw[fill] (10,3) circle (.1cm);
    \node at (9.5,3) {5};

    \draw[fill] (10,2) circle (.1cm);
    \node at (9.5,2) {3};

    \draw[fill] (11,2) circle (.1cm);
    \node at (11.5,2) {4};

    \draw[fill] (10,1) circle (.1cm);
    \node at (9.5,1) {2};

    \draw[fill] (10,0) circle (.1cm);
    \node at (9.5,0) {1};

    \draw (11,2) -- (10,1);
    \draw (10,3) -- (11,2);
    \draw (10,3) -- (10,0);
    \draw (10,1) -- (10,0);

  \end{tikzpicture}
  \caption{The poset system $S=(\chain 5,5,\{3\})$ shown on the left and the poset~$\D(S)$ shown on the right.}
  \label{fig:D_example1}

  \vspace{1cm}
  \begin{tikzpicture}[scale=0.6]
    \draw (1,4) -- (0,3);
    \draw (-1,4) -- (0,3);
    \draw (0,3) -- (0,0);

    \draw[draw=black, fill=white] (-1.1,3.9) rectangle (-0.9,4.1);
    \node at (-1.5,4) {6};

    \draw[fill] (1,4) circle (.1cm);
    \node at (1.5,4) {5};

    \draw[fill] (0,3) circle (.1cm);
    \node at (-0.5,3) {4};

    \draw[draw=black, fill=white] (0,2) circle (.1cm);
    \node at (-0.5,2) {3};

    \draw[fill] (0,1) circle (.1cm);
    \node at (-0.5,1) {2};

    \draw[fill] (0,0) circle (.1cm);
    \node at (-0.5,0) {1};

    \draw[fill] (9,3) circle (.1cm);
    \node at (8.5,3) {6};

    \draw[fill] (10,3) circle (.1cm);
    \node at (10.5,3) {5};

    \draw[fill] (9,2) circle (.1cm);
    \node at (8.5,2) {4};

    \draw[fill] (10,2) circle (.1cm);
    \node at (10.5,2) {3};

    \draw[fill] (10,1) circle (.1cm);
    \node at (10.5,1) {2};

    \draw[fill] (10,0) circle (.1cm);
    \node at (10.5,0) {1};

    \draw (9,3) -- (9,2);
    \draw (9,3) -- (10,2);
    \draw (9,2) -- (10,3);
    \draw (10,2) -- (10,3);
    \draw (9,2) -- (10,1);
    \draw (10,2) -- (10,1);
    \draw (10,1) -- (10,0);

  \end{tikzpicture}

  \caption{The poset system $S=(P, 6, \{3\})$ shown on the left and the poset~$\D(S)$ shown on the right.}
  \label{fig:D_example2}
\end{figure}

\begin{lem}\label{lem:apply_d}
  For any poset system $S=(P,m,A)$, we have $k(S)=k(\D(S),m,A)$.
\end{lem}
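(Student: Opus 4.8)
\smallskip

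The plan is to delete the relations removed by $\D$ one at a time, reducing to a single elementary step. List the pairs of $\rel(P)\setminus\rel(\D(S))$ as $(a_1,x_1),\dots,(a_t,x_t)$, ordered so that $x_i\prec_P x_j$ implies $i<j$; let $P_0=P$ and let $P_i$ be obtained from $P_{i-1}$ by deleting $(a_i,x_i)$. One first checks that $(a_i,x_i)$ is a \emph{cover} relation of $P_{i-1}$: were there $z$ with $a_i\prec z\prec x_i$ in $P_{i-1}$, then $a_i\prec_P z\prec_P x_i\prec_P m$ with $A\cap\lb_P(z)\subseteq A\cap\lb_P(x_i)=\{a_i\}$, so $(a_i,z)$ also lies in the removal set; since $z$ precedes $x_i$ in our order it was deleted earlier, a contradiction. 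Deleting a cover relation from a poset yields a poset, the relations into $m$ are never touched (so $(P_i)^{(m)}$ always equals $Q:=P^{(m)}$ and $\lb_{P_i}(m)=\lb_P(m)$), and $A$ stays an antichain; in particular $\D(S)=P_t$ is a poset. So it suffices to prove: if $P''$ arises from a poset $P'$ by deleting a cover relation $(a,x)$ with $a\in A$, $x\prec_{P'}m$, and $A\cap\lb_{P'}(x)=\{a\}$, then $k(P',m,A)=k(P'',m,A)$.

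Put $F'=\pi_{P',Q}^{-1}(1_A)\subseteq\low_{P'}$, $F''=\pi_{P'',Q}^{-1}(1_A)\subseteq\low_{P''}$, and $G'=\stab_{U_{P'}}(1_A)$, $G''=\stab_{U_{P''}}(1_A)$; the goal is $\abs{F'/G'}=\abs{F''/G''}$. The only cell of $\low_{P'}$ missing from $\low_{P''}$ is $(x,a)$, so $\rho:=\pi_{P',P''}\colon\low_{P'}\to\low_{P''}$ simply forgets that cell. Because $(a,x)$ is a cover of $P'$, a direct computation shows the line $\fq\,e_{x,a}$ is pointwise fixed under the co-adjoint action of $U_{P''}$; hence $\rho$ is $U_{P''}$-equivariant, and since $\pi_{P',Q}=\pi_{P'',Q}\circ\rho$ we get $F'=\rho^{-1}(F'')$, so $\rho$ restricts to a $U_{P''}$-equivariant surjection $F'\twoheadrightarrow F''$ with fibers the affine lines $X+\fq\,e_{x,a}$.

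Next I claim $G'=G''$. Using Proposition~\ref{prop:generators} and that $(a,x)$ is a cover of $P'$, one sees $U_{P''}\trianglelefteq U_{P'}$ with index $q$, complemented by the transvections $\E_{a,x}(\beta)$ together with the identity. As $m$ is maximal, $g\in U_{P'}$ acts on $\low_Q$ by $v\mapsto vg^{-1}$ (right multiplication) on the row-$m$ covector $v$, so $\E_{a,x}(\beta)$ sends $1_A\mapsto1_A-\beta e_{m,x}$, which differs from $1_A$ for $\beta\neq0$ since $x\prec_{P'}m$; thus $\E_{a,x}(\beta)\notin G'$. Writing $g\in G'$ as $g''\E_{a,x}(\beta)$ with $g''\in U_{P''}$ and using $a\in A$, $x\notin A$, and $A\cap\lb_{P''}(x)=\varnothing$ (which follows since $A\cap\lb_{P'}(x)=\{a\}$ and the cover $(a,x)$ is removed), one computes that the column-$x$ entry of $1_Ag$ is $\beta$, so $1_Ag=1_A$ forces $\beta=0$. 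Hence $G'\subseteq U_{P''}$, and as the $U_{P'}$-action on $\low_Q$ restricts to the $U_{P''}$-action we conclude $G'=\stab_{U_{P''}}(1_A)=G''=:G$. Finally every transvection $\E_{x,m}(\beta)$ lies in $G$: it fixes $1_A$ since $x\notin A$, and its co-adjoint action on $\low_{P'}$ alters only the cells $(x,l)$ with $l\prec_{P'}x$, sending $X_{x,l}\mapsto X_{x,l}+\beta X_{m,l}$; on $F'$ one has $X_{m,\cdot}=1_A$, so $X_{m,l}$ is nonzero only for $l\in A$, and by $A\cap\lb_{P'}(x)=\{a\}$ the single relevant cell is $l=a$. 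Thus $\E_{x,m}(\beta)$ acts on $F'$ as the translation $X\mapsto X+\beta e_{x,a}$.

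Hence $G$ is transitive on every fiber of $\rho|_{F'}$, and $\rho$ induces a bijection $F'/G\to F''/G$: it is onto, and if $\rho(X_2)=g\rho(X_1)=\rho(gX_1)$ then $X_2$ and $gX_1$ share a $\rho$-fiber, hence a $G$-orbit. Therefore $k(P',m,A)=\abs{F'/G}=\abs{F''/G}=k(P'',m,A)$, and chaining through $P_0,\dots,P_t$ yields $k(S)=k(\D(S),m,A)$. The crux — and the part most susceptible to slips — is the pair of explicit co-adjoint calculations: that $\E_{x,m}(\beta)$ perturbs \emph{only} the cell $(x,a)$ of a point of $F'$ (this is exactly where $A\cap\lb_{P'}(x)=\{a\}$ and the maximality of $m$ enter), and that membership in $\stab_{U_{P'}}(1_A)$ kills any $\E_{a,x}$-component so that $G'\subseteq U_{P''}$; granting those, the cover-at-a-time induction is routine.
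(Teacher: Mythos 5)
Your proof is correct, and it reaches the same two milestones as the paper's argument --- equality of the stabilizers of $1_A$ in $\UP$ and $U_{\D(S)}$, followed by the observation that the transvections $\E_{x,m}(\beta)$ translate points of the fiber over $1_A$ along the kernel of the projection, so that the projection induces a bijection on orbit sets --- but it gets there by a genuinely different route at two points. First, you factor the passage from $P$ to $\D(S)$ into single cover-relation deletions, ordered by a linear extension of the deleted upper elements $x_i$; the paper removes all of $R=\rel(P)\setminus\rel(\D(S))$ at once. Your decomposition is an extra organizational device, and it has the side benefit of verifying that $\D(S)$ is actually a poset, which the paper takes for granted. Second, and more substantively, your proof of stabilizer equality is a direct element computation: you split $U_{P'}$ as $U_{P''}$ extended by the transvections $\E_{a,x}(\beta)$ and show that the column-$x$ entry of $1_A\ts g$ equals $\beta$, forcing $\beta=0$ for any $g$ in the stabilizer. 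The paper instead deduces stabilizer equality indirectly from the orbit--stabilizer theorem, matching the index $q^{|R|}$ of $U_{\D(S)}$ in $\UP$ against the ratio $q^{|R'|}$ of the sizes of the two orbits of $1_A$ in $\low_Q$. Your version is more elementary and self-contained; the paper's is shorter once one accepts the two cardinality counts. Both arguments ultimately rest on the same key facts, which you correctly isolate: the hypothesis $A\cap\lb_P(x)=\{a\}$ makes $\E_{x,m}(\beta)$ perturb exactly the cell $(x,a)$, and the maximality of $m$ kills all other terms.
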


\begin{proof}
  Let $Q=P^{(m)}$.
  Not only is $Q$ a subposet of $P$, but it is also a subposet of $\D(S)$.
  Therefore every element of $A$ is less than $m$ in $\D(S)$ as well as in $P$, so the poset system $(\D(S),m,A)$ is well-defined.

  We first show that $\stab_{\UP}(1_A)=\stab_{U_{\D(S)}}(1_A)$.
  Clearly $\stab_{U_{\D(S)}}(1_A)\le\stab_{\UP}(1_A)$, so to show equality, it suffices to show that the two stabilizers have the same cardinality.
  Let~$\Omega_P$ denote the $\UP$-orbit of $\low_Q$ containing~$1_A$, and let $\Omega_{\D(S)}$ denote the $U_{\D(S)}$-orbit of $\low_{Q}$ containing~$1_A$.
  By the orbit-stabilizer theorem, it is enough to show that \[\frac{\abs{\UP}}{\abs{U_{\D(S)}}} = \frac{\abs{\Omega_P}}{\abs{\Omega_{\D(S)}}}.\]

  It is immediate from the definition of pattern groups that $\abs{\UP}=q^{\abs{\rel(P)}}$.
  Thus, we have $\abs{\UP}/\abs{U_{\D(S)}}=q^{\abs{R}}$, where $R=\rel(P)\setminus\rel(\D(S))$.
  We may characterize $R$ in a different way:
  \[R=\{(a,x)\in A\times \lb(m):\text{$a$ is the unique element of $A$ below $x$}\}.\]
  For pairs $(a,x)\in R$, the element $a\in A$ is uniquely defined by $x$, and so $R$ is in bijection with the set \[R'=\{x\prec_P m: \abs{\lb(x)\cap A}=1\}.\]

  We now turn to the $\UP$-orbits $\Omega_P$ and $\Omega_{\D(S)}$ in $\low_Q$.
  Certainly $X_{m,a}=1$ for each $a\in A$, and $X_{m,x}=0$ if $x\not\succ_P a$ for all $a\in A$.
  If, on the other hand, there does exist some $a\in A$ for which $a\prec_P x$, then by conjugation one can obtain any value at $X_{m,x}$.
  Specifically, note that for~$E=\E_{a,x}(\alpha)$, we have $K_E(X)=X-\alpha e_{m,x}$.
  It follows that

  \[\frac{\abs{\Omega_P}}{\abs{\Omega_{\D(S)}}}\. = \. q^{|R_1| \ts - \ts |R_2|}\ts, \ \ \. \text{where}\]

 \[R_1 \ts = \ts \{x\prec_P m~:~a\prec_P x\text{ for some }a\in A\} \ \, \text{and} \ \,  R_2 \ts = \ts
 \{x\prec_P m~:~a\prec_{\D(S)} x\text{ for some }a\in A\}.\]

  From the definition of $\D(S)$, we have $a\prec_{\D(S)} x$ if and only if there is more than one element of~$A$ which is less than $x$ in $P$.
  Hence, \[\frac{\abs{\Omega_P}}{\abs{\Omega_{\D(S)}}}=q^{\#\{x~:~\abs{\lb(x)\cap A}=1\}}=q^{\abs{R'}}.\]

  This proves that $\stab_{\UP}(1_A)=\stab_{U_{\D(S)}}(1_A)$.
  For the remainder of the proof, we let $G$ denote both of these groups.
  We are now left to show that $\pi_{P,Q}^{-1}(1_A)$ and $\pi_{\D(S),Q}^{-1}(1_A)$ are isomorphic $G$-sets.
  To do so, we need to construct a map between these two sets which preserves $G$-orbits.
  There is a natural choice for such a map:
  The canonical projection $\pi_{P,\D(S)}:\low_P\to\low_{\D(S)}$ restricts to \[\rho:\pi_{P,Q}^{-1}(1_A)\longrightarrow\pi_{\D(S),Q}^{-1}(1_A).\]

  We now argue that $\rho$ preserves $G$-orbits.
  More precisely, we claim that for all \mbox{$X,Y\in\pi_{P,Q}^{-1}(1_A)$,} the elements $X$ and $Y$ belong to the same $G$-orbit if and only if $\rho(X)$ and $\rho(Y)$ belong to the same $G$-orbit.

  Because $\rho$ respects the co-adjoint action, it is clear that $\rho(X)$ and $\rho(Y)$ belong to the same $G$-orbit whenever $X$ and $Y$ belong to the same $G$-orbit.
  In the other direction, suppose \mbox{$\rho(X)=K_g(\rho(Y))$} for some $g\in G$.
  Then $X-K_g(Y)\in\ker\rho$.
  It is easy to see that
  \[\ker\rho\, = \. \bigoplus_{(a,x)\in R}\fq e_{x,a}\hspace{1mm}.\]
  Indeed, the pairs $(a,x)\in R$ are precisely the pairs of elements
  for which $a\prec_P x$ but $a\not\prec_{\D(S)}x$, so linear combinations of the $e_{x,a}$ exactly the elements which are projected away by $\rho$. Now let $(a,x)\in R$, and let $E=\E_{x,m}(\alpha)$.
  For $Z\in\pi_{P,Q}^{-1}(1_A)$, we have \[K_h(Z)=Z+\alpha e_{a,x}.\]
  Thus, if two elements of $\pi_{P,Q}^{-1}(1_A)$ differ by an element of $\ker\rho$, they must belong to the same $G$-orbit.
  In particular, $X$ and $K_g(Y)$ belong to the same $G$-orbit.
  This proves \[k(S)=\abs{\pi_{P,Q}^{-1}(1_A)/G}=\abs{\pi_{\D(S),Q}^{-1}(1_A)/G}=k(\D(S),m,A),\] which completes the proof.
\end{proof}

\begin{lem}\label{lem:antichain_chain}
  Let $S=(P,m,A)$ be a poset system with $A=\{a_1,\dots,a_k\}$ such that
  \begin{align*}\label{eqn:lbub}
    \lb_P(a_1)\subseteq\lb_P(a_2)\subseteq\cdots\subseteq\lb_P(a_k)\text{ and}\\
    \ub_P(a_1)\subseteq\ub_P(a_2)\subseteq\cdots\subseteq\ub_P(a_k).\hspace{5mm}
  \end{align*}
  Further suppose that $m$ is the unique maximum above $a_1$.
  Then $S$ is reducible.
\end{lem}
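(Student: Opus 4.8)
The plan is to prove the sharper statement that $S$ reduces to $P-m$, arguing by induction on $k=|A|$ and freely replacing $P$ by $\D(S)$ via Lemma~\ref{lem:apply_d}. The base case $k=1$ is quick: applying $\D$ removes every relation $a_1\prec x$ with $x\prec_P m$ (the hypothesis $|A\cap\lb(x)|=1$ being automatic when $A=\{a_1\}$), and since $m$ is the unique maximum above $a_1$ every other element of $\ub_P(a_1)$ already lay below $m$; hence $\ub_{\D(S)}(a_1)=\{m\}$, nothing lies strictly between $a_1$ and $m$, and Lemma~\ref{lem:remove_max} reduces $(\D(S),m,A)$ to $\D(S)-m$. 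For the inductive step I would first record that $\D$ applied to $S$ deletes only relations of the form $a_k\prec x$: if $a_i\prec_P x\prec_P m$ with $i<k$, then $x\in\ub_P(a_i)\subseteq\ub_P(a_j)$ for every $j\ge i$, so $|A\cap\lb(x)|\ge 2$. Consequently all of the hypotheses pass to $(\D(S),m,A)$, and we may assume $P=\D(S)$.

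Given $P=\D(S)$ with $k\ge 2$, I would first try to shrink $A$ with Lemma~\ref{lem:normal_conj}: if $\lb(a_i)=\lb(a_{i+1})$ for some $i$, take $a=a_{i+1}$, $b=a_i$ to get $k(S)=k(P,m,A\setminus\{a_i\})$; dually, if $\ub(a_i)=\ub(a_{i+1})$, remove $a_{i+1}$. One checks the hypotheses descend to the smaller antichain — verbatim when the removed element is not $a_1$, and otherwise using $\lb(a_1)=\lb(a_2)$ together with $\ub(a_1)\subseteq\ub(a_2)$ to re-establish the unique-maximum condition for $a_2$ — and invokes the inductive hypothesis. This leaves the \emph{generic case}, where both $\lb(a_1)\subsetneq\cdots\subsetneq\lb(a_k)$ and $\ub(a_1)\subsetneq\cdots\subsetneq\ub(a_k)$ are strict. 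Here none of Lemmas~\ref{lem:remove_max}, \ref{lem:normal_conj}, \ref{lem:apply_d} applies directly — in particular Lemma~\ref{lem:remove_max} genuinely fails, since there may be an element strictly between $a_1$ and $m$, and then, by the $\ub$-nesting, strictly between \emph{all} of $A$ and $m$ — so some new input is needed.

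For the generic case I would argue directly with the co-adjoint action, again aiming at $k(S)=k(P-m)$. With $Q=P^{(m)}$ and $\pi:\low_P\to\low_Q$, the kernel $\ker\pi$ (spanned by the cells $e_{i,j}$ with $i\neq m$) is a $\UP$-submodule on which the generators $\E_{w,m}$ act trivially and the $\UP$-action coincides with the co-adjoint action of $U_{P-m}$ on $\low_{P-m}$ under the evident identification $\ker\pi\cong\low_{P-m}$; the fibre $\pi^{-1}(1_A)$ is a coset of $\ker\pi$ stable under $\stab_{\UP}(1_A)$. The generators of $\UP$ that fail to fix $1_A$ are precisely the $\E_{i,j}$ with $i\in A$ and $j\prec_P m$; since $P=\D(S)$, each such $j$ has a second element of $A$ below it, so these generators pair off into products lying in $\stab_{\UP}(1_A)$, and together with the already-stabilising generators they generate $\stab_{\UP}(1_A)$. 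The technical heart, which I expect to be the main obstacle, is to show that after identifying $\pi^{-1}(1_A)\cong\low_{P-m}$ and absorbing the affine translation contributed by the $\E_{w,m}$-generators (whose action on $\pi^{-1}(1_A)$ is translation by data read off $1_A$), the $\stab_{\UP}(1_A)$-action on $\pi^{-1}(1_A)$ has exactly the same orbits as the co-adjoint action of $U_{P-m}$ on $\low_{P-m}$; the strict nesting of the $\ub$-sets and the uniqueness of $m$ above $a_1$ are what should make the pairings-off and the affine absorption mutually compatible, so that no extra orbits appear. Carrying out this bookkeeping precisely is the crux; if it proves unwieldy, a fallback is to seek a cleaner inductive reduction, though the strictly-nested case shows that no single application of the three earlier lemmas can suffice.
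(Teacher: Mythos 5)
Your proposal tracks the paper's proof through the base cases and through the observation that $\D$ deletes only relations of the form $a_k\prec x$, but it misses the punchline of that observation, and this is where the genuine gap lies. The point is the converse of what you recorded: $\D$ removes \emph{every} relation $(a_k,x)$ with $x\in\ub_P(a_k)\setminus\ub_P(a_{k-1})$ and $x\prec m$, because for such an $x$ one has $\lb(x)\cap A=\{a_k\}$ (if $a_j\prec x$ for some $j<k$ then $x\in\ub(a_j)\subseteq\ub(a_{k-1})$, a contradiction). Consequently $\ub_{\D(S)}(a_k)=\ub_{\D(S)}(a_{k-1})$ (the paper here uses that every element above $a_k$ other than $m$ lies below $m$), while all lower-bound sets are untouched, so $(\D(S),m,A)$ satisfies the hypotheses of Lemma~\ref{lem:normal_conj} with $a=a_{k-1}$, $b=a_k$. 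This drops $a_k$ from the antichain and the induction closes immediately. In particular, your ``generic case'' with strictly nested upper-bound sets is vacuous once $\D$ has been applied, and your assertion that ``no single application of the three earlier lemmas can suffice'' there is false: one application of $\D$ followed by Lemma~\ref{lem:normal_conj} is exactly what suffices.

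Because you did not see this, you route the main case through a direct analysis of the co-adjoint action on $\pi^{-1}(1_A)$, and that analysis is not a proof: the claims that the non-stabilising generators ``pair off,'' that together with the rest they generate $\stab_{\UP}(1_A)$, and that the orbits on $\pi^{-1}(1_A)$ match those of $U_{P-m}$ on $\low_{P-m}$ after ``absorbing the affine translation'' are all asserted rather than argued, and you yourself flag the orbit-matching as the unproved crux. A secondary issue: your announced target, that $S$ reduces to $P-m$ itself, is stronger than the lemma and is not what the inductive mechanism delivers --- each round replaces $P$ by $\D(S)$, which strips relations, so the argument produces reducibility to \emph{some} poset (a relation-deleted version of $P-m$), not to $P-m$. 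As written, the proposal does not establish the lemma.
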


\begin{proof}
  We proceed by induction on $\abs{A}$.
  If $A=\emp$, then $k(S)=k(P-m)$ by Lemma~\ref{lem:remove_max}.
  If $\abs A=1$, then $k(S)=k(\D(S)-m)$ by lemmas~\ref{lem:apply_d} and~\ref{lem:remove_max} applied in succession.

  Now suppose the result holds whenever the anti-chain has fewer than $k$ elements, and let $\abs{A}=k$.
  Applying Lemma~\ref{lem:apply_d}, we have $k(S)=k(\D(S),m,A)$.
  Let
  \[R \. := \. \rel(P)\setminus\rel(\D(S))\ts,
  \]
  and note that because $m$ is the unique maximum in~$P$, we have
  \[R \. = \. \left\{(a,x)\in A\times P: \lb(x)\cap A=\{a\}\right\}\ts.
  \]
  If $(a_i,x)\in R$, then $a_i\prec_P x$, and for all $j\ne i$, it must be that $a_j\not\prec_P x$.
  Thus, if $(a_i,x)\in R$, it must be that $i=k$, and so \[R=\{(a_k,x): x\in\ub_P(a_k)\setminus\ub_P(a_{k-1})\}.\]
  Therefore $\ub_{\D(S)}(a_k)=\ub_{\D(S)}(a_{k-1})$, and so $(\D(S),m,A)$ satisfies the hypotheses of
  Lemma~\ref{lem:normal_conj}.  This tells us that $k(\D(S),m,A)=k(\D(S),m,A-\{a_k\})$.
  By inductive hypothesis, there exists a poset $Q$ for which $k(\D(S),m,A-\{a_k\})=k(Q)$.
  Stringing these equalities together yields~$k(S)=k(Q)$, as desired.
\end{proof}

\subsection{Reduction of \Y-posets}
With suitable constraints on the poset, we may obtain a recurrence relation for the number of conjugacy classes in its pattern group.
One such constraint is as follows.
Define the poset $\Y$ as in Figure~\ref{fig:y_poset}.
Recall that a poset is \dfn{\Y-free} if it does not have the poset $\Y$ as an induced subposet.

\begin{figure}[h]
  \centering
  \begin{tikzpicture}[scale=0.6]
    \draw (0,1) -- (0,2);
    \draw (-1,0) -- (0,1) -- (1,0);

    \draw[fill] (0,2) circle (.1cm);
    \draw[fill] (0,1) circle (.1cm);
    \draw[fill] (1,0) circle (.1cm);
    \draw[fill] (-1,0) circle (.1cm);
  \end{tikzpicture}
  \caption{The poset $\Y$.}
  \label{fig:y_poset}
\end{figure}

\begin{thm}\label{thm:y_free}
  Let $P$ be a \Y-free poset, and let $m\in\max(P)$.
  Then \[k(P)=\sum_{S=(P,m,A)}(q-1)^{\abs{A}}k(\D(S)-m).\]
\end{thm}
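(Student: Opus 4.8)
The plan is to deduce the identity directly from the stratification formula~\eqref{eqn:main_tool} together with Lemmas~\ref{lem:apply_d} and~\ref{lem:remove_max}. Fixing $m \in \max(P)$, equation~\eqref{eqn:main_tool} already gives $k(P) = \sum_{A \in \ac(\lb(m))} (q-1)^{|A|} k(P,m,A)$, so the theorem follows once I show that $k(P,m,A) = k(\D(S)-m)$ for every poset system $S = (P,m,A)$ (this is where $\Y$-freeness enters). By Lemma~\ref{lem:apply_d} we have $k(S) = k(\D(S),m,A)$, so it is enough to check that the poset system $(\D(S),m,A)$ satisfies the hypothesis of Lemma~\ref{lem:remove_max}; applying that lemma then yields $k(\D(S),m,A) = k(\D(S)-m)$, and substituting back into~\eqref{eqn:main_tool} finishes the argument.

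So the crux is the claim: \emph{if $P$ is $\Y$-free, then $\D(S)$ contains no pair $(a,x)$ with $a \in A$ and $a \prec_{\D(S)} x \prec_{\D(S)} m$}. I would prove this by contradiction. Since $\rel(\D(S)) \subseteq \rel(P)$, such a pair would satisfy $a \prec_P x \prec_P m$; and since the relation $(a,x)$ is still present in $\D(S)$, by the definition of the operator $\D$ it was \emph{not} deleted, which (as $a \in A$ and $a \prec_P x \prec_P m$) forces $|A \cap \lb_P(x)| \ge 2$, so there is some $a' \in A$ with $a' \ne a$ and $a' \prec_P x$. Now $a, a', x, m$ are four distinct elements of $P$: the only relations among them are $a \prec x$, $a' \prec x$, $x \prec m$ together with their consequences $a \prec m$, $a' \prec m$, while $a$ and $a'$ are incomparable since both lie in the antichain $A$. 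Hence the subposet of $P$ induced on $\{a,a',x,m\}$ is isomorphic to $\Y$, contradicting $\Y$-freeness. This establishes the claim, and with it the hypothesis of Lemma~\ref{lem:remove_max} for $(\D(S),m,A)$.

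It remains only to dispatch a couple of small bookkeeping points. First, $(\D(S),m,A)$ genuinely is a poset system: $m$ is still maximal in $\D(S)$, and every $a \in A$ is still below $m$ there, because $\D$ only deletes relations $(a,x)$ with $x \prec m$ and hence never deletes a relation with top element $m$ --- exactly the observation already used at the start of the proof of Lemma~\ref{lem:apply_d}. Second, in reading ``$a \prec_{\D(S)} x$'' as ``$(a,x) \in \rel(\D(S))$'' I am using that $\D(S)$ is a genuine poset, i.e.\ its relation set is already transitively closed; this is routine and follows from the fact that every deleted pair has an element of the antichain $A$ as its smaller element, so no single deletion can destroy transitivity. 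I expect these verifications --- plus confirming that the subposet induced on $\{a,a',x,m\}$ carries \emph{no} relations beyond those of $\Y$ --- to be the only real friction; the substantive content is just the recognition that ``two elements of $A$ sitting below a common $x \prec m$'' is precisely a forbidden $\Y$, and the rest is assembly of results already proved in Section~\ref{sec:combo_tools}.
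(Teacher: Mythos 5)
Your proposal is correct and follows essentially the same route as the paper's proof: reduce via~\eqref{eqn:main_tool} and Lemma~\ref{lem:apply_d} to checking the hypothesis of Lemma~\ref{lem:remove_max} for $(\D(S),m,A)$, and establish that hypothesis by observing that a surviving relation $a\prec_{\D(S)}x\prec_{\D(S)}m$ forces a second element of $A$ below $x$, producing an induced copy of $\Y$ on $\{a,a',x,m\}$. The bookkeeping points you flag (that $(\D(S),m,A)$ is a well-defined poset system, and that the induced subposet is exactly $\Y$) are handled the same way, or left implicit, in the paper.
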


\begin{proof}
  Let $S=(P,m,A)$ be a poset system.
  In light of~\eqref{eqn:main_tool}, it suffices to show that \[k(S)=k(\D(S)-m).\]
  By Lemma~\ref{lem:apply_d}, we see that $k(S)=k(\D(S),m,A)$.
  We claim that if $P$ is \mbox{\Y-free}, then $\D(S)$ has no element $x$ for which $a\prec_{\D(S)} x\prec_{\D(S)} m$.
  Once this claim is established, Lemma~\ref{lem:remove_max} proves that ${k(S)=k(\D(S)-m)}$.
  Suppose for the sake of contradiction that \mbox{$x\in\D(S)$}, and $a\in A$ such that \[{a\prec_{\D(S)}x\prec_{\D(S)}m}.\]
  Because $\D(S)$ is obtained from $P$ by removing relations, certainly $a\prec_P x\prec_P m$.
  Moreover, because $a\prec_P x$ was not removed, we know that $\abs{A\cap\lb(x)}>1$.
  Thus, there must be some other $b\in A$ with $b\prec_P x$.
  Now $\{a,b,x,m\}$ induces a copy of \Y~in $P$, which is a contradiction.
\end{proof}

\begin{rmk}
  Theorem~\ref{thm:y_free} did not use the full strength of the \Y-freeness condition.
  It is only necessary that $P$ be \Y-free below a single maximal element.
  Hence, we have the following strengthening of the theorem.
\end{rmk}

\begin{thm}\label{thm:y_free_stronger}
  Let $P$ be a poset, and suppose that there exists some $m\in\max(P)$ such that the poset induced on $\{x: x\preceq_P m\}$ is \Y-free.
  Then \[ k(P)=\sum_{S=(P,m,A)}(q-1)^{\abs{A}}k(\D(S)-m).\]
\end{thm}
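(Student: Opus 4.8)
The plan is to observe that the proof of Theorem~\ref{thm:y_free} never needs \Y-freeness of the whole poset $P$, only of the part of $P$ lying weakly below the chosen maximal element $m$. So I would essentially rerun that argument verbatim, being careful about exactly which elements enter the picture. Fix the maximal element $m$ for which $P|_{\{x:x\preceq_P m\}}$ is \Y-free, and set $Q=P^{(m)}$. By~\eqref{eqn:main_tool} applied at this particular $m$, it suffices to show $k(S)=k(\D(S)-m)$ for every poset system $S=(P,m,A)$. Lemma~\ref{lem:apply_d} already gives $k(S)=k(\D(S),m,A)$ with no hypothesis on $P$, so the only thing left is to verify that Lemma~\ref{lem:remove_max} applies to $(\D(S),m,A)$, i.e.\ that there is no pair $(a,x)\in A\times \D(S)$ with $a\prec_{\D(S)} x\prec_{\D(S)} m$.

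For this I argue by contradiction exactly as in Theorem~\ref{thm:y_free}: suppose $a\in A$ and $x$ satisfy $a\prec_{\D(S)} x\prec_{\D(S)} m$. Since $\D(S)$ is obtained from $P$ by deleting relations, we get $a\prec_P x\prec_P m$; and since the relation $a\prec_P x$ survived in $\D(S)$, the definition of $\D$ forces $|A\cap\lb_P(x)|>1$, so there is some $b\in A$, $b\neq a$, with $b\prec_P x$. Then $\{a,b,x,m\}$ is a four-element set in $P$ with $a,b$ incomparable (they lie in the antichain $A$), both below $x$, and $x$ below $m$, so it induces a copy of \Y. The key point is that all four of these elements lie weakly below $m$: indeed $a\preceq_P m$ since $a\in A\subseteq\lb_P(m)$, likewise $b\preceq_P m$, and $x\prec_P m$ by construction. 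Hence this copy of \Y\ sits inside the induced subposet $P|_{\{x:x\preceq_P m\}}$, contradicting the hypothesis that this subposet is \Y-free.

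The argument is essentially bookkeeping, and I do not expect a genuine obstacle; the one place to be slightly careful is the claim that the forbidden copy of \Y\ actually lives below $m$ rather than merely being an induced subposet of $P$ — that is where the localized hypothesis is used, and it works precisely because $A\subseteq\lb_P(m)$ and $x\prec_P m$. Once the claim about $\D(S)$ is established, stringing together $k(P)=\sum_{S}(q-1)^{|A|}k(S)$ from~\eqref{eqn:main_tool}, the identity $k(S)=k(\D(S),m,A)$ from Lemma~\ref{lem:apply_d}, and $k(\D(S),m,A)=k(\D(S)-m)$ from Lemma~\ref{lem:remove_max} yields the stated formula.
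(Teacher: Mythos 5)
Your proposal is correct and matches the paper's intended argument: the paper proves Theorem~\ref{thm:y_free} and then observes (in the remark preceding Theorem~\ref{thm:y_free_stronger}) that the only use of \Y-freeness is to rule out the configuration $\{a,b,x,m\}$ with $a,b\in A$ and $a,b\prec_P x\prec_P m$, all of which lie weakly below $m$. Your careful verification that this copy of \Y\ sits inside $P|_{\{x:x\preceq_P m\}}$ is exactly the bookkeeping the paper leaves implicit.
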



\medskip

\subsection{Interval posets}
In a different direction, we consider interval posets.
Given a collection of closed intervals $I_k=[\ell_k,r_k]$ in $\rr$,
one can define a partial order called the \dfn{interval order} on $\{I_k\}$
by declaring $I_j\preceq I_k$ whenever $r_j\le \ell_k$.  An \dfn{interval poset}
is a poset which is the interval order of some family of intervals on a line.
The class of interval posets is well studied (see e.g.~\cite{Tro}),
and has several equivalent characterizations.  For our purposes,
the important properties of interval poset will be items~(3) and~(4)
in the following theorem.

\begin{thm}[Theorem 3.2 in~\cite{Tro}]
\label{thm:interval_equiv}
  For a poset $P$, the following are equivalent:
  \begin{enumerate}
    \item $P$ is an interval poset,
    \item $P$ is $(\chain2\amalg\chain2)$-free,
    \item the collection of sets $\ub(x)$ for $x\in P$ is totally ordered by inclusion,
    \item and the collection of sets $\lb(x)$ for $x\in P$ is totally ordered by inclusion.
  \end{enumerate}
\end{thm}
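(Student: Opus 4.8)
The plan is to prove the cycle of implications $(1)\Rightarrow(3)\Rightarrow(2)\Rightarrow(4)\Rightarrow(1)$, which touches all four conditions; the equivalence $(1)\Leftrightarrow(2)$ is Fishburn's classical characterization of interval orders, and once the cycle is closed the remaining equivalences follow formally. Throughout I will use the elementary observation that $x\prec y$ implies $\lb(x)\subsetneq\lb(y)$ and $\ub(x)\supsetneq\ub(y)$, the strictness being witnessed by $x$ itself.

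For $(1)\Rightarrow(3)$: if $P$ is realized by intervals $\{I_x\}_{x\in P}$ with left and right endpoints $\ell(I_x)$, $r(I_x)$, so that $x\prec y$ iff $r(I_x)\le\ell(I_y)$, then $\ub(x)=\{y: r(I_x)\le\ell(I_y)\}$. If $\ub(x)$ and $\ub(x')$ were incomparable we could choose $y\in\ub(x)\setminus\ub(x')$ and $y'\in\ub(x')\setminus\ub(x)$, and these yield $r(I_x)\le\ell(I_y)<r(I_{x'})\le\ell(I_{y'})<r(I_x)$, a contradiction. For $(3)\Rightarrow(2)$ I argue by contraposition: an induced $\chain2\amalg\chain2$, say on $\{a,b,c,d\}$ with $c\prec a$ and $d\prec b$ the only relations, gives $a\in\ub(c)\setminus\ub(d)$ and $b\in\ub(d)\setminus\ub(c)$, so $\ub(c)$ and $\ub(d)$ are incomparable. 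Likewise $(2)\Rightarrow(4)$ is contrapositive: from incomparable $\lb(a)$, $\lb(b)$ pick $c\in\lb(a)\setminus\lb(b)$ and $d\in\lb(b)\setminus\lb(a)$; then $c\prec a$, $c\not\prec b$, $d\prec b$, $d\not\prec a$, and a short transitivity check (for instance $a\prec b$ would force $c\prec b$, and $c\prec d$ would force $c\prec b$) shows that $a,b,c,d$ are distinct and that no further comparabilities hold, so they induce a copy of $\chain2\amalg\chain2$.

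The substantive step, and the one I expect to be the main obstacle, is $(4)\Rightarrow(1)$: here a purely order-theoretic chain condition must be converted into an explicit geometric representation. List the distinct sets among $\{\lb(x):x\in P\}$ in increasing order as $D_0\subsetneq D_1\subsetneq\cdots\subsetneq D_m$, noting $D_0=\varnothing$ since $P$ has a minimal element. To each $x\in P$ I will assign the interval $[\ell_x,r_x]$, where $\ell_x$ is the index $i$ with $\lb(x)=D_i$, and $r_x=\min\{i: x\in D_i\}$, with $r_x:=m+1$ when $x$ lies in no $D_i$ (equivalently, $x$ is maximal). Two checks remain. First, $\ell_x\le r_x$, so that $[\ell_x,r_x]$ is a genuine interval: since $x\notin\lb(x)=D_{\ell_x}$ and the $D_i$ are nested, every $D_i$ that contains $x$ lies strictly above $D_{\ell_x}$, whence $r_x>\ell_x$. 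Second, the induced interval order equals $P$: because the $D_i$ are increasing and nested, $x\in\lb(y)=D_{\ell_y}$ holds exactly when $\ell_y\ge r_x$, i.e.\ $x\prec y$ iff $r_x\le\ell_y$, which is the defining relation of the interval order on the intervals $[\ell_x,r_x]$.

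With the cycle closed, all four statements are equivalent. (Alternatively, $(3)\Leftrightarrow(4)$ can be seen directly by applying the equivalence $(1)\Leftrightarrow(4)$ to the dual poset $P^*$, using that $\chain2\amalg\chain2$ is self-dual, that interval posets are closed under reversal of the line, and that $\ub_{P^*}(x)=\lb_P(x)$.) The only ingredient that is not routine transitivity bookkeeping is the explicit interval construction in $(4)\Rightarrow(1)$.
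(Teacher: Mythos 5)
Your argument is correct, but note that the paper does not prove this statement at all --- it is quoted verbatim as Theorem~3.2 of Trotter's survey~\cite{Tro}, so there is nothing internal to compare against. Your cycle $(1)\Rightarrow(3)\Rightarrow(2)\Rightarrow(4)\Rightarrow(1)$ checks out, and the only substantive step, the representation in $(4)\Rightarrow(1)$ via the nested chain of down-sets $D_0\subsetneq\cdots\subsetneq D_m$ with $I_x=[\ell_x,r_x]$, is exactly the canonical interval representation used in the standard (Fishburn/Trotter) proof; the verification that $x\prec y$ iff $r_x\le\ell_y$ and that $\ell_x<r_x$ is complete. (One cosmetic slip: in $\ub(x)\supsetneq\ub(y)$ the strictness is witnessed by $y$, not $x$; this is never used in a way that matters.)
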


From here we have the following positive result.

\begin{thm}
  Every interval poset with a unique maximal element is reducible.
\end{thm}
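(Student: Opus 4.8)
The plan is to run the recursion~\eqref{eqn:main_tool} at the top element and to show that every poset system it produces is reducible, the point being that in an interval poset Lemma~\ref{lem:normal_conj} lets one trim an arbitrary antichain down to one meeting the hypotheses of Lemma~\ref{lem:antichain_chain}.

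First I would record the structural facts. A finite poset with a unique maximal element has a greatest element, so the given $m$ is the greatest element of $P$; hence $\lb_P(m)=P\setminus\{m\}$ and $m$ is the unique maximal element lying above any $x\ne m$. I would also invoke Theorem~\ref{thm:interval_equiv}(3)--(4): the families $\{\ub_P(x):x\in P\}$ and $\{\lb_P(x):x\in P\}$ are each totally ordered by inclusion (equivalently, realize $P$ by intervals $[\ell_x,r_x]$ on a line). Applying~\eqref{eqn:main_tool} with this $m$ gives
\[k(P)=\sum_{A\in\ac(\lb(m))}(q-1)^{\abs{A}}\,k(P,m,A),\]
so it suffices to prove that each poset system $S=(P,m,A)$ is reducible.

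Next I would prune the antichain $A$. Whenever $A$ contains distinct $a,b$ with $\lb_P(a)\subseteq\lb_P(b)$ and $\ub_P(a)\supseteq\ub_P(b)$, Lemma~\ref{lem:normal_conj} gives $k(P,m,A)=k(P,m,A\setminus\{b\})$; since $A\setminus\{b\}$ is again an antichain inside $\lb(m)$ I can repeat, and as $\abs{A}$ strictly decreases this terminates at some $A'\subseteq A$ with $k(P,m,A)=k(P,m,A')$ and to which the move no longer applies. The crux is that, because $\{\lb_P(x)\}$ and $\{\ub_P(x)\}$ are chains under inclusion, non-applicability forces a rigid picture: for distinct $a,b\in A'$, comparability of $\lb_P(a)$ and $\lb_P(b)$ lets us assume $\lb_P(a)\subseteq\lb_P(b)$, and then comparability of $\ub_P(a)$ and $\ub_P(b)$ must give $\ub_P(a)\subseteq\ub_P(b)$ (the alternative is exactly the configuration we just excluded), a one-line check ruling out equalities; thus the sets $\lb_P(a)$, $a\in A'$, are pairwise distinct and, ordering $A'=\{a_1,\dots,a_t\}$ so that $\lb_P(a_1)\subsetneq\cdots\subsetneq\lb_P(a_t)$, one gets $\ub_P(a_1)\subsetneq\cdots\subsetneq\ub_P(a_t)$ as well. (Pictorially, $a_1,\dots,a_t$ is a chain of strictly nested intervals.) This is precisely the hypothesis of Lemma~\ref{lem:antichain_chain}, whose remaining requirement --- that $m$ be the unique maximum above $a_1$ --- holds because $m$ is the greatest element of $P$. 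Hence $(P,m,A')$, and therefore $S$, is reducible; substituting back into the display shows $k(P)$ is computed by the recursion, i.e.\ $P$ is reducible.

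The genuinely nonformal step is this middle one: recognizing that Lemma~\ref{lem:normal_conj} is exactly the operation collapsing an arbitrary antichain of an interval poset to a chain of nested intervals --- which is the input Lemma~\ref{lem:antichain_chain} wants --- and that the interval hypothesis (via Theorem~\ref{thm:interval_equiv}) is what makes the pruning halt in that configuration. Everything else --- the base cases $\abs{A}\le 1$, stability of ``antichain in $\lb(m)$'' under deletion, and termination of the pruning --- is bookkeeping.
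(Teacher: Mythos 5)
Your proposal is correct and follows essentially the same route as the paper: reduce via~\eqref{eqn:main_tool} to poset systems $(P,m,A)$, use Lemma~\ref{lem:normal_conj} to delete antichain elements until no pair satisfies its hypotheses, and then use Theorem~\ref{thm:interval_equiv}(3)--(4) to conclude that the surviving $A'$ satisfies the nested-chain hypotheses of Lemma~\ref{lem:antichain_chain}. The paper phrases the pruning as an induction on $\abs{A}$ rather than iteration-until-termination, but that is only a cosmetic difference.
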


\begin{proof}
  From~\eqref{eqn:main_tool}, it suffices to show that every poset system $S=(P,m,A)$ is reducible.
  We do so by induction on $\abs{A}$. If $\abs{A}=0$ then the result follows from Lemma~\ref{lem:remove_max}.
  If $\abs{A}=1$, the result follows from lemmas~\ref{lem:apply_d} and~\ref{lem:remove_max} applied in succession.
  Otherwise, suppose that $\abs{A}\ge 2$.
  If there exist $a,b\in A$ which satisfy the conditions of Lemma~\ref{lem:normal_conj}, then $k(S)=k(P,m,A-\{b\})$, and the inductive hypothesis proves the claim.
  We may therefore assume that for every $a,b\in A$, whenever $\lb(a)\subseteq\lb(b)$ we also have $\ub(b)\nsubseteq\ub(a)$.
  Recall that in an interval poset, the sets $\lb(x)$ are totally ordered by inclusion, so we order the elements of $A=\{a_1,\dots,a_k\}$ such that
  \[\lb(a_1)\subseteq\lb(a_2)\subseteq\cdots\subseteq\lb(a_k).\]
  For each $i<j$, we know that $\ub(a_j)\nsubseteq\ub(a_i)$.
  However, in an interval poset, the sets $\ub(x)$ are also totally ordered by inclusion.
  We conclude that
  \[\ub(a_1)\subseteq\ub(a_2)\subseteq\cdots\subseteq\ub(a_k),\]
  and the result follows from Lemma~\ref{lem:antichain_chain}.
\end{proof}

\medskip

%
%

\section{Embedding}\label{sec:embedding}
\subsection{Embedding sequences}
Consider an attempt to compute $k(U_n)=k(\chain n)$ by recursively applying~\eqref{eqn:main_tool} along with the other tools developed in Section~\ref{sec:combo_tools}.
If a poset system $S$ appears in a computation and is reducible to a poset $P$, we can replace $k(S)$ with $k(P)$, and compute $k(P)$, applying~\eqref{eqn:main_tool} again.
We show that for every poset $P$, one can take $n$ sufficiently large so that $k(P)$ appears in the recursive expansion of $k(U_n)$.
With the following definition, we make this statement precise in Theorem~\ref{thm:chain_univ}.

\begin{defn}
  We say that a poset $P$ \dfn{strongly embeds}\footnote{We define a weaker notion which we call \emph{embedding} later on in Definition~\ref{defn:embed}.} into a poset $Q$ if there exists a sequence of poset systems $S_1,\dots, S_n$ with $S_i=(P_i,m_i,\{a_i\})$, such that
  \begin{enumerate}
    \item $P_0=P$,
    \item $P_n=Q$,
    \item for $0\le i<n$, we have $P_i\cong\D(S_{i+1})-m_{i+1}$.
  \end{enumerate}
  When $P$ strongly embeds into $Q$, we write $P\embeds Q$.
  The sequence \[P=P_0\embeds P_1\embeds\cdots\embeds P_{n-1}\embeds P_n=Q\] is called a \dfn{strong embedding sequence}.
  When we wish to signify that the strong embedding sequence has length~$n$, we write $P\embedsin{n}Q$.
\end{defn}

Note that the anti-chains in each poset system are required to have exactly one element.
Thus, lemmas~\ref{lem:remove_max} and~\ref{lem:apply_d} can be applied, and $k(P_i)=k(S_{i+1})$.
The following observations regarding strong embedding are easy.
\begin{prop}
  Let $P$, $Q$, and $R$ be posets such that $P\embedsin{k} Q$. Then
  \begin{enumerate}
    \item $R+P \embedsin{k} R+Q$, and
    \item $R\amalg P \embedsin{k} R\amalg Q$.
  \end{enumerate}
\end{prop}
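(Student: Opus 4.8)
The plan is to unwind the definitions of the two poset operations and the operator $\D$, showing that each step of a strong embedding sequence for $P\embedsin{k}Q$ can be transported to a strong embedding sequence for $R+P$ (resp.\ $R\amalg P$) of the same length. Concretely, suppose $P=P_0\embeds P_1\embeds\cdots\embeds P_k=Q$ is witnessed by poset systems $S_i=(P_{i-1}',m_i,\{a_i\})$ where $P_{i-1}'$ has underlying poset $P_{i-1}$ (with an appropriate linear extension) and $P_{i-1}\cong\D(S_i)-m_i$. For part~(1), I would define a new sequence of poset systems $T_i=(R+P_{i-1}',m_i,\{a_i\})$, placing $R$ below a suitably relabelled copy of $P_{i-1}'$; for part~(2) I would do the same with $R\amalg P_{i-1}'$. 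The point is that the witnessing data (the marked maximal element $m_i$, the singleton antichain element $a_i$, and the relations destroyed by $\D$) all live inside the $P$-part and are untouched by adjoining $R$ either below it or disjointly.

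The key steps, in order: first, fix linear extensions so that in $R+P$ all labels of $R$ precede all labels of $P$ (legal since $R+Q$ requires every element of $R$ to be below every element of $Q$), and in $R\amalg P$ just concatenate the linear extensions; this makes $U_{R+P}$ and $U_{R\amalg P}$ well-defined pattern groups. Second, observe that $m_i$ remains a maximal element of $R+P_{i-1}$ and of $R\amalg P_{i-1}$: in the lexicographic sum nothing of $R$ lies above anything of $P$, so maximality is preserved, and likewise for the disjoint union. Third, observe that $\{a_i\}$ is still an antichain contained in $\lb(m_i)$ in the enlarged poset — again immediate because adding $R$ below or beside $P$ does not create or destroy relations among elements of $P$, nor does it change $\lb_P(m_i)$ since every element of $R$ that becomes $\prec m_i$ in the lexicographic sum is automatically $\prec a_i$ too (as $R$ sits below all of $P$), so it never lies strictly between $a_i$ and $m_i$. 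Fourth, check that $\D(T_i)-m_i \cong R+(\D(S_i)-m_i)$, respectively $R\amalg(\D(S_i)-m_i)$: by the explicit formula $\rel(\D(T_i)) = \rel(T_i)\setminus\{(a,x): a\prec x\prec m_i,\ |A\cap\lb(x)|=1\}$ with $A=\{a_i\}$, and since any such $x$ must lie strictly between $a_i$ and $m_i$ hence in the $P$-part, the pairs removed are exactly those removed in $\D(S_i)$; deleting $m_i$ then leaves $R$ glued to $\D(S_i)-m_i$ in the same way it was glued to $P_{i-1}$. Chaining these isomorphisms gives $R+P\embedsin{k}R+Q$ and $R\amalg P\embedsin{k}R\amalg Q$.

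The main obstacle is bookkeeping the linear extensions and verifying that the isomorphism $P_{i-1}\cong\D(S_i)-m_i$ lifts coherently through $R+(-)$ and $R\amalg(-)$ — in particular that the relabelling needed to keep the standard ordering as a linear extension of $R+P_{i-1}$ is compatible from one step to the next. This is routine but fiddly: the isomorphism in condition~(3) of the definition of strong embedding is only an isomorphism of abstract posets, and one must invoke the footnote remark that $\U_P$ depends only on the isomorphism class to re-choose linear extensions freely at each stage without affecting $k$. No genuinely hard step is involved; the whole argument is a verification that the operator $\D$, the choice of $m$, and the choice of $a$ are all ``local'' to the $P$-summand and therefore commute with the functorial operations $R+(-)$ and $R\amalg(-)$.
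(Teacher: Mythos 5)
The paper states this proposition without proof (it is labelled an ``easy observation''), and your argument is the intended one: transport each poset system through $R+(-)$ or $R\amalg(-)$, note that the marked maximal element, the singleton antichain, and every relation $(a,x)$ with $a\prec x\prec m$ deleted by $\D$ all live in the $P$-summand, and conclude $\D(T_i)-m_i\cong R+(\D(S_i)-m_i)$, resp.\ $R\amalg(\D(S_i)-m_i)$. Your verification is correct; the only blemish is an indexing slip in the setup --- the poset system $S_i$ must be built on the \emph{larger} poset $P_i$ rather than on $P_{i-1}$, since $P_{i-1}\cong\D(S_i)-m_i$ has one fewer element than the poset underlying $S_i$.
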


The next few lemmas are technical, so we provide an outline of our methods for showing that every poset strongly embeds into a chain.
First, Lemma~\ref{lem:add_top} tells us that if we have a poset $P$ sitting inside a larger poset $P+\chain k$, it is safe to focus just on $P$.
That is, any strong embedding of $P$ into a chain can be transformed into a strong embedding of $P+\chain k$ into an even larger chain.
With this in mind, we may safely assume that $P$ does not have a unique maximum.

Next, Lemma~\ref{lem:max_el} proves that we can take a maximal element $m$ of $P$ and connect it to each of the other elements in $P$.
The result will be a poset which has a chain sitting atop it which can safely be ignored.

Finally, the content of Theorem~\ref{thm:chain_univ} applies Lemma~\ref{lem:max_el} inductively, proving that each poset strongly embeds into a chain.

\begin{lem}\label{lem:add_top}
  Let $P$, $Q$, and $R$ denote posets, and suppose $P\embedsin{k} Q$.
  Then we have \[P+R\embedsin{2k} Q+R+\chain k.\]
\end{lem}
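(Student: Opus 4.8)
The plan is to take a strong embedding sequence
\[
P = P_0 \embeds P_1 \embeds \cdots \embeds P_k = Q
\]
witnessed by poset systems $S_i = (P_{i-1}', m_i, \{a_i\})$ (with $P_{i-1}' \cong \D(S_i) - m_i$ after the appropriate identification), and to lift it in two stages to a strong embedding $P + R \embedsin{2k} Q + R + \chain k$. First I would apply part~(1) of the preceding proposition (on $R + P \embedsin{k} R + Q$, dualized via $k(P) = k(P^*)$, or better, an analogous statement with the summand on top): since lexicographic sum with a fixed poset on the bottom preserves strong embeddings, we immediately get
\[
P + R \;\embedsin{k}\; Q + R,
\]
because adding $R$ \emph{above} $P$ simply carries along unchanged through each step of the sequence — in each poset system $S_i$, the maximal element $m_i$ and the anti-chain element $a_i$ lie strictly below the new top copy of $R$, so $\D$ and the removal of $m_i$ act identically. (If the earlier proposition is only stated for $R$ on the bottom, one invokes the dual via $\D$ commuting with order-reversal, or one simply reproves this one-line fact inline.) This accounts for the first $k$ steps.

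**The second stage: building the chain on top.** Now I need $k$ more steps taking $Q + R$ to $Q + R + \chain k$. The idea is to \emph{reverse} the original embedding sequence one step at a time, but now with an extra isolated-looking chain stacked on top. Concretely, recall that $P_{i-1} \embeds P_i$ means $P_{i-1} \cong \D(S_i) - m_i$ where $S_i = (P_i, m_i, \{a_i\})$; equivalently, $P_i$ is obtained from $P_{i-1}$ by "re-attaching" a maximal element $m_i$ in the appropriate way. I would run this construction starting from $Q + R$: at step $j$ (for $j = 1, \dots, k$), I attach the element $m_{k+1-j}$ but placed \emph{just above all of $Q$ and below nothing else new} — i.e., I thread it into a growing chain $\chain j$ sitting on top of $Q$, above $R$. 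The key point is that the poset system $(\text{current poset}, m_{k+1-j}, \{a_{k+1-j}\})$ still has $\D$ acting exactly as in the original sequence, because the relations $\D$ removes are all of the form $(a, x)$ with $a \prec x \prec m$, and these are entirely internal to the $Q$-part, untouched by the presence of $R$ and the partial chain above.

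**Where the real work is.** The genuine obstacle is bookkeeping the interaction between the two stages and verifying that after the first $k$ steps produce $Q + R$, the element that the second stage wants to re-attach as $m_{k+1-j}$ is still "available" in the right position — one must check that the maximal element $m_i$ re-attached in reverse has the correct upper- and lower-bound sets relative to the $R$ summand and the partial chain, so that the condition $P_{i-1} \cong \D(S_i) - m_i$ of the strong-embedding definition is literally satisfied at each step. I expect this to reduce to the observation that in the lexicographic sum $X + R$, for any $x \in X$ we have $\ub_{X+R}(x) = \ub_X(x) \sqcup R$ and $\lb_{X+R}(x) = \lb_X(x)$, so the $\D$-operator and the test in Lemma~\ref{lem:remove_max} see only the $X$-part; combined with the fact that a chain added on top of a poset with a unique maximum is "removable" in the sense of the earlier outline (Lemma~\ref{lem:add_top} is exactly the tool quoted there), the two halves splice together to give a valid strong embedding sequence of total length $2k$. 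The final poset is $Q + R + \chain k$: the $Q+R$ coming from stage one, and the $\chain k$ being the accumulated tower of re-attached maximal elements from stage two.
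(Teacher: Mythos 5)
The first stage of your argument fails, and it fails at exactly the point the lemma is designed to handle. You claim $P + R\embedsin{k} Q+R$ on the grounds that the maximal element $m_i$ and the anti-chain element $a_i$ ``lie strictly below the new top copy of $R$, so $\D$ and the removal of $m_i$ act identically.'' But a poset system requires its distinguished element to be \emph{maximal}: in the paper's convention the lexicographic sum $P+R$ places $R$ \emph{above} $P$, so once $R$ sits on top, the element $m_i$ used in the original sequence is no longer in $\max(P_i+R)$, and the poset system $(P_i+R, m_i,\{a_i\})$ is not even well-defined. The unnumbered proposition only gives $R+P\embedsin{k}R+Q$ (new summand on the \emph{bottom}), and you cannot dualize your way out: the identity $k(P)=k(P^*)$ is explicitly available only for the weaker notion of embedding, not for strong embedding, which is what this lemma asserts. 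Your second stage has a parallel problem: with $m$ at the top of the partial chain and $a\in Q$, the operator $\D$ removes the relation $(a,r)$ for \emph{every} $r\in R$ (since $a\prec r\prec m$ and the anti-chain is a singleton), so the removed relations are not ``entirely internal to the $Q$-part'' and the intermediate posets are not of the form $Q+R+\chain{j}$.

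The extra $\chain{k}$ and the factor of $2$ exist precisely to repair this, and the repair must be interleaved rather than staged. The paper's proof (induction on $k$, base case $k=1$) expands each original step into \emph{two consecutive} steps: first a poset system $(Q+R+\chain1, m', \{m\})$ whose maximal element $m'$ is a fresh top-of-chain element and whose anti-chain is $\{m\}$ itself --- applying $\D$ and deleting $m'$ strips away exactly the relations from $m$ up into $R$, which makes $m$ maximal in the resulting poset; only then can the original reduction $(\,\cdot\,, m, \{a\})$ legally be performed to recover $P+R$. One chain element is consumed per original step, which is where both the length $2k$ and the summand $\chain{k}$ come from; your decomposition into $k$ ``transport'' steps followed by $k$ ``chain'' steps does not assemble into a valid strong embedding sequence.
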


\begin{proof}
  We proceed by induction on $k$.
  We first show the result for $P\embedsin{1}Q$.
  Let $(Q,m,\{a\})$ be a poset system for which $P\cong \D(Q,m,\{a\})-m$.
  Then \[\rel(P)=\rel(Q-m)\setminus\{(a,x): a\prec_Q x\prec_Q m\}.\]
  We define poset systems $S_1$ and $S_2$ to yield a strong embedding sequence for $P+R\embeds Q+R+\chain1$.
  We work backwards from $Q+R+\chain1$, first defining $S_2$, then defining $S_1$ in terms of $S_2$.

  Let $m'$ denote the unique maximal element in $Q+R+\chain1$, and define
  \begin{align*}
    S_2 &= (Q+R+\chain1, m', \{m\})\text{ and}\\
    S_1 &= (\D(S_2)-m', m, \{a\}).
  \end{align*}
  We aim to show that $\D(S_1)-m\cong P+R$.
  To this end, we begin with $Q+R+\chain1$ and follow backwards through the strong embedding sequence to determine which relations were removed.
  First, for $\D(S_2)-m'$, the relations removed were all the relations of the form $(m,r)$ for $r\in R$.
  It follows that $m$ is maximal in $\D(S_2)-m'$.

  Next $\D(S_1)-m$ removes all of the relations $(a,x)$ where $a\prec_Q x\prec_Q m$.
  The result is that $P+R$ and $\D(S_1)-m$ have precisely the same relations and are therefore isomorphic posets.
  This proves that $P+R\embedsin{2}Q+R+\chain1$, which concludes the base case.

  Assume that for all posets $P$, $Q$, and $R$, whenever $P\embedsin{k} Q$ we have $P+R\embedsin{2k} Q+R+\chain k$.
  Suppose we have posets $P$ and $Q$ for which $P\embedsin{k+1} Q$.
  Write the strong embedding sequence \[P=P_0\embedsin{k} P_k\embedsin{1} Q.\]
  By the inductive hypothesis $P+R\embedsin{2k}P_k+R+\chain{k}$.
  Furthermore, because $P_k\embedsin{1} Q$, the base case shows us that \[P_k+(R+\chain{k})\embedsin{2}Q+(R+\chain{k})+\chain1=Q+R+\chain{k+1}.\]
  Together, we have $P+R\embedsin{2k+2}Q+R+\chain{k+1}$, which completes the induction.
\end{proof}

\begin{lem}\label{lem:max_el}
  Let $P$ be a poset, and let $m\in\max(P)$.
  Then \[P\embedsin{k} (P-m)+\chain{k+1},\]
  where $k=\abs{P}-\abs{\lb_P(m)}-1$.
\end{lem}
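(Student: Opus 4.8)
The plan is to build an explicit strong embedding sequence of length $k$ by stacking a chain of new elements on top of $P-m$ one rung at a time, detaching at each step one of the elements of $P$ that is not below~$m$. Write $L=\lb_P(m)$ and let $I=P\setminus(\{m\}\cup L)$ be the set of elements incomparable to~$m$; since $m$ is maximal, $P$ is partitioned into $\{m\}$, $L$ and $I$, so $|I|=|P|-|L|-1=k$. The preliminary observation driving everything is that no element of $I$ lies below any element of $L$: if $b\in I$ and $b\prec_P z\prec_P m$ then $b\prec_P m$, contradicting $b\in I$; hence $\ub_P(b)\subseteq I$ for every $b\in I$. I then fix a labeling $I=\{b_1,\dots,b_k\}$ which is a linear extension of $P|_I$, so that $b_i\prec_P b_j$ forces $i<j$; combined with the previous observation, this gives $\ub_P(b_\ell)\subseteq\{b_{\ell+1},\dots,b_k\}$ for each~$\ell$.

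Next I would introduce the intermediate posets. For $0\le i\le k$, let $P_i$ be the poset on ground set $(P-m)\cup\{c_1,\dots,c_{i+1}\}$, where $c_1,\dots,c_{i+1}$ are new elements (with $c_1$ identified with $m$ when $i=0$), whose order is generated by the induced order of $P$ on $P-m$, the chain $c_1\prec\cdots\prec c_{i+1}$, the relations $z\prec c_1$ for all $z\in L$, and the relations $b_\ell\prec c_1$ for all $\ell\le i$ (so that $b_{i+1},\dots,b_k$ are incomparable to every~$c_j$). The linear-extension property is exactly what is needed to check that each $P_i$ is a genuine, transitively closed poset; and one reads off directly that $P_0=P$ while $P_k=(P-m)+\chain{k+1}$.

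The core of the argument is the single-step claim: for $0\le i<k$, the triple $S_{i+1}=(P_{i+1},\,c_{i+2},\,\{b_{i+1}\})$ is a poset system ($c_{i+2}$ is maximal in $P_{i+1}$, $b_{i+1}\prec_{P_{i+1}}c_{i+2}$, and a singleton is an anti-chain) with $\D(S_{i+1})-c_{i+2}\cong P_i$. To see this I would compute the set of $x$ with $b_{i+1}\prec_{P_{i+1}}x\prec_{P_{i+1}}c_{i+2}$: the elements strictly below $c_{i+2}$ in $P_{i+1}$ are $c_1,\dots,c_{i+1}$ together with $L$ and $b_1,\dots,b_{i+1}$, and of these the ones lying strictly above $b_{i+1}$ are precisely $c_1,\dots,c_{i+1}$ — no element of $L$ is above $b_{i+1}$ by the preliminary observation, and no $b_\ell$ with $\ell\le i+1$ is above $b_{i+1}$ by the choice of labeling. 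Hence $\D(S_{i+1})$ is $P_{i+1}$ with exactly the relations $(b_{i+1},c_1),\dots,(b_{i+1},c_{i+1})$ deleted, and deleting $c_{i+2}$ as well leaves exactly~$P_i$. Therefore $P=P_0\embeds P_1\embeds\cdots\embeds P_k=(P-m)+\chain{k+1}$ is a strong embedding sequence of length~$k$, proving $P\embedsin{k}(P-m)+\chain{k+1}$ (the case $k=0$ being the trivial length-zero sequence, since then $m$ is above everything and $P=(P-m)+\chain1$).

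The one place I expect to need genuine care — the main obstacle to a clean write-up — is the bookkeeping in the single-step claim: one must verify that $\D$ strips away exactly the relations $(b_{i+1},c_1),\dots,(b_{i+1},c_{i+1})$ and nothing more, so that $\D(S_{i+1})-c_{i+2}$ lands precisely on~$P_i$. It is exactly the two structural facts — that $\ub_P(b)\subseteq I$ for $b\in I$, and the linear-extension labeling — that keep the application of $\D$ from also severing relations among the $b_\ell$, or between $b_{i+1}$ and~$L$; without them the telescoping collapses. Everything else (well-definedness of the $P_i$, the identifications at the two ends, and the transitive closedness of $\D(S_{i+1})$) is routine.
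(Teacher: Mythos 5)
Your proof is correct and is essentially the paper's own argument: both rest on the observations that the elements incomparable to $m$ are never below elements of $\lb_P(m)$ and that a linear-extension ordering of those elements keeps $\D$ from severing relations among them, and both peel off one such element per step while shrinking the chain stacked on $(P-m)$. The only difference is cosmetic — the paper indexes the sequence from the target $(P-m)+\chain{k+1}$ down to $P$ using a reverse linear extension, while you index from $P$ up to the target with a forward one.
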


\begin{proof}
  Let $X=\{x: x\not\preceq_P m\}$, and note that $\abs{X}=k$.
  Order the elements of \mbox{$X=\{x_1,x_2,\dots,x_k\}$} according to some reverse linear extension of $P$, so that if $x_i\preceq_P x_j$, then $i\ge j$.

  Let $Q_0=(P-m)+\chain{\abs{X}+1}$, and label the elements in \[\chain{\abs{X}+1}=\{m<p_k<p_{k-1}<\cdots< p_1\}.\]
  For $1\le i\le k$, define $Q_i$ recursively as $Q_i=\D(Q_{i-1},p_i,\{x_i\})-p_i$.

  The relations removed from $Q_i$ are simple to describe:
  \[\rel(Q_{i-1})\setminus\rel(Q_i)=\{(x_i,p_j): i+1\le j\le k\}\cup\{(x_i,m)\}.\]
  Note that $Q_k$ is a poset which has $p_1,\dots,p_k$ removed.
  Thus, the fact that we removed the relations $\{(x_i,p_j): i+1\le j\le k\}$ from $Q_{i-1}$ to obtain $Q_i$ is not relevant.
  However, we did remove $(x_i,m)$ for each $i$.
  By the definition of $X$, we have the equality $Q_k=P$.
  Thus, we have constructed a strong embedding sequence \[P=Q_k\embeds Q_{k-1}\embeds\cdots\embeds Q_0=(P-m)+\chain{\abs{X}+1},\] which proves the result.
\end{proof}

\begin{thm}\label{thm:chain_univ}
  Every poset strongly embeds into a chain.
  Specifically, $P\embeds \chain{\abs{P}^2-2\abs{\rel(P)}}$.
\end{thm}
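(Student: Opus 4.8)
The plan is to induct on the number $\iota(P)$ of incomparable unordered pairs of elements of~$P$. This is the convenient parameter because $\abs{\rel(P)} = \binom{\abs{P}}{2} - \iota(P)$, so that
\[
  \abs{P}^2 - 2\ts\abs{\rel(P)} \. = \. \abs{P}^2 - \abs{P}\ts\bigl(\abs{P}-1\bigr) + 2\ts\iota(P) \. = \. \abs{P} + 2\ts\iota(P)\ts;
\]
write $\Phi(P)$ for this common value. The base case $\iota(P)=0$ is exactly the case that $P=\chain{\abs{P}}$ is a chain, and there $\chain{\abs{P}}\embedsin{0}\chain{\abs{P}}=\chain{\Phi(P)}$ trivially. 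Two elementary observations will be used repeatedly: deleting a global maximum from a poset leaves $\iota$ unchanged (a global maximum lies above everything, hence is in no incomparable pair), and adjoining a chain on top, $X\mapsto X+\chain s$, also leaves $\iota$ unchanged. We also use that strong embedding is transitive, as embedding sequences concatenate.

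For the inductive step suppose $\iota(P)\ge 1$. First I would strip global maxima off the top of $P$ one at a time until none remains, calling the result $\bar P$, so that $P=\bar P+\chain t$ for some $t\ge 0$. By the first observation $\iota(\bar P)=\iota(P)\ge 1$, so $\bar P\ne\nothing$; and being a finite poset with no global maximum, $\bar P$ has at least two maximal elements. Fix $m\in\max(\bar P)$; since $\bar P$ has another maximal element, which is necessarily incomparable to~$m$, the number $k:=\abs{\bar P}-\abs{\lb_{\bar P}(m)}-1$ of elements of $\bar P$ incomparable to $m$ satisfies $k\ge 1$. Lemma~\ref{lem:max_el} applied to $\bar P$ gives $\bar P\embedsin{k}(\bar P-m)+\chain{k+1}$, and then Lemma~\ref{lem:add_top} with $R=\chain t$ carries the top chain along:
\[
  P \. = \. \bar P+\chain t \. \embedsin{2k} \. \bigl((\bar P-m)+\chain{k+1}\bigr)+\chain t+\chain k \. = \. (\bar P-m)+\chain{2k+t+1} \. =: \. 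P'\ts.
\]

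It then remains to check that $P'$ is closer to the base case and that induction delivers the right chain. Since $m$ is maximal in $\bar P$ it is incomparable to exactly its $k$ non-comparable elements, so $\iota(\bar P-m)=\iota(\bar P)-k$; as the appended chain contributes nothing to $\iota$, we get $\iota(P')=\iota(P)-k<\iota(P)$, and the inductive hypothesis yields $P'\embeds\chain{\Phi(P')}$. Moreover
\[
  \Phi(P') \. = \. \abs{P'}+2\ts\iota(P') \. = \. \bigl(\abs{\bar P}+2k+t\bigr)+2\bigl(\iota(P)-k\bigr) \. = \. \abs{\bar P}+t+2\ts\iota(P) \. = \. \abs{P}+2\ts\iota(P) \. = \. \Phi(P)\ts,
\]
so by transitivity $P\embedsin{2k}P'\embeds\chain{\Phi(P')}=\chain{\Phi(P)}$, which is the claimed embedding $P\embeds\chain{\abs{P}^2-2\abs{\rel(P)}}$. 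The one place that needs care is precisely this final count: the chain $\chain k$ that Lemma~\ref{lem:add_top} appends — an artifact of the length-doubling in that lemma — has length exactly the number $k$ by which $\iota$ drops when $m$ is removed, so it is absorbed with no slack and the target chain comes out to be exactly $\chain{\Phi(P)}$, with no enlargement needed. (If one prefers to avoid Lemma~\ref{lem:add_top} with $R=\nothing$ in the case $t=0$, apply Lemma~\ref{lem:max_el} to $P=\bar P$ directly: then $P'=(P-m)+\chain{k+1}$ has $\Phi(P')=\Phi(P)-k\le\Phi(P)$, and one finishes using $\chain a\embedsin 1\chain{a+1}$, which holds via the poset system $(\chain{a+1},\,m',\,\{a'\})$ with $m'$ the top and $a'$ the element just below it, for which $\D$ removes nothing.)
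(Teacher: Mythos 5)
Your proof is correct and follows essentially the same route as the paper's: the same decomposition $P=\bar P+\chain t$ (the paper's $P_0+\chain\ell$, since $\ub(m)$ for $m$ maximal among the ``bad'' elements is exactly the top chain you strip off), the same one-step reduction via Lemmas~\ref{lem:max_el} and~\ref{lem:add_top}, and an induction on a quantity that decreases for the same reason. Your only real departure is bookkeeping: inducting on the number of incomparable pairs and using the identity $\abs{P}^2-2\abs{\rel(P)}=\abs{P}+2\ts\iota(P)$ makes the final chain-length check cleaner than the paper's direct verification that $\abs{Q}^2-\abs{P}^2=2\abs{\rel(Q)}-2\abs{\rel(P)}$.
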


\begin{proof}
  Let $F(P)$ denote the set of elements which are not comparable to every element in $P$.
  We proceed by induction on $\abs{F(P)}$.
  If $F(P)=\emp$, then $P$ is a chain and the result is trivial.

  Otherwise, let $m\in F(P)$ be maximal amongst elements of $F(P)$.
  As every element of $\ub(m)$ is comparable to every element in $P$, the elements in $\ub(m)$ are totally ordered.
  Thus, we may disect $P$ into \[P=P_0+\chain\ell,\] where $\ell=\abs{\ub(m)}$, and where $m\in\max(P_0)$.

  By Lemma~\ref{lem:max_el}, we know that \[P_0\embedsin{k} (P_0-m)+\chain{k+1},\] where $k=\abs{P_0}-\abs{\lb_P(m)}-1$.
  Applying Lemma~\ref{lem:add_top}, we see that \[P=P_0+\chain\ell\embedsin{2k}(P_0-m)+\chain{2k+\ell+1}.\]
  Let $Q=(P_0-m)+\chain{2k+1+\ell}$.
  Note that $F(Q)=F(P)\setminus\{m\}$, and so by inductive hypothesis,
  \[P\embeds Q\embeds \chain{\abs{Q}^2-2\abs{\rel{Q}}}.\]
  It now suffices to show that $\abs{Q}^2-\abs{P}^2=2\abs{\rel(Q)}-2\abs{\rel(P)}$.
  To this end, note that \mbox{$\abs{Q}=\abs{P}+2k$}, and so $\abs{Q}^2-\abs{P}^2=4k(k+\abs{P})$.

  We now express both $\abs{\rel(Q)}$ and $\abs{\rel(P)}$ in terms of $\abs{\rel(P_0-m)}$ by conditioning each pair of related elements on whether or not each element of the pair is contained in $P_0-m$. We have
  \begin{align*}
    2\abs{\rel(P)} &= 2\abs{\rel(P_0-m)}+2\abs{\lb_P(m)}+2\ell\abs{P_0}+l(l-1),\text{ and}\\
    2\abs{\rel(Q)} &= 2\abs{\rel(P_0-m)}+2(2k+\ell+1)(\abs{P_0}-1)+(2k+\ell+1)(2k+\ell).
  \end{align*}

  Recalling that $\abs{\lb_P(m)}=\abs{P_0}-k-1$ and simplifying, we have \[2\abs{\rel(Q)}-2\abs{\rel(P)} = 4k(\abs{P_0}+\ell+k)=4k(k+\abs{P}),\]
  which completes the proof.
\end{proof}

\subsection{Consequences for $U_n$}\label{subsec:un_consequences}
Recall that Halasi and P\'alfy proved the existence of a poset~$P$
for which $k(P)$ is not a polynomial~\cite{HP}.  We modify
their construction we obtained a 13-element poset $P_\diamond$ shown in
Figure~\ref{fig:non_poly_poset}, such that $k(P_\diamond)$ is not a
polynomial in~$q$ (c.f.~\S\ref{ssec:fin-rems-posets}).
Using Lemma~3.1 of~\cite{HP}, we have computed $k(P_\diamond)$.

\begin{figure}[t]
  \centering
  \begin{tikzpicture}[scale=0.45]
    \draw[fill] (-3,0) circle (.1cm);
    \draw[fill] (-1,0) circle (.1cm);
    \draw[fill] (1,0) circle (.1cm);
    \draw[fill] (3,0) circle (.1cm);

    \draw[fill] (-5,2) circle (.1cm);
    \draw[fill] (-3,2) circle (.1cm);
    \draw[fill] (-1,2) circle (.1cm);
    \draw[fill] (1,2) circle (.1cm);
    \draw[fill] (3,2) circle (.1cm);
    \draw[fill] (5,2) circle (.1cm);

    \draw[fill] (-2,4) circle (.1cm);
    \draw[fill] (0,4) circle (.1cm);
    \draw[fill] (2,4) circle (.1cm);

    \draw (5,2) -- (3,0);
    \draw (5,2) -- (1,0);
    \draw (5,2) -- (2,4);
    \draw (5,2) -- (0,4);

    \draw (-1,2) -- (-1,0);
    \draw (-1,2) -- (1,0);
    \draw (-1,2) -- (-2,4);
    \draw (-1,2) -- (0,4);

    \draw (1,2) -- (-3,0);
    \draw (1,2) -- (1,0);
    \draw (1,2) -- (-2,4);
    \draw (1,2) -- (2,4);

    \draw (-3,2) -- (-3,0);
    \draw (-3,2) -- (-1,0);
    \draw (-3,2) -- (2,4);
    \draw (-3,2) -- (0,4);

    \draw (-5,2) -- (-3,0);
    \draw (-5,2) -- (3,0);
    \draw (-5,2) -- (-2,4);
    \draw (-5,2) -- (0,4);

    \draw (3,2) -- (-1,0);
    \draw (3,2) -- (3,0);
    \draw (3,2) -- (-2,4);
    \draw (3,2) -- (2,4);
  \end{tikzpicture}
  \caption{A 13-element poset~$P_\diamond$, for which $k(P_\diamond)\notin \zz[q]$.}
  \label{fig:non_poly_poset}
\end{figure}
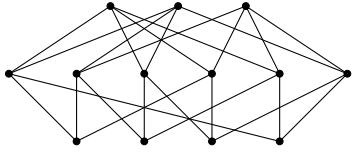


\begin{prop}
  Let $P_\diamond$ denote the poset shown in Figure~\ref{fig:non_poly_poset}. Then
  \begin{align*}
    k(P_0) \, & =  \, 1 + 36 \ts t + 582\ts t^2 + 5628\ts t^3 + 36601\ts t^4 + 170712\ts t^5 + 594892\ts t^6\\
    &\, \hspace{5mm} + 1593937\ts t^7 + 3355488\ts t^8 + 5646608\ts t^9 + 7705410\ts t^{10}\\
    &\, \hspace{5mm} + 8631900\ts t^{11} + 8023776\ts t^{12} + 6248381\ts t^{13} + 4111322\ts t^{14}\\
    &\, \hspace{5mm} + 2302222\ts t^{15} + 1102490\ts t^{16} + 451836\ts t^{17} + 157555\ts t^{18}\\
    &\, \hspace{5mm} + 46042\ts t^{19} + 10971\ts t^{20} + 2040\ts t^{21} + 276\ts t^{22} + 24\ts t^{23} +\ts t^{24}\\
    &\, \hspace{5mm} + \delta(q)\cdot t^{12}(t+2)^6\,,
  \end{align*}
  where \ts $t=q-1$ \ts and
  \[\delta(q)=\begin{cases}
      2 & \text{if $q$ is odd\ts,}\\
      1 & \text{otherwise\ts.}
  \end{cases}\]
\end{prop}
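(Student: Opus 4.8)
The plan is to apply Lemma~3.1 of~\cite{HP}, which turns the count $k(P_\diamond)$ into a finite sum of $q$-powers indexed by the structure of $P_\diamond$, and then to isolate the single summand that fails to be polynomial.

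First I record the relevant structure of $P_\diamond$ (Figure~\ref{fig:non_poly_poset}): it is a three-level poset with bottom antichain $B=\{b_1,\dots,b_4\}$, middle antichain $M=\{m_1,\dots,m_6\}$, and top antichain $T=\{t_1,t_2,t_3\}$; each $m\in M$ covers a distinct $2$-subset of $B$ (so the six middle elements are in bijection with the six edges of $K_4$) and is covered by a $2$-subset of $T$, and the assignment of top-pairs to middle elements identifies the three perfect matchings of $K_4$ with the three $2$-subsets of $T$. In particular the longest chain of $P_\diamond$ has three elements, so $\U_{P_\diamond}$ is $2$-step nilpotent, with $[\U_{P_\diamond},\U_{P_\diamond}]$ spanned by the twelve $e_{b,t}$ with $b\prec t$. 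By Lemma~3.1 of~\cite{HP} (equivalently, by applying~\eqref{eqn:main_tool} with $m\in T$ together with the stabilizer analysis of Lemma~\ref{lem:apply_d}), counting co-adjoint orbits reduces to summing $q^{24-r(\bar f)}$ over the $q^{12}$ central characters $\bar f\in[\U_{P_\diamond},\U_{P_\diamond}]^{*}$, where $r(\bar f)$ is the rank of the alternating form $(X,Y)\mapsto\bar f([X,Y])$ on the $24$-dimensional quotient $\U_{P_\diamond}/[\U_{P_\diamond},\U_{P_\diamond}]$. That form is block off-diagonal (pairing the span of the $e_{b,m}$ against the span of the $e_{m,t}$) and block-diagonal over $M$, so $r(\bar f)=2\sum_{m\in M}\operatorname{rank}(C_m(\bar f))$, where $C_m(\bar f)$ is the $2\times2$ matrix of values of $\bar f$ on the four relations $b\prec t$ with $b\prec m\prec t$. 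Identifying $\bar f$ with an array $\lambda\in M_{4\times3}(\fq)$, this gives the explicit twelve-variable sum
\[ k(P_\diamond)\;=\;\sum_{\lambda\in M_{4\times3}(\fq)}\ \prod_{m\in M} q^{\,4-2\operatorname{rank}(C_m(\lambda))}, \]
in which each $C_m(\lambda)$ is a $2\times2$ submatrix of $\lambda$ on the rows and columns prescribed by the $K_4$-matching structure.

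Second --- and this is where the work is --- I evaluate this sum by stratifying $M_{4\times3}(\fq)$ according to which entries $\lambda_{b,t}$ vanish. On a fixed stratum the six ranks $\operatorname{rank} C_m(\lambda)$ depend only on which of the six minors $\det C_m(\lambda)$ vanish, and for every stratum except the open torus $\lambda\in(\fq^{\times})^{12}$ the number of points with a prescribed vanishing pattern of these minors is a polynomial in $q$; summing those contributions yields the long polynomial displayed in the statement. On the torus stratum --- which already accounts for the factor $t^{12}=(q-1)^{12}$ --- the conditions $\det C_m(\lambda)=0$ are proportionality relations between row-pairs of $\lambda$ that overlap through the three $K_4$-matchings, so counting their simultaneous solutions comes down to counting $\fq$-points of a locus cut out by nondegenerate quadratic conditions; the resulting point count is governed by a square class, and the even--odd discrepancy is exactly the factor $\gcd(2,q-1)=\#\{z\in\fq:z^2=1\}$, i.e.\ $\delta(q)$. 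Collecting the torus-stratum contribution and writing $q=t+1$ produces the single correction $\delta(q)\cdot t^{12}(t+2)^6$, the six factors $t+2=q+1$ reflecting the six middle elements.

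Finally I assemble the pieces. Because $P_\diamond$ has only thirteen elements, the whole computation above is finite, and I carry it out --- and independently verify it --- by computer: enumerate the strata and the minor-vanishing patterns, record each contribution as a polynomial over $\qqq$ and over $\ff_2$, and sum; as a further check, recompute $k(P_\diamond;q)$ for several small prime powers $q$ directly from~\eqref{eqn:main_tool}, or by brute-force orbit enumeration, and confirm agreement with the claimed formula. The main obstacle is the torus-stratum analysis of the second step: one must determine exactly which configurations of the six determinants are jointly realizable over $(\fq^{\times})^{12}$ and check that the gap between even and odd characteristic is precisely $\gcd(2,q-1)$ --- equivalently, that the relevant square-class count has the stated form; the rest is bookkeeping.
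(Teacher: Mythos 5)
Your outline matches the paper's approach: the paper obtains this formula exactly by applying Lemma~3.1 of~\cite{HP} to reduce $k(P_\diamond)$ to a finite sum of $q$-powers over the $q^{12}$ central characters (ranks of the associated alternating forms) and then evaluating that sum computationally, offering no further detail. Your structural setup --- the $K_4$-edge/matching description of $P_\diamond$, the block decomposition of the form over the six middle elements, and the identification of the open-torus stratum encoding $x^2=1$ as the source of $\delta(q)$ --- is correct and consistent with the figure and with the paper's own remark in the final section that $P_\diamond$ encodes the variety $x^2=1$.
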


%

This proposition proves Theorem~\ref{thm:hp-new}. Now, it follows from Theorem~\ref{thm:chain_univ}
that $P_\diamond\embeds\chain{97}$.  However, the strong embedding sequence can be made more
efficient by weakening our definition.

\begin{defn}\label{defn:embed}
  A poset $P$ \dfn{embeds} into a poset $Q$ if there exists a sequence of poset systems $S_1,\dots,S_n$ with $S_i=(P_i,m_i,A_i)$, such that
  \begin{enumerate}
    \item $P_0=P$,
    \item $P_n=Q$, and
    \item for $0\le i < n$, we have $k(P_i)=k(S_{i+1})$.
  \end{enumerate}
  When $P$ embeds into $Q$, we write $P\wkemb Q$.
  The sequence \[P=P_0\wkemb P_1\wkemb\cdots\wkemb P_{n-1}\wkemb P_n=Q\] is called an \dfn{embedding sequence}.
  When we wish to signify that the embedding sequence has length $n$, we write $P\wkembin{n}Q$.
\end{defn}

Note that if $P\embeds Q$, then $P\wkemb Q$ as well.
One of the tools we are able to use with embeddings which was not possible with the stricter notion of strong embeddings is the fact that $k(P^*)=k(P)$.
This is the fact that we will exploit to show that $P_\diamond\wkemb\chain{59}$ in Proposition~\ref{prop:more_efficient}.

\begin{lem}\label{lem:two_chains}
  For nonnegative integers $a$ and $b$, we have $\chain a\amalg\chain b\embeds\chain{2a+b}$
\end{lem}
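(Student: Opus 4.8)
The plan is to exhibit an explicit strong embedding sequence realizing $\chain a\amalg\chain b\embeds\chain{2a+b}$, built by stacking the two chains one above the other while paying a linear ``toll'' in extra chain elements for each relation that must be severed. The key observation is that $\chain a\amalg\chain b$ differs from the chain $\chain{a+b}$ (obtained by putting $\chain a$ below $\chain b$, say) precisely by the $ab$ missing relations between the two blocks; a single application of $\D$ followed by deletion of a maximal element can only sever the relations emanating upward from one fixed element, so we must peel off the elements of $\chain a$ one at a time.

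First I would set up notation: write $\chain a=\{a_1\prec\cdots\prec a_a\}$ and $\chain b=\{b_1\prec\cdots\prec b_b\}$, and target the chain $C=\chain{2a+b}$ realized as the linear order $a_1\prec a_2\prec\cdots\prec a_a\prec p_1\prec\cdots\prec p_{a}\prec b_1\prec\cdots\prec b_b$, where $p_1,\dots,p_a$ are the $a$ auxiliary padding elements (accounting for the $2a+b$ total). I would then define poset systems $S_i=(P_{i-1},m_i,\{a_i\})$ processing $i=a,a-1,\dots,1$ in turn, where at stage $i$ the element $a_i$ is the maximal element being ``unhooked'' from everything strictly above it that lies below a suitable $m_i$; here $m_i$ should be taken to be one of the padding vertices $p_i$ sitting just above the current top of the $a$-block. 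By the description of $\D$, the relations removed when forming $\D(S_i)-m_i$ are exactly $\{(a_i,x): a_i\prec x\prec m_i,\ |\{a_i\}\cap\lb(x)|=1\}$, which — since $a_i$ is the unique element of the singleton antichain — is all of $\{(a_i,x): a_i\prec x\prec m_i\}$. Choosing $m_i$ so that the interval $(a_i,m_i)$ contains precisely the elements $a_{i+1},\dots,a_a$ together with the already-used padding vertices (whose relations are irrelevant since those vertices get deleted), the net effect after running through all $i$ is to delete every relation of the form $(a_i,b_j)$ and every relation $(a_i,a_j)$ with $i<j$ except those we wish to retain inside $\chain a$ — so I must be careful to order the deletions and choose each $m_i$ so that the surviving poset is exactly $\chain a\amalg\chain b$ and not something with $\chain a$ partially disconnected.

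The routine part is then checking that $P_a\cong\chain a\amalg\chain b$ by verifying the relation sets match, and counting: each of the $a$ steps consumes one padding vertex, so we use $a$ extra vertices on top of the $a+b$ original ones, for a total of $2a+b$, matching the claim, and since each antichain is a singleton, Lemmas~\ref{lem:remove_max} and~\ref{lem:apply_d} apply at every step so this is a genuine strong embedding sequence. The main obstacle I anticipate is bookkeeping: ensuring that severing the upward relations of $a_i$ within the chosen interval $(a_i,m_i)$ removes \emph{all} of $\{(a_i,a_j): j>i\}\cup\{(a_i,b_j): \text{all }j\}$ while the relations among $a_1,\dots,a_i$ and among the $b_j$ are left untouched — in particular making sure no relation $a_i\prec p_\ell$ that we want gone (or want kept) is mishandled because $p_\ell$ is deleted at a later or earlier stage. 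A clean way to finesse this is to process the $a_i$ from top down and at stage $i$ choose $m_i$ to be a \emph{fresh} padding vertex placed immediately above $a_a$'s current position, so that at the moment of deletion the interval $(a_i,m_i)$ is exactly $\{a_{i+1},\dots,a_a\}$ together with padding vertices already slated for removal; this keeps each step's removed-relation set transparent and localizes all the verification to a single inductive claim on the shape of $P_i$.
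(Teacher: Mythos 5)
Your overall plan---spend one extra vertex for each element of $\chain a$ and sever that element's upward relations via a singleton-antichain poset system, for a total of $2a+b$ vertices---is the right idea and matches the paper's count. But the construction as you specify it does not work, for a concrete reason tied to the definitions. In a strong embedding sequence each $S_i=(P_i,m_i,\{a_i\})$ must be a genuine poset system, so $m_i\in\max(P_i)$; and $\D(S_i)-m_i$ removes only relations $(a_i,x)$ with $a_i\prec x\prec m_i$, together with the vertex $m_i$ itself. You place the padding $p_1,\dots,p_a$ \emph{between} the two blocks, below $b_1\prec\cdots\prec b_b$. Then no $p_i$ is ever maximal while the $b_j$ are present---and the $b_j$ must be present at every stage, since they survive into the final poset---so the systems $(P_{i-1},p_i,\{a_i\})$ are ill-formed. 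Even setting that aside, with $m_i$ below the $b$-block the relations $(a_i,b_j)$ satisfy $b_j\not\prec m_i$ and hence can never be removed by $\D$; and the interval $(a_i,m_i)$ you describe contains $a_{i+1},\dots,a_a$, so $\D$ would sever precisely the internal relations of $\chain a$ that must be preserved, while leaving intact the cross relations that must be destroyed. In short, the mechanism deletes the wrong relations and cannot delete the right ones.

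The construction can be repaired by putting the padding at the \emph{top} of the chain, above the $b$-block, and processing $a_a,a_{a-1},\dots,a_1$ in that order with $m$ equal to the current global maximum: at the step for $a_i$, the interval $(a_i,m)$ consists of the $b_j$ and the unused padding but not $a_{i+1},\dots,a_a$ (their upward relations out of the $a$-block were severed at earlier steps), so exactly the unwanted relations are removed. The paper packages this more cleanly as an induction on $a$: set $P=\chain1+(\chain{a-1}\amalg\chain{b+1})$ with $\hat0$ the global minimum and $m$ the top of the $(b+1)$-chain; then $\D(P,m,\{\hat0\})-m\cong\chain a\amalg\chain b$, while the inductive hypothesis $\chain{a-1}\amalg\chain{b+1}\embeds\chain{2a+b-1}$ together with the compatibility of $\embeds$ with lexicographic sums gives $P\embeds\chain{2a+b}$. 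You should either adopt that induction or rewrite your direct construction with the padding on top and the corrected description of the intervals.
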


\begin{proof}
  We proceed by induction on $a$.
  When $a=0$, the result is trivial.
  Otherwise, let $P=\chain1+(\chain{a-1}\amalg\chain{b+1})$, let $m$ be the maximal element in $\chain{b+1}$, and let $\hat0$ be the unique minimal element of $P$.
  By inductive hypothesis, we know that $\chain{a-1}\amalg\chain{b+1}\embeds\chain{2a+b-1}$, and so $P\embeds\chain{2a+b}$.
  Note that $\D(P,m,\{\hat0\})-m$ is isomorphic to $\chain a\amalg \chain b$, so \[\chain a\amalg\chain b\embeds P \embeds\chain{2a+b},\] proving the result.
\end{proof}

\begin{prop}\label{prop:more_efficient}
  Let $P_\diamond$ denote the poset shown in Figure~\ref{fig:non_poly_poset}. Then $P_\diamond\wkemb\chain{59}$.
\end{prop}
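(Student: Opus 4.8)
The plan is to exhibit an explicit embedding sequence
\[
P_\diamond \;=\; P_0 \wkemb P_1 \wkemb \cdots \wkemb P_n \;=\; \chain{59}.
\]
Theorem~\ref{thm:chain_univ} already gives the strong embedding $P_\diamond \embeds \chain{97}$, since $|P_\diamond|^2 - 2|\rel(P_\diamond)| = 13^2 - 72 = 97$; the task is to shave the extra $38$ off by exploiting the slack in the weaker notion of embedding. Two ingredients supply this. First, Lemma~\ref{lem:two_chains} collapses a disjoint union $\chain a \amalg \chain b$ into $\chain{2a+b}$, which is far shorter than the chain one would obtain by feeding $\chain a \amalg \chain b$ into Theorem~\ref{thm:chain_univ} directly (that route yields $\chain{2ab+a+b}$). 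Second — the genuinely new tool — since $k(R)=k(R^*)$ for every poset $R$ (Proposition~\ref{prop:obs}(1)), any intermediate poset $R=P_i$ occurring in an embedding sequence may be replaced by its dual $R^*$ at no cost: the only condition linking consecutive terms, $k(P_i)=k(S_{i+1})$, is untouched by the swap. Concretely, whenever the recursion reaches a poset $R$ we are free to continue the construction from whichever of $R$ and $R^*$ admits the cheaper completion to a chain, and since $P_\diamond$ is not self-dual (its rank sizes are $4,6,3$ while the dual's are $3,6,4$) we may also pick either $P_\diamond$ or $P_\diamond^*$ as the starting point.

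The construction then runs the same induction as the proof of Theorem~\ref{thm:chain_univ} — repeatedly lift a maximal element of the current poset to the top of a long chain via Lemma~\ref{lem:max_el}, carrying the rest along with Lemma~\ref{lem:add_top} — but with two modifications. At each stage, before lifting a maximal element $m$, we replace the current poset by its dual whenever this shrinks $k = |P| - |\lb(m)| - 1$, i.e.\ whenever $m$ has more elements below it in the dual orientation; this is where most of the savings accrue. And once the poset has been whittled down to a disjoint union of chains — the deletions performed by $\D$ along the way being precisely what disconnect it — we abandon the generic bound and coalesce the pieces by repeated applications of Lemma~\ref{lem:two_chains}, ordering the merges so as to keep $2a+b$ small at each step. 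Summing the chain lengths contributed by all the stages should then give $\chain{59}$ on the nose; a substantial part of the write-up will be the bookkeeping that tracks this total and, at every step, verifies the side conditions needed to invoke Lemmas~\ref{lem:remove_max}, \ref{lem:normal_conj}, \ref{lem:antichain_chain}, and~\ref{lem:apply_d}.

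The main obstacle is optimisation rather than anything conceptual: the final chain length depends sensitively on the order in which the thirteen elements of $P_\diamond$ are processed, on where the duality flips are inserted, and on the order in which the residual chains are merged, and there is no structural reason forcing the minimum to be exactly $59$. I expect the witnessing sequence to be located by a small search over processing orders (by hand, or with modest computer assistance), after which the proof merely records the winning sequence and checks the arithmetic $13 \to \cdots \to 59$. A secondary point demanding care is the legitimacy of the duality splice: it is valid only at an \emph{intermediate} poset of an embedding subsequence of positive length, so one must confirm that every flip actually used is of exactly this form and is not applied to the endpoints $P_\diamond$ or $\chain{59}$ in a way the definition does not support.
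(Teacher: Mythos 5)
Your proposal correctly identifies the two ingredients the paper actually uses — the dualization trick $k(P)=k(P^*)$, which is only available for the weak notion of embedding, and Lemma~\ref{lem:two_chains} for coalescing disjoint unions of chains — but it stops at the level of a strategy and never produces the witnessing sequence. That sequence \emph{is} the content of the proposition: the statement asserts the specific bound $59$, and you concede yourself that ``there is no structural reason forcing the minimum to be exactly $59$'' and that the sequence would have to be ``located by a small search'' that you have not carried out. As written, the argument establishes only that $P_\diamond$ embeds into \emph{some} chain shorter than $\chain{97}$, not into $\chain{59}$. That is a genuine gap, not a presentational one.

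For comparison, the paper's proof is an explicit construction exploiting the three-level structure of $P_\diamond$ (4 minimal, 6 middle, 3 maximal elements). One first attaches each of the three maximal elements to every non-maximal element (only two missing relations per maximal element), then dualizes and does the same for the four originally minimal elements (three missing relations each). This lands on the concrete intermediate poset
\[
P'=(\chain3\amalg\chain3\amalg\chain3)+\anti6+(\chain4\amalg\chain4\amalg\chain4\amalg\chain4),
\]
so the added material forms \emph{incomparable} short chains rather than one ever-growing chain on top, which is where most of the savings over the generic $\chain{97}$ bound come from — more than from your proposed per-step duality flips inside the Theorem~\ref{thm:chain_univ} induction. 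Lemma~\ref{lem:two_chains} (plus one dualization) then collapses the two disjoint unions to $\chain{28}$ and $\chain{15}$, the middle layer satisfies $\anti6\wkembin{5}\chain{11}$, and Lemma~\ref{lem:add_top} assembles $\chain{28}+\anti6+\chain{15}\wkembin{10}\chain{59}$. To repair your write-up you would need to supply an intermediate poset and arithmetic of exactly this kind; the surrounding framing (why duality splices are legitimate, why weak embedding beats strong embedding here) is fine and matches the paper.
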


\begin{proof}
  We use the techniques in the proof of Lemma~\ref{lem:max_el} to attach each of the maximal elements to each of the non-maximal elements.
  Most of these relations are already present. For each maximal element, we must only add two relations for each maximal element.
  Next, we dualize and apply the same process to the newly maximal elements (the elements which were minimal in $P_\diamond$).
  For each of these, we must only add three relations per maximal element. The resulting poset, shown in Figure~\ref{fig:p0_smart_embed}.
  Symbolically, this poset can be described as \[P'=(\chain3\amalg\chain3\amalg\chain3)+\anti6+(\chain4\amalg\chain4\amalg\chain4\amalg\chain4).\]

  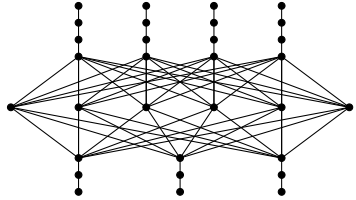
\begin{figure}[h]
    \centering
    \begin{tikzpicture}[scale=0.45]
      \draw[fill] (0,4) circle (0.1cm);
      \draw[fill] (2,4) circle (0.1cm);
      \draw[fill] (4,4) circle (0.1cm);
      \draw[fill] (6,4) circle (0.1cm);
      \draw[fill] (0,4.5) circle (0.1cm);
      \draw[fill] (2,4.5) circle (0.1cm);
      \draw[fill] (4,4.5) circle (0.1cm);
      \draw[fill] (6,4.5) circle (0.1cm);
      \draw[fill] (0,5) circle (0.1cm);
      \draw[fill] (2,5) circle (0.1cm);
      \draw[fill] (4,5) circle (0.1cm);
      \draw[fill] (6,5) circle (0.1cm);
      \draw[fill] (0,5.5) circle (0.1cm);
      \draw[fill] (2,5.5) circle (0.1cm);
      \draw[fill] (4,5.5) circle (0.1cm);
      \draw[fill] (6,5.5) circle (0.1cm);
      \draw[fill] (-2,2.5) circle (0.1cm);
      \draw[fill] (0,2.5) circle (0.1cm);
      \draw[fill] (2,2.5) circle (0.1cm);
      \draw[fill] (4,2.5) circle (0.1cm);
      \draw[fill] (6,2.5) circle (0.1cm);
      \draw[fill] (8,2.5) circle (0.1cm);
      \draw[fill] (0,0) circle (0.1cm);
      \draw[fill] (3,0) circle (0.1cm);
      \draw[fill] (6,0) circle (0.1cm);
      \draw[fill] (0,0.5) circle (0.1cm);
      \draw[fill] (3,0.5) circle (0.1cm);
      \draw[fill] (6,0.5) circle (0.1cm);
      \draw[fill] (0,1) circle (0.1cm);
      \draw[fill] (3,1) circle (0.1cm);
      \draw[fill] (6,1) circle (0.1cm);

      \draw (0,0) -- (0, 5.5);
      \draw (3,0) -- (3, 1);
      \draw (6,0) -- (6, 5.5);
      \draw (2,2.5) -- (2, 5.5);
      \draw (4,2.5) -- (4, 5.5);
      \draw (-2,2.5) -- (0,4);
      \draw (-2,2.5) -- (2,4);
      \draw (-2,2.5) -- (4,4);
      \draw (-2,2.5) -- (6,4);
      \draw (0,2.5) -- (0,4);
      \draw (0,2.5) -- (2,4);
      \draw (0,2.5) -- (4,4);
      \draw (0,2.5) -- (6,4);
      \draw (2,2.5) -- (0,4);
      \draw (2,2.5) -- (2,4);
      \draw (2,2.5) -- (4,4);
      \draw (2,2.5) -- (6,4);
      \draw (4,2.5) -- (0,4);
      \draw (4,2.5) -- (2,4);
      \draw (4,2.5) -- (4,4);
      \draw (4,2.5) -- (6,4);
      \draw (6,2.5) -- (0,4);
      \draw (6,2.5) -- (2,4);
      \draw (6,2.5) -- (4,4);
      \draw (6,2.5) -- (6,4);
      \draw (8,2.5) -- (0,4);
      \draw (8,2.5) -- (2,4);
      \draw (8,2.5) -- (4,4);
      \draw (8,2.5) -- (6,4);

      \draw (-2,2.5) -- (0,1);
      \draw (-2,2.5) -- (3,1);
      \draw (-2,2.5) -- (6,1);
      \draw (0,2.5) -- (0,1);
      \draw (0,2.5) -- (3,1);
      \draw (0,2.5) -- (6,1);
      \draw (2,2.5) -- (0,1);
      \draw (2,2.5) -- (3,1);
      \draw (2,2.5) -- (6,1);
      \draw (4,2.5) -- (0,1);
      \draw (4,2.5) -- (3,1);
      \draw (4,2.5) -- (6,1);
      \draw (6,2.5) -- (0,1);
      \draw (6,2.5) -- (3,1);
      \draw (6,2.5) -- (6,1);
      \draw (8,2.5) -- (0,1);
      \draw (8,2.5) -- (3,1);
      \draw (8,2.5) -- (6,1);

    \end{tikzpicture}
    \caption{A poset $P'$ used as an intermediate step in an embedding sequence for the poset $P_\diamond$ shown in Figure~\ref{fig:non_poly_poset}.}
    \label{fig:p0_smart_embed}
  \end{figure}

  Using Lemma~\ref{lem:two_chains} and dualizing, we obtain that \[P'\wkemb\chain{28}+\anti6+\chain{15}.\]
  Finally, because $\anti6\wkembin{5}\chain{11}$, we know that $\chain{28}+\anti{6}\wkembin{5}\chain{39}$. Applying Lemma~\ref{lem:add_top} yields
  \[P_\diamond\wkemb P'\wkemb\chain{28}+\anti6+\chain{15}\wkembin{10}\chain{59},\] which completes the proof.
\end{proof}

\begin{rmk}\label{rmk:heuristic}
  As a consequence of the preceeding result, one can express $k(U_{59})$
  as a $\zz[q]$-linear combination of terms of the form $k(P)$ and $k(S)$
  for posets $P$ and poset systems $S$ such that one of these terms is~$k(P_\diamond)$.
  It seems implausible that the remaining terms would contribute in such a way
  as to cancel out the contribution of $k(P_\diamond)$, and render $k(U_{59})$ a polynomial,
  thus our Conjecture~\ref{conj:false-59}.
  Unfortunately explicit computation of $k(U_{59})$ is well beyond the capabilities
  of any modern computer, and likely to remain so in the forseeable future.
  (c.f.~\S\ref{ssec:fin-rems-time}).
\end{rmk}

\bigskip

\section{Algorithm and Experimental results}\label{sec:experimental}
\subsection{Algorithm}
Given a poset $P$, to test whether or not $k(P)$ is a polynomial in $q$, we apply the following recursive algorithm.
Pick a maximal element $m$ of $P$ and iterate through all poset systems of the form $S=(P,m,A)$.
If we can apply the equivalences given in lemmas~\ref{lem:remove_max},~\ref{lem:normal_conj}, and~\ref{lem:apply_d} to obtain a reduction of $S$ to some poset $Q$, then recursively compute $k(Q)$.
If there is even one poset system $S$ which cannot be reduced via these methods, we try another maximal element.
If we exhaust all maximal elements in this way, we try the same procedure on the dual poset $P^*$.
If this also fails, we fall back on a slower approach to compute the values $k(S)$ which the algorithm otherwise failed to compute.
This slower approach is a modification of the algorithm discussed in~\cite{VA1,VA2,VA3}.
We call this modification the \dfn{VLA-algorithm}, and give a brief description of the necessary adaptations.

Order the cells in $\low_P$ from bottom to top, and reading each row left to right.
This is the ordering induced from \[(n,1)<(n,2)<\cdots<(n,n-1)<(n-1,1)<\cdots<(3,1)<(3,2)<(2,1).\]
The computation starts at the least cell in the ordering, and iterates through the cells recursively, branching when necessary.
When the algorithm reaches a cell, it attempts to conjugate the cell to zero while fixing all previously seen cells.
If this is possible, the cell is called \dfn{inert}, it sets the cell to zero, and continues on to the next cell in the ordering.
If the cell is not inert, it is called a \dfn{ramification cell}.
The algorithm will branch into two cases: \ts
one where the cell contains a zero, and one where the cell does not.

It often happens that some cells will be inert or ramification cells depending on some algebraic conditions on the previously visited cells.
For example, it may be the case that cell $(5,2)$ will be a ramification cell if $X_{5,1}=X_{6,2}$, and inert otherwise, i.e.~where $X_{i,j}$ denotes the value in cell~$(i,j)$.
In such instances, the algorithm will branch into three different cases:
\begin{enumerate}
  \item the condition to be inert holds, and the cell is set to zero,
  \item the condition to be inert fails (so the cell is a ramification cell), but the cell happens to be zero anyway,
  \item the condition to be inert fails (so the cell is a ramification cell), and the cell is non-zero.
\end{enumerate}
Determining how often the algebraic conditions hold is done with several simple techniques which take care of the vast majority of the algebraic varieties which show up in practice.

To apply the VLA-algorithm to a poset system $S=(P,m,A)$, rather than starting at the beginning of the ordering, we start with some seeded data.
Specifically, we start with the value~$1$ in each cell $(m,a)$, where $a\in A$, and the value~$0$ in each cell $(m,x)$ for $x\not\in A$.

In the two subsequent boxed figures, we provide pseudocode for our algorithm (excluding the VLA-algorithm).
For further details, we refer the reader to our \ts \verb!C++! \ts source code, which is available at \ts
\url{http://www.math.ucla.edu/~asoffer/content/pgcc.zip})\ts.

\begin{figure}[h]
  \begin{flushleft}
    \begin{framed}

      \noindent\textbf{Input:} A poset $P$.\\
      \textbf{Output:} The function $k(P)$.
      \hrule
      \begin{verbatim}

function compute_poset( P ):
  output = 0
  let m in max(P)

  for each A in antichains(P) below m:
    Q = compute_poset_system(P, m, A)
    if Q is not "FAILURE":
      output = output + (q-1)^size(A) * compute_poset(Q)
    else:
      if have not tried some max m':
        restart with m'
      else if have not tried P*:
        compute_poset( P* )
      else:
        output = output + VLA_algorithm(P, m, A)
  return output
      \end{verbatim}
    \end{framed}
  \end{flushleft}
\end{figure}

\begin{figure}[h]
  \begin{flushleft}
    \begin{framed}
      \noindent\textbf{Input:} A poset system $(P, m, A)$.\\
      \textbf{Output:} A poset $Q$ with $k(Q)=k(P,m,A)$, or ``FAILURE'' if none can be found.
      \hrule
      \begin{verbatim}

  function compute_poset_system( P, m, A ):
    while P is changing:
      P = D(P, m, A)
      for a,b in A:
        if above(a) contains above(b) and
           below(b) contains below(a):
          A = A - b

    if no element below m and above member of A:
      return P
    else:
      return "FAILURE"
      \end{verbatim}
    \end{framed}
  \end{flushleft}
\end{figure}

\subsection{Small posets}
Gann and Proctor maintain a list of posets with 9 or fewer elements on their website~\cite{CP}.
We use their lists of connected posets in our verification.
Without using the VLA-algorithm, our code verifies that $k(P)\in\zz[q]$ for every poset $P$ with 7 or fewer elements.
Furthermore, using the VLA-algorithm when necessary as described above, our code verifies that $k(P)\in\zz[q]$ for every poset $P$ with 9 or fewer elements.
Moreover, for each such poset $P$, we have $k(P)\in\nn[q-1]$.
This proves Theorem~\ref{thm:hp-small}.
A text file containing all posets on 9 or fewer elements along with their associated polynomials is available at \url{http://www.math.ucla.edu/~asoffer/kunq/posets.txt}.

\subsection{Chains}
Our code computes $k(U_n)$ for every $n\le 11$ without needing to employ the VLA-algorithm. For $12\le n\le 16$, our code verifies the polynomiality modulo the computation of several ``exceptional poset systems'' which are tackled with the VLA-algorithm.
This verifies the results of Arregi and Vera-L\'opez in~\cite{VA3}, and extends their results to the computation
to all~$n\le 16$.

As $n$ grows, the number of exceptional poset systems which require the use of the VLA-algorithm grows quickly, as shown in Figure~\ref{fig:exceptional_posets} below.\footnote{Computations made with an Intel\textsuperscript{\textregistered}Xeon\textsuperscript{\textregistered} CPU X5650 2.67GHz and 50Gb of RAM.}
The polynomials $k(U_n)$, for $n \le 16$ are given in the Appendix~\ref{sec:app}
and prove Theorem~\ref{thm:n16}.
\begin{figure}[h]
  \centering
  \begin{tabular}{rrrl}
    $n$ & Exceptional poset systems & Computation time (sec.)\\\hline
    $\le11$ & 0 & $\le 0.2$\\
    12 & 1 & 0.5\\
    13 & 8 & 4.4\\
    14 & 64 & 120.7 & ($\sim 2$ minutes)\\
    15 & 485 & 4456 & ($\sim 1.2$ hours)\\
    16 & 3550 & 164557 & ($\sim46$ hours)
  \end{tabular}
  \caption{Computation time and number of exceptional poset
  systems.}
  \label{fig:exceptional_posets}
\end{figure}

\bigskip

\section{Final Remarks}\label{sec:fin-rems}

\subsection{}
Our approach is motivated by the philosophy of Kirillov's orbit method (see~\cite{K1}).
In the case of $U_n(\rr)$, the orbit method provides a correspondence between the
\mbox{irreducible} unitary representations of $U_n(\rr)$ and the co-adjoint orbits.
Moreover, the co-adjoint orbits enjoy the structure of a symplectic manifold.
The unitary characters can actually be recovered by integrating a particular form against
the corresponding orbit.

Over finite fields, a manifold structure is not possible, but some of the philosophy of
the orbit method seems to still be relevant and some formulas translate without
difficulty.  For example, the number of conjugacy classes (and therefore irreducible
repreresentations), is equal to the number of co-adjoint orbits (Lemma~\ref{lem:adcoad}).
However, the naturally analogous character formula does not hold~\cite{IK}.

Note also that the proof of Lemma~\ref{lem:adcoad} makes little use of the
structure of pattern groups. In fact, the theorem holds for any \dfn{algebra group},
defined in~\cite{Isa}.  The proof is in fact an extension of the proof
for~$U_n(\fq)$ defined in~\cite{Isa} (c.f.~\cite{DI}).

\subsection{}
In~\cite{Isa}, Isaacs introduced pattern groups and explained that one can
count characters in $U_n(q)$ by counting characters in stabilizers of a certain
group action (see also~\cite{DT}).
These stabilizers are themselves pattern groups, and lend themselves to a similar recursion,
but over characters, rather than coadjoint orbits.  There is more than superficial difference
between the recursion in~\cite{Isa} and in this paper.  In fact, it follows from~\cite{IK},
that the characters cannot correspond to coadjoint orbits via the natural analogue of
Kirillov's orbit method.

\subsection{}\label{fin_rems_va}
Higman originally stated Conjecture~\ref{conj:higman} in the form of an open problem~\cite{H1};
it received the name ``Higman's Conjecture" more recently.  Higman originally checked
that the conjecture holds for $n\le 5$.  The calculation of the number of conjugacy
classes was later extended to $n\le 8$ by Gudivok et al.\/ in~\cite{G+}
(they made a mistake for $n=9$). The authors use a variation on the brute force algorithm.

Later, Arregi and Vera-L\'opez verified Higman's conjecture for $n\le 13$ in~\cite{VA3}
by a clever application of a brute force algorithm for counting adjoint orbits.
They also proved that the number of conjugacy classes of cardinality $q^s$ is polynomial
for $s\le n-3$ \cite{VA2}.  Moreover, they verified that, as a polynomial in $(q-1)$,
the number of conjugacy classes of cardinality $q^s$ has non-negative integral coefficients
(for $s\le n-3$).  For other partial results Higman's conjecture see also~\cite{ABT,Isa,Mar}.
We refer to~\cite{Sof-thesis} for a broad survey of the literature on the conjugacy classes
of $U_n(q)$, both the algebraic and combinatorial aspects.

\subsection{}
In recent years, much effort has been made to improve Higman's upper bounds
for the asymptotics of $k(U_n(q))$, as $n\to \infty$, see~\cite{Mar,Sof,VA0}.
For a fixed~$q$, it is conjectured that
\begin{equation}
  k(U_n(q)) \, = \, q^{\frac{n^2}{12}\ts (1+o(1))} \quad \text{as} \ \. n\to \infty\ts.\tag{$\lozenge$}
\end{equation}
The lower bound is known and due to Higman in the original paper~\cite{H1}, while the
best upper bound is due to the second author~\cite{Sof}~:

\[q^{\frac{n^2}{12}\ts (1+o(1))} \le k(U_n(q)) \, \le \,
q^{\frac{7}{44}\ts n^2\ts (1+o(1))} \quad \text{as} \ \. n\to \infty\ts.\]

The above asymptotics have curious connection to this work.  Arregi and Vera-L{\'o}pez
conjectured in~\cite{VA3} a refinement of Higman's Conjecture~\ref{conj:higman}
stating that the degree of the polynomials $k(U_n)$ are
equal to  $\floor{n(n+6)/12}$.  
If true, this would confirm the asymptotics~$(\lozenge)$ as well.
While we do not believe Higman's conjecture, the degree formula continues to hold for new values,
so it is now known for all $n\le 16$.

\subsection{}\label{ssec:fin-rems-posets}
In~\cite{HP}, Halasi and P\'alfy exhibit a pattern group for which the number of conjugacy
classes is not a polynomial in the size of the field.
Though they do not provide explicit bounds, their construction yields a \ts 5,592,412-element poset.
We obtained the 13-element poset $P_\diamond$ shown in Figure~\ref{fig:non_poly_poset} by modifying
their construction.

It would be interesting to see if the poset~$P_0$ is in fact the smallest posets with a non-polynomial
$k\bigl(U_{P_0}\bigr)$.  By Theorem~\ref{thm:hp-small}, such posets would have to have at least~10 elements.
Unfortunately, even this computation might be difficult since the total number of connected posets is
rather large. For example, there are about $1.06 \cdot 10^9$ connected posets on 12 elements,
see e.g.~\cite{BM} and \cite[A000608]{OEIS}.

\subsection{}\label{ssec:fin-rems-parallel}
When our algorithm falls back on the VLA-algorithm, the poset systems it must compute
have minimal shared computational resources.  For this reason, our technique lends itself
well towards parallelization.  This, along with several optimization techniques we believe
could be used to compute $k(U_{17}(q))$ and $k(U_{18}(q))$.  However, due to the super-exponential
growth rate of $k(U_n(q))$, pushing the computation significantly further will likely
require different techniques.

\subsection{}\label{ssec:fin-rems-time}
Based on our computations, one can try to give a conservative lower bound to the cost of computing $k(U_{59}(q))$. 
Assuming the current rate of increase in timing, we estimate our algorithm to need about $10^{66}$ years of CPU time. 
Alternatively, if we assume Moore's~law\footnote{Moore's law is the observation that the number of transistors per square inch on an integrated circuit has been doubling roughly every 18 months. Quite roughly, this can be interpretted as computer performance increase.} will continue to hold indefinitely, this computation will not become feasible until the year~2343.

\subsection{}\label{ssec:fin-rems-59}
There are two directions in which the bound $n\ge 59$ in Conjecture~\ref{conj:false-59} can be
decreased.  First, it is perhaps possible that $P_\diamond$ embeds into a smaller chain.  This is
a purely combinatorial problem which perhaps also lends to computational solution.
We would be interested to see if such improvement is possible.

Second, it is conceivable and perhaps likely that there are posets $P$ with
more than~13 elements which embed into $C_n$ with $n<59$, and have non-polynomial $k(U_P)$.
Since $P_\diamond$ really encodes the variety $x^2=1$, it would be natural to consider other
algebraic varieties which have different point counts depending on teh characteristic.
This is a large project which goes beyond the scope of this work.

\subsection{} \label{ssec:fin-kirillov}
In~\cite{K2,K3}, Kirillov made two conjectures on the values of
$k(U_n(q))$ for small~$q$.

\begin{conj}[Kirillov] \label{conj:kirillov-seq}
For all $n\ge 1$, we have \ts $k\bigl(U_n(2)\bigr) \ge \ra_{n+1}$ \ts and \ts $k\bigl(U_n(3)\bigr) \ge \rb_{n+1}$,
where $\{\ra_n\}$ is the Euler sequence and $\{\rb_n\}$ is the Springer sequence.
\end{conj}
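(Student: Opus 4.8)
The plan is to treat the conjecture in two regimes. For $n\le 16$ the two inequalities will follow at once from Theorem~\ref{thm:n16}: with the explicit polynomials $k(U_n)\in\nn[q-1]$ in hand, one evaluates them at $q=2$ and at $q=3$ and compares the resulting integers with the Euler zigzag numbers $\{\ra_n\}$ (whose exponential generating function is $\sec x+\tan x$) and the Springer numbers $\{\rb_n\}$ (whose exponential generating function is $1/(\cos x-\sin x)$). In this range the inequalities hold --- in fact with equality for the smallest values of $n$ --- which settles the conjecture as far as the computation reaches; this is the proof we actually present.

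For general $n$ the approach I would pursue is to squeeze a recursive \emph{lower} bound for $k(U_n(q))$ out of the recursion~\eqref{eqn:main_tool} and play it against the recursions satisfied by $\{\ra_n\}$ and $\{\rb_n\}$. Applying \eqref{eqn:main_tool} to the chain $\chain n$ at its top element $m=n$, and using that $\lb_{\chain n}(n)=\chain{n-1}$ contributes only the empty antichain and the $n-1$ singletons, Lemmas~\ref{lem:remove_max} and~\ref{lem:apply_d} yield the identity
\[
k(U_n(q)) = k(U_{n-1}(q)) + (q-1)\sum_{i=1}^{n-1} k\bigl(\chain{i-1}+(\chain1\amalg\chain{n-1-i})\bigr).
\]
Every summand is a positive integer, so for $q\in\{2,3\}$ this gives the cruder recursive bound $k(U_n(q))\ge k(U_{n-1}(q))+(q-1)\sum_{j=0}^{n-2}k(U_j(q))\,k(U_{n-2-j}(q))$ as soon as one establishes the monotonicity $k(P+R)\ge k(P)\,k(R)$ for the lexicographic sum. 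From there I would induct on $n$, comparing $\sum_n k(U_n(q))\,x^n$ with the two generating functions above; since the asymptotics~$(\lozenge)$ eventually dwarf the $n!$-type growth of $\{\ra_n\}$ and $\{\rb_n\}$, the delicate region is only a finite window of $n$ just beyond the computed values.

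The hard part is twofold. First, $k(P+R)\ge k(P)\,k(R)$ is not a formality: passing from $U_P\times U_R$ to $U_{P+R}$ adjoins the abelian normal ``rectangular block'' $V=\langle \E_{x,y}(\alpha):x\in P,\,y\in R\rangle$, with $U_{P+R}/V\cong U_P\times U_R$, and enlarging a group can both fuse and split conjugacy classes, so a genuine orbit count is required --- for instance, exhibiting a $V$-transversal inside $\low_{P+R}$ on which $U_P\times U_R$ still has at least $k(P)\,k(R)$ orbits. Second, and more seriously, the bound just described is a \emph{plain} convolution, whereas $\{\ra_n\}$ and $\{\rb_n\}$ obey \emph{binomially weighted} (boustrophedon-type) recursions that grow faster, so the estimate coming out of \eqref{eqn:main_tool} in this form is by itself too weak to carry the induction through the intermediate window. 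Closing that gap would require either a substantially sharper lower estimate for $k\bigl(\chain a+(\chain1\amalg\chain b)\bigr)$, or a direct combinatorial model: an explicit, recursion-compatible injection from alternating permutations of $[n+1]$ into the conjugacy classes of $U_n(2)$, and from type-$B$ ``snakes'' into those of $U_n(3)$. Constructing such injections canonically and verifying that they are well defined is, I expect, the principal obstacle, and that part we leave open.
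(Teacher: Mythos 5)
The statement you are asked to prove is labeled a \emph{conjecture} in the paper (Conjecture~\ref{conj:kirillov-seq}), and the paper does not prove it: in \S\ref{ssec:fin-kirillov} the authors only (i) verify it for $n\le 16$ by evaluating the polynomials of Theorem~\ref{thm:n16} at $q=2$ and $q=3$ and comparing with the tables in Appendix~\ref{sec:app-kirillov}, and (ii) cite \cite{Sof-thesis} for the fact that Higman's lower bound $k(U_n(q))\ge q^{n^2/12(1+o(1))}$ beats the $n!$-type growth of $\{\ra_n\}$ and $\{\rb_n\}$ once $n\ge 43$ (resp.\ $n\ge 30$), leaving an intermediate window open. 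Your first paragraph reproduces exactly the paper's contribution (i), and it is correct as far as it goes; your closing remark that the asymptotics $(\lozenge)$ eventually dominate is essentially the paper's point (ii), though to make it rigorous you would need an effective, not merely asymptotic, lower bound (Higman's explicit bound, as in the cited thesis), since $(\lozenge)$ itself is conjectural in its upper half and only the lower bound is known.

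The rest of your proposal is a research program, not a proof, and you candidly say so. Two specific cautions. First, your recursion
\[
k(U_n(q)) = k(U_{n-1}(q)) + (q-1)\sum_{i=1}^{n-1} k\bigl(\chain{i-1}+(\chain1\amalg\chain{n-1-i})\bigr)
\]
is a correct instance of \eqref{eqn:main_tool} together with Lemmas~\ref{lem:remove_max} and~\ref{lem:apply_d} (compare Figure~\ref{fig:D_example1}), but the passage from it to a convolution bound requires the supermultiplicativity $k(P+R)\ge k(P)\,k(R)$, which you correctly flag as unproved; and even granting it, you correctly observe the resulting plain convolution cannot keep pace with the binomially weighted boustrophedon recursions for $\ra_n$ and $\rb_n$, so the induction cannot close. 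Second, the proposed injections from alternating permutations and type-$B$ snakes into conjugacy classes are precisely the missing content of Kirillov's conjecture and are not constructed. So the proposal establishes the conjecture only for $n\le 16$ --- the same range as the paper --- and the statement remains open for the intermediate values of $n$.
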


Here the \emph{Euler sequence} $\{\ra_n\}$ counts the number of \emph{alternating
permutations} $\si\in S_n$; it has elegant generating function and asymptotics:
\[\sum_{n=0}^\infty \. \ra_n \. \frac{x^n}{n!} \, = \, \sec(x) + \tan(x)\., \qquad
\ra_n \. \sim \. \frac{4}{\pi} \. \left(\frac{2}{\pi}\right)^n n!,\]
see~\cite[A000111]{OEIS}.  Similarly, the \emph{Springer sequence} $\{\rb_n\}$
counts the number of \emph{alternating signed permutations} in the hyperoctahedral group $C_n$;
it has elegant generating function and asymptotics:
\[\sum_{n=0}^\infty \. \rb_n \. \frac{x^n}{n!} \, = \, \frac{1}{\cos(x) - \sin(x)}\., \qquad
\rb_n \. \sim \. \frac{2\sqrt{2}}{\pi} \. \left(\frac{4}{\pi}\right)^n n!,\]
see~\cite[A001586]{OEIS}.

Kirillov observed that there is a remarkable connection between the sequences
(see Appendix~\ref{sec:app-kirillov}), and made further conjectures related to them.
It is easy to see that asymptotics imply the conjecture for large~$n$.  By using
exact values of $\{\ra_n\}$ and $\{\rb_n\}$, and technical improvements on the
lower bounds by Higman, is easy to show the bounds in the conjecture hold for
$n\ge 43$ and $n\ge 30$, respectively~\cite{Sof-thesis}.  Our results confirm the
conjecture for $n\le 16$, leaving it open only for the intermediate values in both cases.

\subsection{}\label{ssec:fin-rems-alperin}
Recall that by the Halasi--P\'alfy theorem, the functions $k(\UP(q))$ be as bad as any algebraic variety~\cite{HP}.
Theorem~\ref{thm:chain_univ} suggests that $k(U_n(q))$ is also this bad.
This would be in line with other universality results in algebra and geometry,
see e.g.~\cite{BB,Mnev,Vak}.

In a different direction, Alperin showed that the action of $U_n$ by conjugation
on $\GL_n$ does have polynomial behavior~\cite{Alp}.
Specifically, he showed \[\abs{\GL_n/U_n}\in\zz[q]\] for all $n>0$.
Moreover, because $U_n$ acts by conjugation on each cell of the
Bruhat decomposition of $\GL_n$, we have
\[
\abs{\GL_n/U_n}\, = \, \sum_{w\in S_n}\. \abs{B_nwB_n/U_n}\ts.
\]
The term in the summation corresponding to the identity element of $S_n$ is $\abs{B_n/U_n}$,
which bears resemblance to $k(U_n)$.
Complementary to our heuristic in Remark~\ref{rmk:heuristic},
Alperin noted that it seems unlikely that the summation on the right-hand
side has even one non-polynomial term, given that the left-hand side is a polynomial.

For another similar phenomenon, let us mention that there are many moduli spaces
which satisfy \emph{Murphy's law}, a version of Mn\"ev's Universality Theorem~\cite{Vak}.
Over~$\fq$, these moduli spaces have a non-polynomial number of points.
But of course, when summed over all possible configurations these functions
of~$q$ add up to a polynomial, the size of the Grassmannian or other flag varieties.

To reconcile these examples with our main approach, think of them as different examples
of counting points on orbifolds.  Apparently, both the Grassmannian and Alperin's actions
are \emph{nice}, while conjugation on $U_n(\fq)$ is not.  This is not very surprising.
For example, both binomial coefficients~$\binom{n}{k}$ and the number of integer
partitions $p(n)$ count the orbits of certain combinatorial actions
(see the \emph{twelvefold way}~\cite{Sta}).  However, while the former
are ``nice'' indeed, the latter are notoriously complicated.  Despite a large body
of work on partitions, from Euler to modern times, little is known about
divisibility of~$p(n)$; for example, the \emph{Erd\H{o}s conjecture} that
every prime~$s$ is a divisor of some $p(n)$ remains wide open
(see e.g.~\cite{AO}).\footnote{Naturally, one would assume that
asymptotically, we have \ts $s\ts|\ts p(n)$ \ts for a positive
fraction of~$n$.  This is known for some primes~$s$, such as $5, 7$
and~$11$ due to \emph{Ramanujan's congruences}, but is open for $2$
and~$3$, see e.g.~\cite{AO}. }
This suggests that certain numbers of orbits are so wild, that
even proving that they are wild is a great challenge.

\subsection{}
There is little hope of finding interesting classes of posets for which $k(U_P)$ is always a polynomial.
If such a class $\cal P$ contains posets of arbitrary height, then one can show that all chains embed in some member $\cal P$.
As embedding is a transitive property, the family $\cal P$ shares the same universality properties that $\{\chain n\}$ has.
Even the posets of height no more than three can be as bad as arbitrary algebraic varieties \cite{HP}.
Of course, if $P$ is a poset of height two, then $U_P$ is abelian and therefore $k(U_P)$ is a polynomial in $q$, however this family is not interesting.

\subsection{}
Pattern groups and pattern algebras are closely related to incidence algebras of posets.
In fact, the incidence algebra $I(P)$ of a poset $P$ contain, up to isomorphism,
both $\UP$ and $\U_P$ as substructures. This perhaps provides a more natural setting
for the group $\UP$, as it is independent of any total ordering we assign to the elements
of~$P$.  For more information on incidence algebras, see~\cite{Rota,Sta}.

\subsection{}
Many group-theoretic constructions have combinatorial interperetations when
applied to pattern groups.  For instance, the intersection of two pattern groups
$\UP$ and $U_Q$ is the pattern group defined on the poset $P\cap Q$.  For a less
trivial example, the commutator subgroup of $\UP$ is a pattern group $U_{\hspace{-0.6mm}P'}$,
where $P'$ is the subposet consisting of the non-covering relations in $P$.
Normalizers (in $U_n$) are also pattern groups, and can be expressed combinatorially.
However, for normalizers, the precise injection of $U_P$ into $U_n$ is relevant.
As such, the normalizer depends not only on the poset $P$ but also its linear extension.

\vskip.4cm

\noindent
{\bf Acknowledgements.}  We are very grateful to a number of people
for many interesting conversations and helpful remarks: \ts
Karim Adiprasito, Persi Diaconis, Scott Garrabrant, Robert Guralnick, Martin Isaacs,
Alexandre Kirillov, Eric Marberg, Brendan McKay, Alejandro Morales, Peter M. Neumann,
Greta Panova, Rapha\"el Rouquier, and Antonio Vera-L\'opez.
The first author was partially supported by the~NSF.

\vskip.7cm


\appendix
\section{Polynomials $k\bigl(U_n(q)\bigr)$, $q=t+1$} \label{sec:app}
{\footnotesize
\begin{align*}
  k(U_{1}) =\ & 1\\
  k(U_{2}) =\ & 1 + \ts t\\
  k(U_{3}) =\ & 1 + 3\ts t + \ts t^2\\
  k(U_{4}) =\ & 1 + 6\ts t + 7\ts t^2 + 2\ts t^3\\
  k(U_{5}) =\ & 1 + 10\ts t + 25\ts t^2 + 20\ts t^3 + 5\ts t^4 \\
  k(U_{6}) =\ & 1 + 15\ts t + 65\ts t^2 + 105\ts t^3 + 70\ts t^4 + 18\ts t^5 + \ts t^6\\
  k(U_{7}) =\ & 1 + 21\ts t + 140\ts t^2 + 385\ts t^3 + 490\ts t^4 + 301\ts t^5 + 84\ts t^6 + 8\ts t^7\\
  k(U_{8}) =\ & 1 + 28\ts t + 266\ts t^2 + 1120\ts t^3 + 2345\ts t^4 + 2604\ts t^5 + 1568\ts t^6 + 496\ts t^7 + 74\ts t^8 + 4\ts t^9\\
  k(U_{9}) =\ & 1 + 36\ts t + 462\ts t^2 + 2772\ts t^3 + 8715\ts t^4 + 15372\ts t^5 + 15862\ts t^6 + 9720\ts t^7 + 3489\ts t^8\\
  & + 701\ts t^9 + 72\ts t^{10} + 3\ts t^{11}\\
  k(U_{10}) =\ & 1 + 45\ts t + 750\ts t^2 + 6090\ts t^3 + 26985\ts t^4 + 69825\ts t^5 + 110530\ts t^6 + 110280\ts t^7\\
  & + 70320\ts t^8 + 28640\ts t^9 + 7362\ts t^{10} + 1170\ts t^{11} + 110\ts t^{12} + 5\ts t^{13}\\
  k(U_{11}) =\ & 1 + 55\ts t + 1155\ts t^2 + 12210\ts t^3 + 72765\ts t^4 + 261261\ts t^5 + 592207\ts t^6 + 877030\ts t^7\\
  & + 868725\ts t^8 + 583550\ts t^9 + 267542\ts t^{10} + 83909\ts t^{11} + 18007\ts t^{12} + 2618\ts t^{13}\\
  & + 242\ts t^{14} + 11\ts t^{15}\\
  k(U_{12}) =\ & 1 + 66\ts t + 1705\ts t^2 + 22770\ts t^3 + 176055\ts t^4 + 841302\ts t^5 + 2600983\ts t^6 + 5387646\ts t^7\\
  & + 7680310\ts t^8 + 7684820\ts t^9 + 5473050\ts t^{10} + 2803182\ts t^{11} + 1042181\ts t^{12} + 284109\ts t^{13}\\
  & + 57256\ts t^{14} + 8484\ts t^{15} + 890\ts t^{16} + 60\ts t^{17} + 2\ts t^{18}\\
  k(U_{13}) =\ & 1 + 78\ts t + 2431\ts t^2 + 40040\ts t^3 + 390390\ts t^4 + 2403258\ts t^5 + 9766471\ts t^6 + 27116232\ts t^7 \\
  & + 52873678\ts t^8 + 74012653\ts t^9 + 75670881\ts t^{10} + 57294120\ts t^{11} + 32515314\ts t^{12}\\
  & + 14000495\ts t^{13} + 4635125\ts t^{14} + 1195116\ts t^{15} + 241436\ts t^{16} + 37778\ts t^{17} + 4381\ts t^{18} \\
  & + 338\ts t^{19} + 13\ts t^{20}\\
  k(U_{14}) =\ & 1 + 91\ts t + 3367\ts t^2 + 67067\ts t^3 + 805805\ts t^4 + 6225219\ts t^5 + 32296264\ts t^6 + 116332645\ts t^7\\
  & +  298956658\ts t^8 + 560602042\ts t^9 + 781499719\ts t^{10} + 822549728\ts t^{11} + 662497381\ts t^{12}\\
  & + 413509705\ts t^{13} + 202666910\ts t^{14} + 79124292\ts t^{15} + 24968979\ts t^{16} + 6441876\ts t^{17}\\
  &  + 1362732\ts t^{18} + 233758\ts t^{19} + 31542\ts t^{20} + 3159\ts t^{21} + 210\ts t^{22} + 7\ts t^{23}\\
  k(U_{15}) =\ & 1 + 105\ts t + 4550\ts t^2 + 107835\ts t^3 + 1566565\ts t^4 + 14864850\ts t^5 + 96136040\ts t^6 + 437680815\ts t^7\\
  & + 1440259535\ts t^8 + 3502779995\ts t^9 + 6416611201\ts t^{10} + 8998108665\ts t^{11} + 9796436195\ts t^{12}\\
  & + 8387410675\ts t^{13} + 5718426690\ts t^{14} + 3145744973\ts t^{15} + 1416179446\ts t^{16} + 529371274\ts t^{17}\\
  & + 166405370\ts t^{18} + 44325415\ts t^{19} + 9997955\ts t^{20} + 1887955\ts t^{21} + 291345\ts t^{22} + 35270\ts t^{23}\\
  & + 3130\ts t^{24} + 180\ts t^{25} + 5\ts t^{26}\\
  k(U_{16}) =\ & 1 + 120\ts t + 6020\ts t^2 + 167440\ts t^3 + 2894710\ts t^4 + 33137104\ts t^5 + 261929668\ts t^6\\
  & + 1475199440\ts t^7 + 6072906125\ts t^8 + 18674026800\ts t^9 + 43703418616\ts t^{10}\\
  & + 79124540872\ts t^{11} + 112420822696\ts t^{12} + 126975887444\ts t^{13} + 115398765556\ts t^{14}\\
  & + 85415064915\ts t^{15} + 52146190588\ts t^{16} + 26615252562\ts t^{17} + 11515549082\ts t^{18}\\
  & + 4278222573\ts t^{19} + 1378103758\ts t^{20} + 386616800\ts t^{21} + 94259304\ts t^{22} + 19784488\ts t^{23}\\
  & + 3513854\ts t^{24} + 514128\ts t^{25} + 59504\ts t^{26} + 5104\ts t^{27} + 288\ts t^{28} + 8\ts t^{29}
\end{align*}
}

{\normalsize

\section{Known values for the Kirilov sequences} \label{sec:app-kirillov}

\noindent
Here we present a table with Kirillov sequences 
$\bigl\{k(U_{n}(2))\bigr\}$ and $\bigl\{k(U_{n}(3))\bigr\}$ 
discussed in~$\S$\ref{ssec:fin-kirillov}. New values computed 
in this paper are shown in red.  Note that the sequences coincide
in the beginning and the ratios increase to 
$k(U_{16}(2))/\ra_{17} \approx 3.2$,
and $k(U_{16}(3))/\rb_{16} \approx 23.6$, suggesting 
that both parts of Kirillov's Conjecture~\ref{conj:kirillov-seq}
are likely to be true.  

\vskip.5cm

\

\begin{tabular}{lr}
\begin{tabular}{c|r|r}
  $n$ & $\ra_{n+1}$ & $k(U_{n}(2))$\\\hline
  1 & 1 & 1\\
  2 & 2 & 2\\
  3 & 5 & 5\\
  4 & 16 & 16\\
  5 & 61 & 61\\
  6 & 272 & 275\\
  7 & 1385 & 1430\\
  8 & 7936 & 8506 \\
  9 & 50521 & 57205\\
  10 & 353792 & 432113\\
  11 & 2702765 & 3641288\\
  12 & 22368256& 34064872\\
  13 & 199360981 & 352200229\\
  14 & 1903757312 & {\color{red} 4010179157}\\
  15 & 19391512145 & {\color{red} 50124636035}\\
  16 & 209865342976 & {\color{red} 685996839568}
\end{tabular}
& \qquad 
\begin{tabular}{c|r|r}
  $n$ & $\rb_n$ & $k(U_n(3))$\\\hline
  1 & 1 & 1\\
  2 & 3 & 3\\
  3 & 11 & 11\\
  4 & 57 & 57\\
  5 & 361 & 361\\
  6 & 2763 & 2891\\
  7 & 24611 & 27555\\
  8 & 250737 & 315761\\
  9 & 2873041 & 4246737\\
  10 & 36581523 & 66999699\\
  11 & 512343611 & 1226296635\\
  12 & 7828053417 & 26011112361\\
  13 & 129570724921 & 635526804025\\
14 & 2309644635483 & {\color{red} 17881012846299}\\
  15 & 44110959165011 & {\color{red} 577907517043923}\\
  16 & 898621108880097 & {\color{red} 21474199259637473}
\end{tabular}
\end{tabular}

}
\end{document}